\let\hash\#
\let\defn\textbf
\DeclareMathOperator\diam{diam}
\let\Dia\Diamond
\newcommand{\relmiddle}[1]{\nonscript\;\middle#1\nonscript\;}
\let\Up\relax
\DeclareMathOperator\Up{Up}
\DeclareMathOperator\Filt{Filt}
\DeclareMathOperator\Idl{Idl}
\DeclareMathOperator\Coidl{Coidl}
\DeclareMathOperator\PFilt{PFilt}
\numberwithin{equation}{section}
\begin{document}

\title{Notes on quasi-Polish spaces}
\author{Ruiyuan Chen}
\date{}
\maketitle

\begin{abstract}
Quasi-Polish spaces were introduced by de~Brecht as a possibly non-Hausdorff generalization of Polish spaces sharing many of their descriptive set-theoretic properties.  We give a self-contained exposition of the basic theory of quasi-Polish spaces, based on their ``logical'' characterization as $\*\Pi^0_2$ subspaces of countable powers of Sierpinski space, with several new proofs emphasizing this point of view as well as making more extensive use of Baire category techniques.
\renewcommand\thefootnote{}
\footnote{2020 \emph{Mathematics Subject Classification}: Primary 03E15, 54H05; Secondary 54E52.}
\footnote{\emph{Key words and phrases}: quasi-Polish space, Baire category, powerspace, posite.}
\end{abstract}

\section{Introduction}
\label{sec:intro}

Polish spaces, i.e., separable, completely metrizable topological spaces, are the central setting for classical descriptive set theory.
\defn{Quasi-Polish spaces} are, informally, a certain well-behaved generalization of Polish spaces not required to obey any separation axioms beyond $T_0$.
Quasi-Polish spaces were introduced by de~Brecht \cite{deB}, who showed that they satisfy analogs of many of the basic descriptive set-theoretic properties of Polish spaces.
Quasi-Polish spaces also admit some natural constructions with no good analogs for Polish spaces (e.g., the lower powerspace of closed sets; see \cref{sec:lowpow}); thus, quasi-Polish spaces can be useful to consider even when one is initially interested only in the Polish context.

In \cite{deB}, quasi-Polish spaces are defined as second-countable, completely quasi-metrizable spaces, where a \defn{quasi-metric} is a generalization of a metric that is not required to obey the symmetry axiom $d(x, y) = d(y, x)$.
This is a natural generalization of the usual definition of Polish spaces as second-countable, completely metrizable spaces.
It is then proved that

\begin{theorem}[{\cite[Theorem~24]{deB}}]
\label{thm:qpol-qmet}
Quasi-Polish spaces are precisely the homeomorphic copies of $\*\Pi^0_2$ subsets of $\#S^\#N$.
\end{theorem}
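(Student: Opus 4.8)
The plan is to separate the equivalence into two implications: \textbf{(a)} every $\*\Pi^0_2$ subset of a quasi-Polish space — in particular of $\#S^\#N$ — is quasi-Polish in the subspace topology, and \textbf{(b)} every quasi-Polish space admits a topological embedding into $\#S^\#N$ with $\*\Pi^0_2$ image. Granting these, the theorem is immediate: a $\*\Pi^0_2$ subset of $\#S^\#N$ is quasi-Polish by (a), and conversely a quasi-Polish space is homeomorphic to a $\*\Pi^0_2$ subset of $\#S^\#N$ by (b). (If ``a subspace of a quasi-Polish space is quasi-Polish iff it is $\*\Pi^0_2$'' and the universal embedding are already available, the theorem is their conjunction; I sketch proofs of both below.)

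For (a): the Sierpiński space $\#S$ carries the complete quasi-metric with $d(1,0)=1$ and $d(x,y)=0$ otherwise, and a countable product $\prod_n(X_n,d_n)$ of complete quasi-metric spaces with each $d_n\le 1$ is again complete and quasi-metrizable via $d\bigl((x_n)_n,(y_n)_n\bigr)=\sum_n 2^{-n}d_n(x_n,y_n)$, which induces the product topology; since a countable product of second-countable spaces is second countable, $\#S^\#N$ is quasi-Polish. It then remains to show a $\*\Pi^0_2$ subset $A$ of an arbitrary quasi-Polish $X$ is quasi-Polish. Fixing a complete quasi-metric $d\le 1$ on $X$ and writing $x\in A\iff\forall n\,(x\in U_n\Rightarrow x\in V_n)$ with $U_n,V_n\subseteq X$ open, I would equip $A$ with a new quasi-metric
\[
\rho(x,y)=d(x,y)+\sum_n 2^{-n}\,\delta_n(x,y),
\]
where $\delta_n$ is a bounded quasi-pseudometric built from the functions $z\mapsto d(z,X\setminus U_n)$ and $z\mapsto d(z,X\setminus V_n)$, designed so that the ``depth to which a point lies inside $U_n$ (resp.\ $V_n$)'' is tracked along $\rho$-Cauchy sequences. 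A $\rho$-Cauchy sequence in $A$ is $d$-Cauchy, hence $d$-converges to some $x\in X$; the $\delta_n$-terms force $x\in V_n$ whenever $x\in U_n$ — if the sequence eventually lies a definite distance inside $U_n$ it must also lie a definite distance inside $V_n$ — so $x\in A$ and $\rho(x_k,x)\to 0$. One checks that $\rho$ induces the subspace topology on $A$: it is finer than $d$, but no finer than the subspace topology, since the superlevel sets of the auxiliary functions are open in $X$ and contained in the relevant $U_n$ or $V_n$.

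For (b): since $X$ is second countable and $T_0$, any countable basis $\{B_n\}_n$ (which may be taken to consist of open $d$-balls) yields a topological embedding $e\colon X\to\#S^\#N$, $e(x)=\bigl(n\mapsto\llbracket x\in B_n\rrbracket\bigr)$. A point $\alpha$ then lies in $e(X)$ exactly when $F_\alpha=\{n:\alpha_n=1\}$ is a ``rounded Cauchy filter'' for a fixed formal refinement relation on the $B_n$ — for $d$-balls $B(z,q)$ one may take ``$B(z',q')$ refines $B(z,q)$'' to mean $d(z,z')+q'<q$ — namely $F_\alpha$ is upward closed, is directed with respect to formal refinement, and for every $\epsilon>0$ contains some ball of radius $<\epsilon$. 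Each clause is a countable conjunction of implications between basic (cl)open conditions on $\alpha$, so the conjunction is $\*\Pi^0_2$; and the fact that every such $F_\alpha$ really does arise from a (unique) point of $X$ is exactly the completeness of $d$, since the formal refinements in $F_\alpha$ extract a $d$-Cauchy sequence whose limit is that point. The bookkeeping genuinely uses the asymmetry of $d$ — the precise forms of ``refines'' and ``rounded Cauchy filter'' must match de~Brecht's notion of completeness — but it is routine.

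The main obstacle is the quasi-metric construction in (a): it is the quasi-Polish analogue of Lavrentiev's theorem that a $G_\delta$ subspace of a completely metrizable space is completely metrizable, but two genuinely new features intrude. The asymmetry of $d$ means the ``distance to a closed set'' functions are only lower semicontinuous rather than continuous, so the estimates needed to keep $\rho$-Cauchy limits inside $A$ are more delicate; and $\*\Pi^0_2$ is strictly larger than $G_\delta$ in a non-metrizable setting (it contains, e.g., all closed sets), so each $\delta_n$ must control the reciprocal-distance data on \emph{both} sides of its implication $U_n\Rightarrow V_n$. Everything else — that $\#S^\#N$ and countable products are quasi-Polish, and the $\*\Pi^0_2$ bookkeeping in (b) — is routine once the definitions are unwound.
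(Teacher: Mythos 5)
You should first be aware that this paper contains \emph{no proof} of this statement: it is quoted from de~Brecht and then explicitly adopted as the \emph{definition} of a quasi-Polish space (``we take \cref{thm:qpol-qmet} as a definition; in fact, we will not mention quasi-metrics at all''). Under the paper's conventions the theorem is a tautology, and the two halves of your decomposition survive only in degenerate form: your (a) reduces to \cref{thm:qpol-pi02}, whose proof is the single word ``Obvious'', and your (b) is the definition itself. What you have actually outlined is the equivalence of the paper's definition with de~Brecht's original one (second-countable plus completely quasi-metrizable), i.e., the content of \cite[Theorems~19--24]{deB}, which the paper deliberately treats as a black box. That is a legitimate reading of the statement, and your architecture matches de~Brecht's: product quasi-metrics to handle $\#S^\#N$, a Lavrentiev-type re-metrization for $\*\Pi^0_2$ subspaces, and an embedding via a countable basis whose image is cut out by a rounded-Cauchy-filter condition. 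The last of these is genuinely close to the paper's Proof~2 of \cref{thm:pol-qpol}, which carries out exactly that bookkeeping for ordinary metrics.

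Judged as a proof of the quasi-metric equivalence, however, your write-up stops at precisely the two places where the real work lies, and you should not label them routine. First, the quasi-metric $\rho$ in (a) is only ``designed so that'' the limit lands in $A$: in de~Brecht's setting completeness means Cauchy sequences converge in the symmetrized metric $\hat d(x,y)=\max(d(x,y),d(y,x))$, the functions $z\mapsto d(z,X\setminus U_n)$ are not continuous, and one must verify both that $\rho$ is complete in this specific sense and that it induces the subspace topology; this is the bulk of \cite[Theorem~21]{deB} and cannot be waved through. Second, in (b) the claim that every rounded Cauchy filter is realized by a point ``is exactly the completeness of $d$'' again hides the asymmetry: one must check that the formal refinements yield a sequence that is Cauchy in the order-sensitive sense and that its limit lies in every ball named by the filter (compare the fourth condition $[V]_{1/n}\subseteq U$ in Proof~2 of \cref{thm:pol-qpol}, which exists precisely to make this step go through). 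So: right skeleton, correct identification of the difficulties, but the difficulties are acknowledged rather than resolved.
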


Here $\#S = \{0, 1\}$ is the \defn{Sierpinski space}, with $\{1\}$ open but not closed, and can be thought of as the topological space with a ``generic'' open set (namely $\{1\}$).  Similarly, the product $\#S^\#N$ can be thought of as the space with countably many ``generic'' open sets (the subbasic ones).  In non-metrizable spaces such as $\#S^\#N$, $G_\delta$ sets are not so well-behaved since they may not include all closed sets; thus it is convenient to alter the classical definition of $\*\Pi^0_2$ to mean all sets of the form
\begin{align*}
\bigcap_n (\neg U_n \cup V_n) = \{x \mid \forall n\, (x \in U_n \implies x \in V_n)\}
\end{align*}
for countably many open sets $U_n, V_n$.  Note that the above set can be read as ``the set of all $x$ where the implications $U_n \Rightarrow V_n$ hold''.  Thus, \cref{thm:qpol-qmet} can be read as
\begin{align*}
\text{quasi-Polish space} ={}& \text{``space with countably many generic open sets, and} \\ &\text{ countably many relations imposed between them''}.
\end{align*}

The purpose of these notes is to give a concise, self-contained account of the basic theory of quasi-Polish spaces from this point of view.  That is, we take \cref{thm:qpol-qmet} as a definition; in fact, we will not mention quasi-metrics at all.  Whenever we show that a space is quasi-Polish, we will give an explicit $\*\Pi^0_2$ definition of it as a subspace of a known quasi-Polish space (such as $\#S^\#N$).  Our exposition also makes no reference to domain theory or various other classes of spaces inspired by computability theory (see e.g., \cite[\S9]{deB}).  It is hoped that such an approach will be easily accessible to descriptive set theorists and others familiar with the classical theory of Polish spaces.

We would like to stress that these notes contain essentially no new results.
Most of the results we discuss are from the papers \cite{deB} and \cite{dBK}, or are easy generalizations of classical results for Polish spaces.
Whenever possible, we give a reference to the same (or equivalent) result in one of these papers.
However, the proofs we give are usually quite different from those referenced, reflecting our differing point of view.

As our main goal is to give a concise exposition of the basic results about quasi-Polish spaces, we have neglected to treat many other relevant topics, e.g., local compactness \cite[\S8]{deB}, the Hausdorff--Kuratowski theorem and difference hierarchy \cite[\S13]{deB}, Hurewicz's theorem for non-quasi-Polish $\*\Pi^1_1$ sets \cite{dB2}, and upper powerspaces \cite{dBK}, among others.  For the same reason, we do not include a comprehensive bibliography, for which we refer the reader to the aforementioned papers.

Finally, we remark that our approach is heavily inspired by the correspondence between quasi-Polish spaces and countably (co)presented locales \cite{Hec}.  A \defn{locale} is, informally, a topological space without an underlying set, consisting only of an abstract lattice of ``open sets''.  The definition of quasi-Polish spaces in terms of countably many ``generators and relations'' for their open set lattices leads naturally to the idea of forgetting about the points altogether and regarding the open sets as an abstract lattice, i.e., replacing spaces with locales.  In what follows, we will not refer explicitly to the localic viewpoint; however, the reader who is familiar with locale theory will no doubt recognize its influence in several places (most notably \cref{sec:posite}).

\medskip
\textit{Acknowledgments.}  I would like to thank Alexander Kechris and Matthew de~Brecht for providing some comments on earlier revisions of these notes.

\section{Basic definitions}
\label{sec:defns}

Recall that on an arbitrary topological space $X$, the \defn{specialization preorder} is given by
\begin{align*}
x \le y \iff x \in \-{\{y\}} \iff \forall \text{ (basic) open $U$}\, (x \in U \implies y \in U).
\end{align*}
The specialization preorder is a partial order iff $X$ is $T_0$, and is discrete iff $X$ is $T_1$.  Open sets are upward-closed; closed sets are downward-closed.  The principal ideal
\begin{align*}
\down x := \{y \in X \mid y \le x\}
\end{align*}
generated by a point $x \in X$ coincides with its closure $\-{\{x\}}$.

The \defn{Sierpinski space} $\#S = \{0, 1\}$ has $\{1\}$ open but not closed; the specialization order is thus given by $0 < 1$.

We will be concerned with product spaces $\#S^I$ and their subspaces, especially for $I$ countable.  Whenever convenient, we identify $\#S^I$ with $\@P(I)$, the powerset of $I$; note that the specialization order on $\#S^I$ corresponds to inclusion of subsets.  A basis of open sets in $\#S^I$ consists of the sets
\begin{align*}
\up s := \{x \in \@P(I) \mid s \subseteq x\} \qquad\text{for finite $s \subseteq I$}.
\end{align*}

Given an arbitrary topological space $X$, not necessarily metrizable, we define the \defn{Borel hierarchy} on $X$ as follows; this definition is due to Selivanov \cite{Sel}.  The $\*\Sigma^0_1$ sets are the open sets.  For an ordinal $\xi > 1$, the $\*\Sigma^0_\xi$ sets are those of the form
\begin{align*}
\bigcup_{n \in \#N} (A_n \setminus B_n) \qquad\text{for $A_n, B_n \in \*\Sigma^0_{\zeta_n}(X)$, $\zeta_n < \xi$}
\end{align*}
(we write $\*\Sigma^0_\xi(X)$ for the set of $\*\Sigma^0_\xi$ sets in $X$).  It is easy to see by induction that for $\xi > 2$, we may take $A_n = X$ above, as in the usual definition of the Borel hierarchy (in the metrizable case).  The $\*\Pi^0_\xi$ sets are the complements of the $\*\Sigma^0_\xi$ sets, and the $\*\Delta^0_\xi$ sets are those which are both $\*\Sigma^0_\xi$ and $\*\Pi^0_\xi$; these are denoted $\*\Pi^0_\xi(X), \*\Delta^0_\xi(X)$ respectively.  A set is \defn{Borel} if it is $\*\Sigma^0_\xi$ for some $\xi < \omega_1$.  We have the usual picture of the Borel hierarchy:
\begin{equation*}
\begin{tikzcd}[row sep=0.5em,column sep=1em,every arrow/.style={phantom,"\subseteq" sloped}]
& \*\Sigma^0_1 \drar & \subseteq & \*\Sigma^0_2 \drar & \subseteq & \dotsb & \subseteq & \*\Sigma^0_\xi \drar & \subseteq & \dotsb \\
\*\Delta^0_1 \urar \drar && \*\Delta^0_2 \urar \drar && \dotsb && \*\Delta^0_\xi \urar \drar && \dotsb \\
& \*\Pi^0_1 \urar & \subseteq & \*\Pi^0_2 \urar & \subseteq & \dotsb & \subseteq & \*\Pi^0_\xi \urar & \subseteq & \dotsb
\end{tikzcd}
\end{equation*}

Of particular note are the $\*\Pi^0_2$ sets
\begin{align*}
\bigcap_n (\neg U_n \cup V_n) = \{x \in X \mid \forall n\, (x \in U_n \implies x \in V_n)\}
\end{align*}
for $U_n, V_n$ open; they are the result of ``imposing countably many relations between open sets''.  The following are immediate:

\begin{proposition}[{\cite[Proposition~8]{deB}}]
Points in a first-countable $T_0$ space are $\*\Pi^0_2$.  \qed
\end{proposition}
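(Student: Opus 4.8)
The plan is to exhibit $\{x\}$ as an intersection of two obviously-$\*\Pi^0_2$ pieces coming from the specialization order. Recall that $x \le y$ holds iff $y$ lies in every open neighbourhood of $x$, and that $\down x = \-{\{x\}}$. Hence the set of points that have exactly the same open neighbourhoods as $x$, namely $\{y \in X : x \le y \text{ and } y \le x\}$, equals $\left(\bigcap_{U \ni x\ \text{open}} U\right) \cap \-{\{x\}}$. The role of the $T_0$ hypothesis is precisely to make the specialization preorder antisymmetric, so that this set is exactly $\{x\}$.

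Next I would use first-countability to cut the (a priori uncountable) intersection $\bigcap_{U \ni x} U$ down to a countable one: fixing a countable basis of open neighbourhoods $(W_n)_n$ of $x$, every open $U \ni x$ contains some $W_n$, so $\bigcap_{U \ni x} U = \bigcap_n W_n$. Now $\bigcap_n W_n$ is a $G_\delta$, hence $\*\Pi^0_2$ --- indeed $\bigcap_n W_n = \bigcap_n(\neg X \cup W_n)$ --- and $\-{\{x\}}$ is closed, hence $\*\Pi^0_1 \subseteq \*\Pi^0_2$. Since $\*\Pi^0_2$ is closed under countable (in particular, binary) intersections, $\{x\} = \-{\{x\}} \cap \bigcap_n W_n$ is $\*\Pi^0_2$. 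If one wants it spelled out as a single formula of the displayed type, put $U_0 := X \setminus \-{\{x\}}$, $V_0 := \emptyset$, and $U_n := X$, $V_n := W_n$ for $n \ge 1$; then $\{x\} = \bigcap_n (\neg U_n \cup V_n)$.

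There is no real obstacle here: every step is a direct unwinding of the definitions of the specialization order, first-countability, and $\*\Pi^0_2$. The only points worth flagging explicitly are that $T_0$ is used solely to identify the set of points sharing $x$'s neighbourhoods with the singleton $\{x\}$, and that first-countability is used solely to replace an arbitrary intersection of neighbourhoods by a countable one.
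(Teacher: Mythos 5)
Your proof is correct, and it is exactly the argument the paper has in mind: the paper marks this proposition as immediate (no proof is given), and the intended justification is precisely your decomposition $\{x\} = \-{\{x\}} \cap \bigcap_n W_n$, using $T_0$ for antisymmetry of the specialization preorder and first-countability to make the intersection countable. The explicit choice of $U_n, V_n$ at the end correctly exhibits the set in the required $\*\Pi^0_2$ form.
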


\begin{proposition}[{\cite[Proposition~9]{deB}}]
The specialization preorder on a second-countable space is $\*\Pi^0_2$.  Hence, the equality relation on a second-countable $T_0$ space is $\*\Pi^0_2$.  \qed
\end{proposition}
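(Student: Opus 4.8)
The plan is to unwind the definition of the specialization preorder and observe that it already has the right shape. Fix a countable basis $(U_n)_{n \in \#N}$ for $X$. By definition, for $x, y \in X$ we have $x \le y$ iff $(x \in U \implies y \in U)$ for every open $U$; and it suffices to test this on the basic open sets $U_n$, since any open set containing $x$ contains some $U_n$ with $x \in U_n$. Hence
\[
\{(x,y) \in X^2 \mid x \le y\} \;=\; \bigcap_{n} \{(x,y) \mid x \in U_n \implies y \in U_n\} \;=\; \bigcap_n \bigl(\neg(U_n \times X) \cup (X \times U_n)\bigr).
\]
Since each $U_n \times X$ and $X \times U_n$ is open in $X^2$, the right-hand side is visibly a $\*\Pi^0_2$ subset of $X^2$.

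For the second assertion, note that on a $T_0$ space the specialization preorder is antisymmetric, so $x = y$ iff $x \le y$ and $y \le x$, giving
\[
\{(x,y) \in X^2 \mid x = y\} \;=\; \bigcap_n \bigl(\neg(U_n \times X) \cup (X \times U_n)\bigr) \cap \bigl(\neg(X \times U_n) \cup (U_n \times X)\bigr),
\]
which is again of $\*\Pi^0_2$ form after reindexing the countably many open sets involved. (Equivalently, $\{(x,y) \mid y \le x\}$ is the preimage of $\{(x,y) \mid x \le y\}$ under the coordinate swap, a homeomorphism of $X^2$, and $\*\Pi^0_2$ is closed under finite intersections.)

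There is essentially no obstacle here: the only facts used are that testing the implications on a basis rather than on all open sets is harmless (immediate from the definition of a basis) and that $U_n \times X$ and $X \times U_n$ are open in the product topology. The one point worth stating explicitly is that the class $\*\Pi^0_2$, as defined in the excerpt, is closed under countable intersections — immediate from the defining form $\bigcap_n(\neg U_n \cup V_n)$ — so the finite intersection and reindexing in the equality case stays within the class.
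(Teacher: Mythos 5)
Your proof is correct and is exactly the argument the paper has in mind (the paper simply declares the proposition ``immediate'' with no written proof): reduce the universally quantified implication to a countable basis, read off the resulting set as $\bigcap_n(\neg(U_n\times X)\cup(X\times U_n))$, and intersect with its swap for equality under $T_0$. Nothing is missing.
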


A \defn{quasi-Polish space} $X$ is a homeomorphic copy of a $\*\Pi^0_2$ subspace of $\#S^I$ for some countable $I$, equivalently of $\#S^\#N$ \cite[Theorem~24]{deB}.  In other words, it is the result of imposing countably many relations between countably many ``generic'' open sets (the subbasic open sets $\up\{i\} \subseteq \#S^\#N$).  This is made more explicit by the following definitions.

For a topological space $X$ and a collection $\@U$ of open sets in $X$, define
\begin{align*}
e_\@U : X &--> \#S^\@U \\
x &|--> \{U \in \@U \mid x \in U\}.
\end{align*}
$e_\@U$ is continuous, and is an embedding if $X$ is $T_0$ and $\@U$ is a subbasis, in which case we call $e_\@U$ the \defn{canonical embedding} (with respect to $\@U$).
A \defn{countable copresentation} of a $T_0$ space $X$ consists of a countable subbasis $\@U$ for $X$ together with a $\*\Pi^0_2$ definition of $e_\@U(X) \subseteq \#S^\@U$.  Thus, $X$ is quasi-Polish iff it is \defn{countably copresented} (has a countable copresentation).

Many properties of quasi-Polish spaces can also be established with no extra effort for the more general class of \defn{countably correlated spaces}, which are homeomorphic copies of $\*\Pi^0_2$ subspaces of $\#S^I$ for arbitrary index sets $I$.

Recall that a \defn{Polish space} is a separable, completely metrizable topological space, while a \defn{standard Borel space} is a set equipped with the Borel $\sigma$-algebra of some Polish topology.  See \cite{Kec} for basic descriptive set theory on Polish spaces.  We will show below (\cref{thm:pol-qpol}) that quasi-Polish spaces are a generalization of Polish spaces; hence, most of the results that follow are generalizations of their classical analogs for Polish spaces.

\section{Basic properties}
\label{sec:limcolim}

\begin{proposition}
\label{thm:qpol-sborel}
Quasi-Polish spaces are standard Borel, and can be made Polish by adjoining countably many closed sets to the topology.
\end{proposition}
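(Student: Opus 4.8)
The plan is to prove the second assertion and then deduce the first from it. Suppose we can find a Polish topology $\tau'$ on $X$ obtained by adjoining countably many $\tau$-closed sets to the given (quasi-Polish) topology $\tau$. Then $\tau\subseteq\tau'$, and $\tau'$ is generated by $\tau$ together with countably many Borel sets of $(X,\tau)$, so $\tau$ and $\tau'$ generate the same Borel $\sigma$-algebra; since $\tau'$ is Polish, $X$ with that $\sigma$-algebra is standard Borel. So it suffices to exhibit such a $\tau'$.

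The candidate is built as follows. Fix an expression of $X$ as a $\*\Pi^0_2$ subspace of $\#S^\#N$, say $X=\bigcap_n(\neg U_n\cup V_n)$ with $U_n,V_n\subseteq\#S^\#N$ open, and identify $\#S^\#N$ with $\@P(\#N)$ as usual. Write $2^\#N$ for the set $\@P(\#N)$ equipped with the \emph{Cantor} topology, i.e. the product topology of the \emph{discrete} two-point space; its subbasis consists of the sets $\up\{i\}$ together with their complements $\neg\up\{i\}$ for $i\in\#N$, so it refines the topology of $\#S^\#N$, and $2^\#N$ is a compact Polish space. Restricting to $X$, the Cantor subspace topology $\tau'$ on $X$ is exactly the topology generated by $\tau$ together with the countably many sets $\neg\up\{i\}\cap X$, each of which is $\tau$-closed, being the intersection of $X$ with a closed subset of $\#S^\#N$. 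It therefore remains to show that $(X,\tau')$ is Polish, and by the classical fact that a $G_\delta$ subspace of a Polish space is Polish (see \cite{Kec}) it is enough to check that $X$ is $G_\delta$ in $2^\#N$.

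This is where the $\*\Pi^0_2$ definition enters. Since $U_n$ is open in $\#S^\#N$, it is a countable union $U_n=\bigcup_k\up s_{n,k}$ of basic open sets indexed by finite sets $s_{n,k}\subseteq\#N$, and the crucial observation is that each $\up s_{n,k}$ is \emph{clopen} in $2^\#N$. Hence
\[
\neg U_n\cup V_n=\bigcap_k\bigl(\neg\up s_{n,k}\cup V_n\bigr),
\]
and each $\neg\up s_{n,k}\cup V_n$ is open in $2^\#N$, being the union of a clopen set with the (still open) set $V_n$. Therefore $X=\bigcap_{n,k}\bigl(\neg\up s_{n,k}\cup V_n\bigr)$ is a countable intersection of $2^\#N$-open sets, hence $G_\delta$ in $2^\#N$, as required.

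The crux — the only step that is not pure bookkeeping — is this last verification, and within it the realization that $\neg U\cup V$ is \emph{not} in general $G_\delta$ for arbitrary open $U,V$ in a metrizable space: what saves the day is that $U$ and $V$ are open in the \emph{coarser} Sierpinski-power topology, so $U$ decomposes into basic pieces $\up s$ that become clopen — not merely closed — in the Cantor refinement, which is exactly what converts each ``implication'' $U\Rightarrow V$ into an honest metrizable $G_\delta$ condition. Everything else — the reduction of standard-Borelness to the Polishness claim, and the appeal to the $G_\delta$-subspace theorem — is routine.
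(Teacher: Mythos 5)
Your proof is correct and follows essentially the same route as the paper's (which is a one-line version of the same argument: pass to the Cantor topology $2^\#N$, observe that $X$ becomes $G_\delta$ there, and note that the refinement only adjoins complements of subbasic open sets, so the Borel $\sigma$-algebras agree). The only difference is that you spell out the key verification the paper leaves implicit, namely that $\neg U_n \cup V_n = \bigcap_k(\neg\up s_{n,k} \cup V_n)$ is $G_\delta$ in $2^\#N$ because the basic sets $\up s$ become clopen there; this is exactly the right point to make explicit.
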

\begin{proof}
If $X \subseteq \#S^\#N$ is $\*\Pi^0_2$, then $X \subseteq 2^\#N$ is $G_\delta$, and is the result of adjoining the complements of the (sub)basic open sets in $\#S^\#N$ (whence the Borel $\sigma$-algebras agree).
\end{proof}

\begin{proposition}[{\cite[Theorem~22]{deB}}]
\label{thm:qpol-pi02}
A $\*\Pi^0_2$ subspace of a quasi-Polish space is quasi-Polish.  Similarly for countably correlated spaces.
\end{proposition}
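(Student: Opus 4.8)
The plan is to establish a transitivity principle, from which both claims follow immediately: for \emph{any} topological space $Z$, a $\*\Pi^0_2$ subset of a $\*\Pi^0_2$ subspace of $Z$ is again a $\*\Pi^0_2$ subset of $Z$. Granting this, if $X$ is quasi-Polish we may take it to be a $\*\Pi^0_2$ subspace of some $\#S^I$ with $I$ countable; then a $\*\Pi^0_2$ subspace $Y \subseteq X$ is a $\*\Pi^0_2$ subspace of $\#S^I$, hence quasi-Polish. The countably correlated case is verbatim the same, with $I$ allowed arbitrary.

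To prove the principle, I would simply unwind the definitions using the characterization recorded just above (``of particular note''): a subset of a space $W$ is $\*\Pi^0_2$ precisely when it has the form $\bigcap_n(\neg U_n \cup V_n)$ for countably many open $U_n, V_n \subseteq W$. So write $X = \bigcap_m(\neg A_m \cup B_m)$ with $A_m, B_m$ open in $Z$, and $Y = \bigcap_n(\neg U_n \cup V_n)$ with $U_n, V_n$ open in $X$. Since $X$ carries the subspace topology, pick open sets $\tilde U_n, \tilde V_n \subseteq Z$ restricting to $U_n, V_n$; a short computation, keeping careful track of whether each complement is formed in $X$ or in $Z$, shows $\neg U_n \cup V_n = X \cap (\neg \tilde U_n \cup \tilde V_n)$ (complements on the right taken in $Z$). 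Intersecting over $n$ and then intersecting with the given presentation of $X$ yields
\begin{align*}
Y = \bigcap_m(\neg A_m \cup B_m) \cap \bigcap_n(\neg \tilde U_n \cup \tilde V_n),
\end{align*}
a countable intersection of sets of the form $\neg(\text{open}) \cup (\text{open})$ in $Z$, i.e.\ a $\*\Pi^0_2$ subset of $Z$.

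There is no real obstacle here: once the definitions are unwound the statement is purely formal, and the only thing that needs watching is the bookkeeping of complements relative to the two ambient spaces $X$ and $Z$.
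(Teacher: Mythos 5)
Your proposal is correct and is precisely the argument the paper has in mind: its proof reads simply ``Obvious,'' and the content being elided is exactly the transitivity principle you spell out, namely that $X \cap (\neg_X U_n \cup V_n) = X \cap (\neg \tilde U_n \cup \tilde V_n)$ so that a $\*\Pi^0_2$ subset of a $\*\Pi^0_2$ subspace of $\#S^I$ is again $\*\Pi^0_2$ in $\#S^I$. Your bookkeeping of relative complements is accurate, so this is a faithful (and more explicit) rendering of the paper's intended proof.
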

\begin{proof}
Obvious.
\end{proof}

\begin{proposition}[{\cite[Corollary~43]{deB}}]
\label{thm:qpol-prod}
A countable product of quasi-Polish spaces is quasi-Polish.  Similarly for countably correlated spaces.
\end{proposition}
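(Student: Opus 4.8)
The plan is to unwind the definition directly. Suppose $X_0, X_1, \dots$ are quasi-Polish; I would first fix, for each $n$, a homeomorphism of $X_n$ with a $\*\Pi^0_2$ subspace $X_n \subseteq \#S^{I_n}$ with $I_n$ countable. Since a countable disjoint union of countable sets is countable, the disjoint union $I := \bigsqcup_n I_n$ is countable, and there is a canonical homeomorphism $\prod_n \#S^{I_n} \cong \#S^I$ (grouping coordinates according to which factor they belong to); under it, $\prod_n X_n$ becomes a subspace of $\#S^I$. It then remains to check two things: (i) the subspace topology on $\prod_n X_n$ inherited from $\#S^I$ is the product topology, and (ii) $\prod_n X_n$ is $\*\Pi^0_2$ in $\#S^I$.

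For (i) I would invoke the standard fact that, for any spaces $Y_n$ and subspaces $X_n \subseteq Y_n$, the product topology on $\prod_n X_n$ agrees with the topology it inherits from $\prod_n Y_n$ --- both are generated by the preimages of open subsets of the individual factors. Applying this with $Y_n = \#S^{I_n}$ gives that $\prod_n X_n$, carrying its product topology, embeds homeomorphically into $\#S^I$.

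For (ii) I would write $X_n = \bigcap_m (\neg U_{n,m} \cup V_{n,m})$ with $U_{n,m}, V_{n,m} \subseteq \#S^{I_n}$ open, let $\pi_n \colon \#S^I \to \#S^{I_n}$ be the (continuous) coordinate projection, and observe that
\begin{align*}
\prod_n X_n = \left\{ x \in \#S^I \relmiddle| \forall n\, \forall m\ \left( x \in \pi_n^{-1}(U_{n,m}) \implies x \in \pi_n^{-1}(V_{n,m}) \right) \right\} = \bigcap_{n,m} \left( \neg\pi_n^{-1}(U_{n,m}) \cup \pi_n^{-1}(V_{n,m}) \right),
\end{align*}
where each $\pi_n^{-1}(U_{n,m})$ and $\pi_n^{-1}(V_{n,m})$ is open by continuity of $\pi_n$. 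Reindexing the outer intersection over the countable set $\#N \times \#N$ exhibits $\prod_n X_n$ as a $\*\Pi^0_2$ subspace of $\#S^I$, so it is quasi-Polish. The countably correlated case goes through verbatim, dropping the countability restriction on the $I_n$ (hence on $I$).

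I do not expect any real obstacle. The only step requiring even a moment's thought is the interchange of products and subspaces in (i), which is elementary point-set topology; (ii) reduces to the tautology that $x \in \prod_n X_n$ iff $\pi_n(x) \in X_n$ for all $n$, together with the routine observations that $\*\Pi^0_2$ subsets pull back along continuous maps to $\*\Pi^0_2$ subsets and are closed under countable intersections.
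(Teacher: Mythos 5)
Your proposal is correct and is essentially the paper's own proof: the paper likewise identifies $\prod_n \#S^{I_n}$ with $\#S^{\bigsqcup_n I_n}$ and writes $\prod_n X_n = \bigcap_n \pi_n^{-1}(X_n)$, which is $\*\Pi^0_2$ since preimages of open sets under the continuous projections are open and countable intersections of $\*\Pi^0_2$ sets are $\*\Pi^0_2$. You have merely spelled out the routine point-set details (compatibility of subspace and product topologies, the reindexing over $\#N \times \#N$) that the paper leaves implicit.
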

\begin{proof}
If $X_i \subseteq \#S^{I_i}$ are $\*\Pi^0_2$, then so is $\prod_i X_i = \bigcap_i p_i^{-1}(X_i) \subseteq \prod_i \#S^{I_i}$ where $p_i : \prod_i \#S^{I_i} -> \#S^{I_i}$ is the $i$th projection.
\end{proof}

For any topological space $X$, let
\begin{align*}
X_\bot := X \sqcup \{\bot\}
\end{align*}
where the open sets are those in $X$ together with all of $X_\bot$.  (Thus, $\bot$ is a newly adjoined least element in the specialization preorder, often thought of as ``undefined''.)

\begin{proposition}
\label{thm:qpol-bot}
If $X$ is quasi-Polish, then so is $X_\bot$.  Similarly for countably correlated spaces.
\end{proposition}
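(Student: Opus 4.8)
The plan is to reduce to the case $X = \#S^I$ and then invoke \cref{thm:qpol-pi02}. We may assume that $X$ is literally a $\*\Pi^0_2$ subspace $\bigcap_n (\neg U_n \cup V_n) \subseteq \#S^I$ for open $U_n, V_n$ and $I$ countable (resp.\ arbitrary), since adjoining $\bot$ commutes with homeomorphisms. I would first show that $(\#S^I)_\bot$ is quasi-Polish (resp.\ countably correlated): adjoining a fresh index $*$ to $I$, define $(\#S^I)_\bot \to \#S^{I \sqcup \{*\}}$ by sending each $z \in \#S^I$ to the point whose restriction to $I$ is $z$ and whose $*$-coordinate is $1$, and sending $\bot$ to the least element $\emptyset$. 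The image is $\up\{*\} \cup \{\emptyset\} = \neg\bigl(\bigcup_{i \in I} \up\{i\}\bigr) \cup \up\{*\}$, which is $\*\Pi^0_2$ — indeed of the simple form $\neg W \cup V$ with $W, V$ open — since $\bigcup_{i \in I} \up\{i\}$ is open regardless of the size of $I$. (This is the one point where having uncountably many coordinates, as allowed for countably correlated spaces, requires care: the image should be written as the single relation $\neg W \cup V$, not as the illegal uncountable intersection $\bigcap_{i \in I}(\neg\up\{i\} \cup \up\{*\})$.)

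Next I would check this map is a homeomorphism onto its image. Injectivity is clear. For the topology: intersecting the image with a basic open $\up s$ for finite $s \subseteq I$ recovers the corresponding basic open of $\#S^I$ inside the copy $\up\{*\}$ of $\#S^I$, and none of these sets contains the image $\emptyset$ of $\bot$ (as $\emptyset \in \up s$ only when $s = \emptyset$); moreover every open subset of $\#S^{I \sqcup \{*\}}$ containing $\emptyset$ is the whole space, so the only open subset of the image containing $\emptyset$ is the whole image. Hence the open subsets of the image are exactly the open subsets of $\#S^I$ together with the whole image, which is precisely the topology of $(\#S^I)_\bot$. Together with the previous paragraph and \cref{thm:qpol-pi02} applied to $\#S^{I \sqcup \{*\}}$ (which is quasi-Polish, resp.\ countably correlated), this shows $(\#S^I)_\bot$ is quasi-Polish (resp.\ countably correlated).

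Finally I would observe that $X_\bot$ embeds in $(\#S^I)_\bot$ as a $\*\Pi^0_2$ subspace via the \emph{same} open sets: each $U_n, V_n$ is open in $\#S^I$, hence open in $(\#S^I)_\bot$ and disjoint from $\{\bot\}$, so the set $\bigcap_n (\neg U_n \cup V_n)$ computed in $(\#S^I)_\bot$ contains $\bot$ (as every $\neg U_n$ does) and meets $\#S^I$ in exactly $X$; thus it equals $X_\bot$, and one checks the subspace topology it inherits is the topology of $X_\bot$. Applying \cref{thm:qpol-pi02} once more completes the proof. The argument is routine bookkeeping; beyond the uncountable-$I$ point flagged above, the only thing to keep straight is that adjoining $\bot$ is compatible both with passing to subspaces and with the definition of the class $\*\Pi^0_2$.
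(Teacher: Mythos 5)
Your proof is correct and is essentially the paper's: the composite of your two embeddings $X_\bot \hookrightarrow (\#S^I)_\bot \hookrightarrow \#S^{I\sqcup\{*\}}$ is exactly the paper's map $x \mapsto (x,1)$, $\bot \mapsto (0,0)$ into $\#S^I \times \#S$, with the same image. You merely factor the construction into two steps and spell out the $\*\Pi^0_2$ and homeomorphism verifications (including the correct observation that the ``$x \neq \emptyset \implies a = 1$'' clause must be a single implication between open sets when $I$ is uncountable) that the paper leaves implicit.
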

\begin{proof}
Suppose $X \subseteq \#S^I$ is $\*\Pi^0_2$.  Then
\begin{align*}
X_\bot &\cong \{(x, a) \in \#S^I \times \#S \mid (x, a) = (0, 0) \OR (x \in X \AND a = 1)\} \\
x &|-> (x, 1) \\
\bot &|-> (0, 0).
\qedhere
\end{align*}
\end{proof}

\begin{proposition}[{\cite[Corollary~43]{deB}}]
\label{thm:qpol-disj}
A countable disjoint union of quasi-Polish spaces is quasi-Polish.  Similarly for countably correlated spaces.
\end{proposition}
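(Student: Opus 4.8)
The plan is to realize $\coprod_n X_n$ as a $\*\Pi^0_2$ subspace of the countable product $P := \prod_n (X_n)_\bot$, and then to apply \cref{thm:qpol-bot,thm:qpol-prod,thm:qpol-pi02}. Write $p_n \colon P \to (X_n)_\bot$ for the $n$th projection. Since $X_n$ is open in $(X_n)_\bot$ (so that $\{\bot\}$ is closed there), the set $P_n := p_n^{-1}(X_n) = \{z \in P \mid z_n \ne \bot\}$ is open in $P$, with closed complement. Put
\begin{align*}
S := \Bigl(\bigcup_n P_n\Bigr) \cap \bigcap_{\substack{m, n \\ m \ne n}} (\neg P_m \cup \neg P_n) = \{z \in P \mid z_n \ne \bot \text{ for exactly one } n\}.
\end{align*}
The first factor is open, and the second is a countable intersection of complements of open sets, hence closed; since open and closed sets are $\*\Pi^0_2$ and $\*\Pi^0_2$ is closed under countable intersections, $S$ is $\*\Pi^0_2$ in $P$.

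By \cref{thm:qpol-bot} each $(X_n)_\bot$ is quasi-Polish (resp.\ countably correlated); by \cref{thm:qpol-prod} so is $P$; by \cref{thm:qpol-pi02} so is $S$. It remains to identify $S$ with $\coprod_n X_n$ as topological spaces. Each $S_n := S \cap P_n$ is open in $S$, and the $S_n$ are pairwise disjoint and cover $S$, so $S$ carries the coproduct topology $S = \coprod_n S_n$. Finally, $S_n = \{z \in P \mid z_n \in X_n \text{ and } z_m = \bot \text{ for all } m \ne n\}$ is homeomorphic to $X_n$ via the map sending $x \in X_n$ to the tuple $z$ with $z_n = x$ and $z_m = \bot$ for $m \ne n$: restricting the subbasic open $p_m^{-1}(U)$ of $P$ to $S_n$ gives a copy of $U \cap X_n$ when $m = n$, and either $\emptyset$ or all of $S_n$ when $m \ne n$, so these restrictions generate precisely the topology of $X_n$.

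I expect no serious obstacle. The only point needing (routine) care is the last verification that the subspace topology on $S$ coincides with the coproduct topology; the remaining ingredients — that open and closed sets are $\*\Pi^0_2$, and that $\*\Pi^0_2$ is closed under countable intersections — are immediate from the definitions and the Borel hierarchy picture. The countably correlated case is literally the same argument, since $(X_n)_\bot$ is countably correlated whenever $X_n$ is, the index set of the product $P$ is countable, and $S$ is cut out by countably many open and closed conditions; note that this route never requires the individual $I_n$ to be countable, which is why passing through $P$ is more convenient than embedding $\coprod_n X_n$ directly into $\bigl(\prod_n \#S^{I_n}\bigr) \times \#S^\#N$.
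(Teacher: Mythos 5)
Your proposal is correct and is essentially identical to the paper's proof: the paper also realizes $\bigsqcup_n X_n$ as the $\*\Pi^0_2$ subspace of $\prod_n (X_n)_\bot$ consisting of tuples with exactly one non-$\bot$ coordinate, invoking \cref{thm:qpol-bot}, \cref{thm:qpol-prod}, and \cref{thm:qpol-pi02}. You have merely spelled out the routine verifications (that the defining condition is open-intersect-closed, hence $\*\Pi^0_2$, and that the subspace topology agrees with the coproduct topology) that the paper leaves implicit.
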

\begin{proof}
Let $X_i$ be quasi-Polish (or countably correlated).  Then
\begin{align*}
\bigsqcup_i X_i &\cong \{(x_i)_i \in \prod_i (X_i)_\bot \mid \exists i\, (x_i \in X_i) \AND \forall i \ne j\, (x_i \in X_i \implies x_j = \bot)\} \\
X_i \ni x &|-> (x \text{ if $j = i$, else } \bot)_j.
\qedhere
\end{align*}
\end{proof}

A topological space $X$ is \defn{$\sigma$-locally quasi-Polish} if it has a countable cover $\@U$ by open quasi-Polish subspaces.

\begin{proposition}
\label{thm:lqpol-qpol}
$\sigma$-Locally quasi-Polish spaces are quasi-Polish.
\end{proposition}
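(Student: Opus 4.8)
The plan is to realize $X$ directly as a $\*\Pi^0_2$ subspace of $\#S^{\mathcal B}$ for a suitable countable set $\mathcal B$ of open subsets of $X$, glued together out of copresentations of the members of the cover. First note that $X$ is $T_0$: if $x \ne y$, then either some member of the cover contains exactly one of them, or some member $U$ contains both, in which case $U$ (being $T_0$) has an open subset separating them, and this is an open subset of $X$ since $U$ is open in $X$. So write $X = \bigcup_i U_i$ with $i$ ranging over a countable set, each $U_i$ open in $X$ and quasi-Polish, and for each $i$ fix a countable copresentation of $U_i$, i.e.\ a countable subbasis $\mathcal B_i$ of $U_i$ together with a $\*\Pi^0_2$ definition of $e_{\mathcal B_i}(U_i) \subseteq \#S^{\mathcal B_i}$. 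Since $U_i$ is open in $X$, each member of $\mathcal B_i$ is open in $X$; put $\mathcal B := \bigcup_i (\mathcal B_i \cup \{U_i\})$, a countable subbasis of $X$ consisting of open sets, and let $e := e_{\mathcal B} : X \hookrightarrow \#S^{\mathcal B}$ be the corresponding canonical embedding. It remains to show that $e(X)$ is $\*\Pi^0_2$: this gives a countable copresentation of $X$, so $X$ is quasi-Polish.

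The crux is that once a point $x$ is known to lie in some $U_i$, its position is completely determined by its $\mathcal B_i$-coordinates: for each $B \in \mathcal B$ the set $B \cap U_i$ is open in $U_i$, hence equals $\bigcup_{s \in S_{i,B}} \bigcap_{C \in s} C$ for a countable family $S_{i,B}$ of finite subsets of $\mathcal B_i$, so that for $x \in U_i$ one has $x \in B \iff \exists s \in S_{i,B}\,(s \subseteq e(x))$. Using this I claim
\begin{align*}
e(X) = \bigl\{\, \xi \in \#S^{\mathcal B} \bigm| {}& \exists i\,(U_i \in \xi), \text{ and } \forall i\,\bigl(U_i \in \xi \implies \xi{\restriction}\mathcal B_i \in e_{\mathcal B_i}(U_i) \\
& \text{ and } \forall B \in \mathcal B\,(B \in \xi \iff \exists s \in S_{i,B}\,(s \subseteq \xi))\bigr)\,\bigr\}.
\end{align*}
The inclusion $\subseteq$ is a direct check from the definitions. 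For $\supseteq$, given $\xi$ in the right-hand side, pick $i$ with $U_i \in \xi$; the clause $\xi{\restriction}\mathcal B_i \in e_{\mathcal B_i}(U_i)$ produces a point $x \in U_i$ with $e_{\mathcal B_i}(x) = \xi{\restriction}\mathcal B_i$, and then the last clause, together with the description of $B \cap U_i$ above, forces $B \in \xi \iff x \in B$ for every $B \in \mathcal B$, i.e.\ $\xi = e(x)$.

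Finally, the right-hand side is $\*\Pi^0_2$. Recall (as used in the proof of \cref{thm:qpol-prod}) that the $\*\Pi^0_2$ sets contain all open sets, are closed under countable intersections and under preimages along continuous maps, and moreover $(O \implies P)$ is $\*\Pi^0_2$ whenever $O$ is open and $P$ is $\*\Pi^0_2$, while a biconditional between two open sets is $\*\Pi^0_2$. Now $\exists i\,(U_i \in \xi)$ is open; $\xi{\restriction}\mathcal B_i \in e_{\mathcal B_i}(U_i)$ is the preimage of a $\*\Pi^0_2$ set along the coordinate projection $\#S^{\mathcal B} \to \#S^{\mathcal B_i}$, $\xi \mapsto \xi{\restriction}\mathcal B_i$; each condition $B \in \xi \iff \exists s \in S_{i,B}\,(s \subseteq \xi)$ is a biconditional between two open sets; the quantifiers over $B$ and over $i$ are countable intersections; and $U_i \in \xi \implies (\cdots)$ is an implication out of an open set. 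Assembling these gives that $e(X)$ is $\*\Pi^0_2$, completing the proof.

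The one genuinely load-bearing step — and the one to get right — is the reduction opening the second paragraph: membership of a point $x \in U_i$ in any subbasic open set of $X$ is an \emph{open} condition on the $\mathcal B_i$-coordinates of $x$, and this holds precisely because $U_i$ is open in $X$. Two clauses in the displayed formula also require care and cannot be dropped: $\exists i\,(U_i \in \xi)$ (otherwise the universal part is vacuously true at, e.g., $\xi = \emptyset$, which is not in $e(X)$), and $\xi{\restriction}\mathcal B_i \in e_{\mathcal B_i}(U_i)$ (otherwise $\xi{\restriction}\mathcal B_i$ need not equal $e_{\mathcal B_i}(x)$ for any $x \in U_i$). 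Everything else is routine bookkeeping with the closure properties of $\*\Pi^0_2$.
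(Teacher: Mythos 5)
Your proof is correct, but it takes a genuinely different route from the paper's. The paper glues the pieces at the level of points: it embeds $X$ into $\prod_{U \in \@U} U_\bot$ by $x \mapsto (x \text{ if } x \in U,\text{ else } \bot)_U$ and carves out the image by the $\*\Pi^0_2$ conditions that some coordinate is defined and that the coordinates are consistent on overlaps (the clause $x_U \in U \cap V \implies x_V = x_U$ using that equality is $\*\Pi^0_2$ in second-countable $T_0$ spaces); quasi-Polishness then follows from the closure properties already on record (\cref{thm:qpol-prod}, \cref{thm:qpol-bot}, \cref{thm:qpol-pi02}). You instead glue at the level of open sets: you assemble the local subbases $\mathcal{B}_i$ (together with the $U_i$ themselves) into one countable subbasis $\mathcal{B}$ for $X$ and write down an explicit $\*\Pi^0_2$ definition of $e_{\mathcal{B}}(X) \subseteq \#S^{\mathcal{B}}$. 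Your load-bearing observation --- that for $x \in U_i$, membership of $x$ in any $B \in \mathcal{B}$ is an open condition on the $\mathcal{B}_i$-coordinates precisely because $U_i$ is open in $X$ --- is correct, and it is essentially the same ``re-present on new generators'' device the paper deploys later in the proof of \cref{thm:qpol-sub-pi02}. The paper's argument is shorter and more modular; yours is more self-contained (it needs neither the $(-)_\bot$ construction nor the $\*\Pi^0_2$-ness of equality) and has the added benefit of producing an explicit countable copresentation of $X$ on a concrete subbasis built from copresentations of the pieces. You are also right to verify that $X$ is $T_0$ so that $e_{\mathcal{B}}$ is an embedding; the paper leaves that point implicit.
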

\begin{proof}
Let $X, \@U$ be as above.  Then
\begin{align*}
X &\cong \left\{(x_U)_U \in \prod_{U \in \@U} U_\bot \relmiddle| \begin{aligned}
&\exists U\, (x_U \in U) \AND \\
&\forall U, V\, \left(\begin{aligned}
    x_U \in U \cap V &\implies x_V = x_U \AND \\
    x_U \in U \setminus V &\implies x_V = \bot
\end{aligned}\right)
\end{aligned}\right\} \\
x &|-> (x \text{ if $x \in U$, else } \bot)_U.
\qedhere
\end{align*}
\end{proof}

\section{Subspaces}
\label{sec:sub}

Recall \cite[3.11]{Kec} that a subspace of a Polish space is Polish iff it is $G_\delta$.  An analogous fact holds for quasi-Polish spaces.

\begin{theorem}[{\cite[Theorem~21]{deB}}]
\label{thm:qpol-sub-pi02}
Let $X$ be a second-countable $T_0$ space and $Y \subseteq X$ be a countably correlated subspace.  Then $Y \subseteq X$ is $\*\Pi^0_2$.
\end{theorem}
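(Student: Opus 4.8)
We are given a second-countable $T_0$ space $X$ and a subspace $Y \subseteq X$ which is countably correlated, i.e., admits a homeomorphism with a $\*\Pi^0_2$ subspace of some $\#S^I$. We want to produce a $\*\Pi^0_2$ definition of $Y$ inside $X$ using the open sets of $X$. The idea is to transfer the ``abstract'' $\*\Pi^0_2$ definition of $Y$ (witnessed by open sets of $Y$) into a $\*\Pi^0_2$ definition inside $X$ (witnessed by open sets of $X$), by first extending the relevant open sets of $Y$ to open sets of $X$, and then adding extra relations that pin down exactly which points of $X$ actually lie in $Y$.

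More precisely: let $\@W$ be a countable subbasis for $X$; by replacing $\@W$ with the collection of finite intersections we may take it to be a countable basis. The given homeomorphism $Y \cong Z$ with $Z \subseteq \#S^I$ $\*\Pi^0_2$ gives us a countable family of open sets of $Y$ — namely, the preimages $W_i := Y \cap$ (pullback of the subbasic open set $\up\{i\}$) — together with countably many open $U_n, V_n \subseteq Y$ such that $Z$, transported back, is $\bigcap_n(\neg U_n \cup V_n)$ inside the topology these $W_i$ generate. Actually it is cleaner to observe: since $Y$ is itself second-countable $T_0$ (being homeomorphic to a subspace of $\#S^I$, $I$ can be taken countable), $Y$ embeds via its own canonical embedding, and the copresentation of $Y$ is a $\*\Pi^0_2$ subset of $\#S^{\@V}$ for a countable basis $\@V$ of $Y$. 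Each $V \in \@V$ has the form $V = Y \cap \widetilde V$ for some open $\widetilde V \subseteq X$ (choose one such extension for each $V$). Now the canonical embedding $e_{\@W} : X \hookrightarrow \#S^{\@W}$ is an embedding since $X$ is $T_0$ and $\@W$ is a subbasis.

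The core step: describe $e_{\@W}(Y)$ as a $\*\Pi^0_2$ subset of $e_{\@W}(X)$, equivalently exhibit $Y = \bigcap_k(\neg A_k \cup B_k)$ for open $A_k, B_k \subseteq X$. We do this by combining two kinds of conditions. First, the ``coherence'' conditions asserting that the $\widetilde V$'s, restricted to $Y$, behave like a genuine copresentation — but these live in $Y$, not $X$, so instead we argue as follows. Consider the map $f : X \to \#S^{\@V}$ sending $x \mapsto \{V \in \@V \mid x \in \widetilde V\}$; this is continuous. On $Y$ it agrees with the canonical embedding $e_{\@V}$ of $Y$, so $f(Y) \subseteq Z'$ where $Z' := e_{\@V}(Y)$ is the given $\*\Pi^0_2$ set. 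Thus $Y \subseteq f^{-1}(Z')$, and $f^{-1}(Z')$ is $\*\Pi^0_2$ in $X$ (pull back the $\*\Pi^0_2$ definition of $Z'$ through the continuous $f$; open sets pull back to open sets). The reverse inclusion $f^{-1}(Z') \subseteq Y$ is the crux and need not hold for arbitrary choices of extensions $\widetilde V$ — a point $x \in X \setminus Y$ could have $f(x) \in Z'$ if the $\widetilde V$'s fail to separate $x$ from $Y$.

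**The main obstacle.** So the hard part is choosing the extensions, and possibly enlarging the family $\@V$, so that $f^{-1}(Z') = Y$ exactly. Here is where second-countability of $X$ is essential (and presumably where the hypothesis is used): enumerate a countable basis $\@W = \{W_k\}$ of $X$. For the family indexing $f$, take not just extensions of a basis of $Y$, but also throw in all the $W_k$ themselves; so we work with $\#S^{\@V \sqcup \@W}$ and the combined map $g = (e_{\@V}\text{-extension}, e_{\@W}) : X \to \#S^{\@V} \times \#S^{\@W}$, which is now an embedding of $X$ (because $e_{\@W}$ already is). Then $g(Y) = g(X) \cap (Z' \times \#S^{\@W})$ — wait, this only gives $g(Y) \subseteq g(X) \cap (Z' \times \#S^{\@W})$ again, and we need that a point of $X$ whose $\@V$-coordinates land in $Z'$ must lie in $Y$. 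The resolution: use that $Y$, being homeomorphic to a $\*\Pi^0_2$ (hence $G_\delta$ in the zero-dimensional metrizable $2^I$) subset, together with the fact that $Y \hookrightarrow X$ is a topological embedding, lets us express membership in $Y$ via the $\@V$-coordinates alone — because the topology on $Y$ is generated by $\@V$, a point $x \in X$ lies in the closure (in $X$) of the right things, and more carefully, one shows $Y = \{x \in X \mid g(x) \in \overline{g(Y)}\text{-type condition}\}$. Concretely I expect the clean statement to be: $x \in Y$ iff for every basic $W_k$, $x \in W_k \Leftrightarrow$ ``$g(x)$ satisfies the $\@V$-side constraint corresponding to $W_k \cap Y$'' — i.e., we add relations forcing the $\@W$-coordinates to be determined by the $\@V$-coordinates in the way dictated by $Y$, which is a $\*\Pi^0_2$ condition (countably many biconditionals between open sets, each biconditional being the conjunction of two implications $\neg U \cup V$). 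Verifying that this conjunction of (a) $f^{-1}(Z')$ and (b) these coherence biconditionals cuts out exactly $Y$ — no more, no less — is the one genuine verification; the $\*\Pi^0_2$ bookkeeping is then routine since $\*\Pi^0_2$ sets in $X$ are closed under countable intersection and pulling back along continuous maps, both of which follow from the definition together with \Cref{thm:qpol-pi02} and \Cref{thm:qpol-prod}.
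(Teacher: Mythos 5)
Your plan has the right overall shape, and it is in fact the same strategy as the paper's proof: extend the open sets witnessing the copresentation of $Y$ to open sets of $X$, pull the countably many relations back through these extensions, and add, for each member $W$ of a countable subbasis of $X$, a biconditional forcing $W$ to agree with the union of those extensions whose traces on $Y$ are contained in $W$.  But there are two genuine gaps.  The step you flag as ``the one genuine verification'' is the entire content of the theorem, and you do not carry it out.  The nontrivial inclusion is $\supseteq$: given $x \in X$ satisfying all the conditions, you must actually produce a point of $Y$ equal to $x$.  The paper does this by reading off from $x$ a candidate point $z := \{i \in I \mid x \in W_i\} \in \#S^I$ of the copresentation, using the pulled-back relations to see that $z$ lies in the $\*\Pi^0_2$ image $f(Y)$, writing $z = f(y)$ for $y \in Y$, and then using the biconditionals to show $x \in W \iff y \in W$ for every subbasic $W \subseteq X$, so that $x = y$ by $T_0$.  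Your sketch never introduces this auxiliary point $y$; without it there is no route from ``$x$ satisfies the coherence biconditionals'' to ``$x \in Y$'', and the $T_0$ hypothesis---which is what ultimately identifies $x$ with a point of $Y$---is never used in your argument.

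The second gap is that your ``cleaner'' reformulation is circular: the assertion that the canonical embedding $e_{\@V} : Y \to \#S^{\@V}$ with respect to a countable basis $\@V$ of $Y$ has $\*\Pi^0_2$ image is itself an instance of the theorem being proved (applied to the subspace $e_{\@V}(Y) \subseteq \#S^{\@V}$); in the paper this appears only afterwards, as \cref{thm:qpol=2cnt+ccor}.  Relatedly, for a countably correlated $Y$ the index set $I$ may be uncountable, so the pullbacks of the subbasic sets $\up \{i\}$ need not form a countable family, and $Y$ is second-countable only because it sits inside the second-countable $X$, not because ``$I$ can be taken countable''.  The fix is to work directly with the given embedding $f : Y \to \#S^I$ and its countably many relations $f(Y) = \bigcap_n (\neg U_n \cup V_n)$: the relevant unions of extensions (indexed by possibly uncountably many finite $s \subseteq I$) are still single open sets of $X$, and only countably many implications and biconditionals are needed, so the resulting definition of $Y$ remains $\*\Pi^0_2$.
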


The proof we give consists essentially of applying the following simple fact in universal algebra to the lattice of open sets of $Y$.  Given any countably presented algebraic structure $A$ (e.g., group, ring, \ldots) and countably many generators $a_0, a_1, \dotsc \in A$, there is a countable presentation of $A$ using only those generators.  To see this: let $A = \ang{b_0, b_1, \dotsc \mid R}$ be any countable presentation; write each $b_j$ as some word $w_j$ in the $a_i$, and substitute $b_j |-> w_j$ into $R$ to get a set of relations $S$ in the $a_i$; write each $a_i$ as some word $v_i$ in the $b_j$, and substitute $b_j |-> w_j$ into $v_i$ to get a word $v_i'$ in the $a_k$ (which evaluates to $a_i$ in $A$); then $A = \ang{a_0, a_1, \dotsc \mid S \cup \{a_i = v_i'\}_i}$.

\begin{proof}
Let $f : Y -> \#S^I$ be an embedding with $\*\Pi^0_2$ image, say $f(Y) = \bigcap_n (\neg U_n \cup V_n)$ where $U_n, V_n \subseteq \#S^I$ are open.  Thus each $U_n, V_n$ is a union of basic open sets:
\begin{align*}
U_n = \bigcup_{\up s \subseteq U_n} \up s, &&
V_n = \bigcup_{\up s \subseteq V_n} \up s,
\end{align*}
where $s$ runs over finite subsets of $I$.  For each $i \in I$, let $W_i \subseteq X$ be open such that
\begin{align*}
W_i \cap Y = f^{-1}(\up \{i\}).
\end{align*}
For each finite $s \subseteq I$, put $W_s := \bigcap_{i \in s} W_i$, so that
\begin{align*}
W_s \cap Y = f^{-1}(\up s).
\end{align*}
Let $\@W$ be a countable subbasis of open sets in $X$.  We claim that
\begin{align*}
Y = \underbrace{\bigcap_n (\neg \bigcup_{\up s \subseteq U_n} W_s \cup \bigcup_{\up s \subseteq V_n} W_s)}_A \cap \bigcap_{W \in \@W} \underbrace{\neg (W \triangle \bigcup_{f^{-1}(\up s) \subseteq W} W_s)}_{B_W}.
\end{align*}
$\subseteq$ is straightforward.  To prove $\supseteq$, let $x \in X$ belong to the right-hand side.  Put
\begin{align*}
z := \{i \in I \mid x \in W_i\} \in \#S^I.
\end{align*}
Using $x \in A$, we easily have $z \in \bigcap_n (\neg U_n \cup V_n) = f(Y)$.  Let $z = f(y)$.  For each $W \in \@W$, we have
\begin{align*}
\begin{aligned}
y \in W = \bigcup_{f^{-1}(\up s) \subseteq W} f^{-1}(\up s)
&\iff z \in \bigcup_{f^{-1}(\up s) \subseteq W} \up s &&\text{since $f$ is an embedding} \\
&\iff x \in \bigcup_{f^{-1}(\up s) \subseteq W} W_s &&\text{by definition of $z$} \\
&\iff x \in W &&\text{since $x \in B_W$}.
\end{aligned}
\end{align*}
Thus since $X$ is $T_0$, $x = y \in Y$.
\end{proof}

\begin{corollary}[{\cite[Theorem~23]{deB}}]
\label{thm:qpol-sub=pi02}
Let $X$ be a quasi-Polish space.  A subspace $Y \subseteq X$ is quasi-Polish iff it is $\*\Pi^0_2$.  \qed
\end{corollary}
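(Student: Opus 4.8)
The plan is to read off both implications from results already in hand. For the direction ``$\*\Pi^0_2 \implies$ quasi-Polish'': if $Y \subseteq X$ is $\*\Pi^0_2$ and $X$ is quasi-Polish, then $Y$ is quasi-Polish by \cref{thm:qpol-pi02}, with nothing further to check. For the converse ``quasi-Polish $\implies \*\Pi^0_2$'': if $Y$ is quasi-Polish, then in particular it is countably correlated, being a homeomorphic copy of a $\*\Pi^0_2$ subspace of $\#S^\#N$, hence of $\#S^I$ for some (in fact countable) index set $I$; and $X$, being quasi-Polish, embeds into $\#S^\#N$ and is therefore second-countable and $T_0$. Thus \cref{thm:qpol-sub-pi02} applies to the inclusion $Y \subseteq X$ and shows it is $\*\Pi^0_2$.

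I expect no real obstacle here: the substantive content lies entirely in \cref{thm:qpol-sub-pi02}, whose proof rewrites a given copresentation of $Y$ into one whose generators are open sets of $X$ pulled back along the inclusion, via the universal-algebra observation about reselecting generators in a countable presentation. All that remains for the corollary is the bookkeeping observation that ``$X$ quasi-Polish'' supplies exactly the hypotheses ``second-countable $T_0$'' on the ambient space, while ``$Y$ quasi-Polish'' is the instance ``$Y$ countably correlated'' needed to invoke that theorem — and, in the other direction, that \cref{thm:qpol-pi02} is stated for arbitrary quasi-Polish ambient $X$.
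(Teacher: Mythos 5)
Your proposal is correct and is exactly the derivation the paper intends: the corollary is stated with an immediate \qed because the forward direction is \cref{thm:qpol-sub-pi02} applied with $X$ second-countable $T_0$ and $Y$ countably correlated, and the reverse direction is \cref{thm:qpol-pi02}. Nothing further is needed.
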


\begin{corollary}
\label{thm:qpol=2cnt+ccor}
A space $X$ is quasi-Polish iff it is second-countable and countably correlated.
\end{corollary}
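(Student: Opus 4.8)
The plan is to deduce this directly from \cref{thm:qpol-sub-pi02}. For the forward implication, suppose $X$ is quasi-Polish, so that $X$ is a homeomorphic copy of a $\*\Pi^0_2$ subspace of $\#S^\#N$ (equivalently of $\#S^I$ for some countable $I$). Such a space is in particular countably correlated, being a $\*\Pi^0_2$ subspace of $\#S^I$ for an arbitrary---here, countable---index set; and it is second-countable, since $\#S^\#N$ is second-countable and second-countability passes to subspaces. This direction requires nothing beyond unwinding the definitions.

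For the converse, suppose $X$ is second-countable and countably correlated. Being homeomorphic to a subspace of some $\#S^I$, the space $X$ is $T_0$, so I may fix a countable subbasis $\@U$ of $X$ and form the canonical embedding $e_\@U : X \hookrightarrow \#S^\@U$, which realizes $X$ as a subspace of $\#S^\@U$ with $\@U$ countable. Now I would apply \cref{thm:qpol-sub-pi02} to the second-countable $T_0$ space $\#S^\@U$ and its subspace $e_\@U(X)$: the latter is countably correlated (being homeomorphic to $X$, and this property is clearly a homeomorphism invariant), so the theorem yields that $e_\@U(X) \subseteq \#S^\@U$ is $\*\Pi^0_2$. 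Hence $X$ is a homeomorphic copy of a $\*\Pi^0_2$ subspace of $\#S^\@U$ with $\@U$ countable, i.e.\ $X$ is quasi-Polish. (One could equally phrase the last step through \cref{thm:qpol-sub=pi02}, after noting that $\#S^\@U$ is itself quasi-Polish for countable $\@U$.)

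I do not expect any serious obstacle here, since all the substantive work is already contained in \cref{thm:qpol-sub-pi02}; this corollary is essentially a repackaging of that theorem with $X$ replaced by $\#S^\@U$. The only points needing a moment's care are (i) recording that a countably correlated space is automatically $T_0$, so that it genuinely embeds into $\#S^\@U$ via a countable subbasis, and (ii) recording that ``countably correlated'' is a homeomorphism invariant, so that $e_\@U(X)$ is a legitimate countably correlated subspace of $\#S^\@U$ to which \cref{thm:qpol-sub-pi02} may be applied.
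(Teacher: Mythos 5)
Your proposal is correct and matches the paper's proof essentially verbatim: the paper likewise takes a countable subbasis $\@U$, forms the canonical embedding $e_\@U : X \to \#S^\@U$, and applies \cref{thm:qpol-sub-pi02} to the second-countable $T_0$ ambient space $\#S^\@U$ to conclude that $e_\@U(X)$ is $\*\Pi^0_2$. The two points of care you flag ($T_0$-ness and homeomorphism invariance of being countably correlated) are left implicit in the paper but are exactly right.
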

\begin{proof}
If $X$ is second-countable and countably correlated, then letting $\@U$ be a countable subbasis, the canonical embedding $e_\@U : X -> \#S^\@U$ (see \cref{sec:defns}) has $\*\Pi^0_2$ image by \cref{thm:qpol-sub-pi02}.
\end{proof}

\section{Polish spaces}
\label{sec:polish}

\begin{theorem}[{\cite{deB}\footnote{%
Since \cite{deB} defines quasi-Polish spaces in terms of complete quasi-metrics, which generalize complete metrics, \cref{thm:pol-qpol} is trivial according to the definitions in \cite{deB}.  In fact, the content of \cref{thm:pol-qpol} is contained in the proofs of \cite[Theorems~19--21]{deB} (which establish that their definition of quasi-Polish space implies ours).%
}}]
\label{thm:pol-qpol}
Polish spaces are quasi-Polish.
\end{theorem}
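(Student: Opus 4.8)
I would prove this by reducing to the single concrete case of the unit interval $[0,1]$ and then bootstrapping with closure properties already established. First, $[0,1]$ is quasi-Polish: let $Q \subseteq [0,1]$ be the set of rationals and define $e : [0,1] \to \#S^Q \times \#S^Q$ by $e(x) = (\{q \in Q : q < x\},\, \{q \in Q : q > x\})$. The preimages under $e$ of the subbasic open sets of $\#S^Q \times \#S^Q$ are the rays $(q, 1]$ and $[0, q)$ for $q \in Q$; since these form a subbasis for the usual topology on $[0,1]$ and $e$ is injective, $e$ is an embedding (cf.\ the canonical embedding of \cref{sec:defns}). It then remains to give a $\*\Pi^0_2$ definition of the image $e([0,1]) \subseteq \#S^Q \times \#S^Q$, whose points we regard as pairs $(L, U)$ of subsets of $Q$. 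I claim $e([0,1])$ is exactly the set of $(L, U)$ such that (i)~$L$ is downward-closed and $U$ is upward-closed; (ii)~$L \cap U = \emptyset$; (iii)~$L$ has no greatest element and $U$ no least element; (iv)~for all $q < q'$ in $Q$, $q \in L$ or $q' \in U$. Each of (i)--(iv) is visibly a countable intersection of sets of the form $\neg A \cup B$ with $A, B$ open in $\#S^Q \times \#S^Q$ (for (ii) even a closed set, for (iv) a $G_\delta$), hence is $\*\Pi^0_2$; and a countable intersection of $\*\Pi^0_2$ sets is $\*\Pi^0_2$, so the set of $(L,U)$ satisfying (i)--(iv) is $\*\Pi^0_2$. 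That every $e(x)$ satisfies (i)--(iv) is immediate, and conversely a routine argument recovers from any $(L,U)$ satisfying (i)--(iv) the point $x := \sup L = \inf U$ with $e(x) = (L, U)$. So $[0,1]$ is homeomorphic to a $\*\Pi^0_2$ subspace of $\#S^Q \times \#S^Q \cong \#S^{\#N}$, and hence is quasi-Polish.

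For the general case: by \cref{thm:qpol-prod} the Hilbert cube $[0,1]^{\#N}$ is quasi-Polish. By the classical theory of Polish spaces, every Polish space $X$ is homeomorphic to a $G_\delta$ subspace of $[0,1]^{\#N}$ (it embeds into the Hilbert cube, and a completely metrizable subspace of a metrizable space is $G_\delta$; cf.\ \cite[3.11]{Kec}). Finally, any classical $G_\delta$ set $\bigcap_n V_n$ in a space $Y$ equals $\bigcap_n(\neg Y \cup V_n)$, hence is $\*\Pi^0_2$ in the sense of \cref{sec:defns}. Thus $X$ is homeomorphic to a $\*\Pi^0_2$ subspace of a quasi-Polish space, and therefore is quasi-Polish by \cref{thm:qpol-sub=pi02}.

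The only point requiring real care is the sufficiency direction in the first step -- that (i)--(iv) force $(L, U) = e(x)$. The crucial clauses are those in (iii): without the ``no greatest / no least element'' requirements one recovers only the lower and upper topologies on $[0,1]$ rather than its usual topology, so these are exactly what makes $e$ open onto its image; in the verification they are used, together with density of $Q$, to show $\sup L = \inf U$ and that this common value $x$ satisfies $L = \{q \in Q : q < x\}$ and $U = \{q \in Q : q > x\}$ exactly. (Alternatively, one could avoid the Hilbert cube by embedding a Polish space $X$ with complete metric $d$ and dense sequence $(x_n)_n$ directly into $\#S^{\#N \times \mathbb{Q}^+}$ via $x \mapsto \{(n,q) : d(x, x_n) < q\}$, the $\*\Pi^0_2$ conditions then encoding the triangle inequality and the completeness of $d$; but the interval version above is shorter to verify.)
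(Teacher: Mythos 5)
Your proof is correct, and it takes a genuinely different route from the paper in its second half. The paper gives two proofs: Proof~1 first shows $\#R$ is quasi-Polish via Dedekind cuts in $\#S^\#Q \times \#S^\#Q$ (essentially your first step, and your conditions (i)--(iv) are a correct adaptation to $[0,1]$, with (iv) correctly doing the work of excluding $(\emptyset,\emptyset)$ and (iii) pinning down the endpoints), and then embeds a general Polish space $X$ into $\#R^D$ via Kat\v{e}tov functions $x \mapsto d(x,-)$ for a countable dense $D$ --- which is exactly the alternative you sketch in your final parenthetical; Proof~2 instead copresents $X$ directly from a countable basis using Cauchy filters. Where you diverge is in outsourcing the general case to the classical facts that every Polish space embeds in the Hilbert cube and that completely metrizable subspaces of metrizable spaces are $G_\delta$ \cite[3.11, 4.14]{Kec}, then observing that $G_\delta$ sets are $\*\Pi^0_2$ in Selivanov's sense and invoking \cref{thm:qpol-prod} and \cref{thm:qpol-pi02} (you cite \cref{thm:qpol-sub=pi02}, but only its easy direction is needed, so there is no circularity). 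What your route buys is brevity for a reader who already knows classical descriptive set theory; what it gives up is self-containedness and explicitness --- the paper's proofs each produce a concrete countable copresentation of $X$ itself (from a dense set or from a basis), which is put to use later, whereas yours only exhibits $X$ abstractly as a $\*\Pi^0_2$ subspace of $[0,1]^\#N$ via an unspecified Urysohn-type embedding.
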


\begin{proof}[Proof 1]
First, we note

\begin{lemma}
\label{thm:R-qpol}
$\#R$ is quasi-Polish.
\end{lemma}
\begin{proof}
We have
\begin{align*}
\#R &\cong \{(A, B) \in \#S^\#Q \times \#S^\#Q \mid (A, B) \in \@P(\#Q) \times \@P(\#Q) \text{ is a Dedekind cut}\} \\
&= \left\{(A, B) \in \#S^\#Q \times \#S^\#Q \relmiddle| \begin{aligned}
&A \ne \emptyset \AND B \ne \emptyset \AND \\
&\forall p < q \in A\, (p \in A) \AND \forall p > q \in B\, (p \in B) \AND \\
&\forall p \in A\, \exists q > p\, (q \in A) \AND \forall p \in B\, \exists q < p\, (q \in B) \AND \\
&A \cap B = \emptyset \AND \\
&\forall p < q \in \#Q\, (p \in A \OR q \in B)
\end{aligned}\right\} \\
r &|-> (\{q \in \#Q \mid q < r\}, \{q \in \#Q \mid q > r\}).
\qedhere
\end{align*}
\end{proof}

Now let $X$ be a Polish space with compatible complete metric $d$ and $D \subseteq X$ be a countable dense subset.  Then using a standard construction of the completion of $D$,
\begin{align*}
X &\cong \{f \in \#R^D \mid f \text{ is a Katětov function} \AND \inf f = 0\} \\
&= \left\{f \in \#R^D \relmiddle| \begin{aligned}
&\forall x, y \in D\, (f(x) - f(y) \le d(x, y) \le f(x) + f(y)) \AND \\
&\forall n \ge 1\, \exists x \in D\, (f(x) < 1/n)
\end{aligned}\right\} \\
x &|-> d(x, -).
\qedhere
\end{align*}
\end{proof}

Sometimes it is useful to have a countable copresentation of a Polish space derived from a countable basis instead of a countable dense subset (as in the above proof).  This is provided by the following alternative proof, which is also more direct in that it avoids first showing that $\#R$ is quasi-Polish.

\begin{proof}[Proof 2]
Let $X$ be a Polish space with compatible complete metric $d$.  Let $\@U$ be a countable basis of open sets in $X$, closed under binary intersections (so containing $\emptyset$).  For $U \in \@U$ and $r > 0$, put
\begin{align*}
[U]_r := \{x \in X \mid \exists y \in U\, (d(x, y) < r)\},
\end{align*}
the $r$-neighborhood of $U$.  We claim that the canonical embedding $e_\@U : X -> \#S^\@U$ has image
\begin{align*}
e_\@U(X) = \left\{\@A \subseteq \@U \relmiddle| \begin{aligned}
&\emptyset \not\in \@A \AND \\
&\forall U, V \in \@U\, (U \cap V \in \@A \iff U, V \in \@A) \AND \\
&\forall n \ge 1\, \exists U \in \@A\, (\diam(U) < 1/n) \AND \\
&\forall U \in \@A\, \exists n \ge 1,\, V \in \@A\, ([V]_{1/n} \subseteq U)
\end{aligned}\right\}.  \tag{$*$}
\end{align*}
$\subseteq$ is straightforward.  To prove $\supseteq$, let $\@A$ belong to the right-hand side; we must find $x \in X$ such that $x \in U \iff U \in \@A$ for all $U \in \@U$.  By the first three conditions on the right-hand side ($*$), $\@A$ is a Cauchy filter base.  Let $x$ be its limit, i.e.,
\begin{align*}
\{x\} = \bigcap_{U \in \@A} \-U.
\end{align*}
For $U \in \@U$ such that $x \in U$, since $\@A$ is Cauchy, there is some $V \in \@A$ such that $V \subseteq U$, whence $U \in \@A$ by the second condition on the right-hand side ($*$).
Conversely, for $U \in \@A$, by the fourth condition on the right-hand side ($*$) there is some $n \ge 1$ and $V \in \@A$ with $[V]_{1/n} \subseteq U$, whence $x \in \-V \subseteq U$.
\end{proof}

\begin{corollary}
\label{thm:qpol-reg-pol}
A topological space $X$ is Polish iff it is quasi-Polish and regular.
\end{corollary}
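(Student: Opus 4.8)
The plan is to prove both directions of the equivalence. The forward direction—every Polish space is quasi-Polish and regular—is immediate: quasi-Polish follows from \cref{thm:pol-qpol}, and regularity (in fact metrizability) is part of the definition of Polish. So the real content is the converse: a regular quasi-Polish space is Polish.

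For the converse, the strategy is to show that a regular quasi-Polish space is second-countable and metrizable, then invoke the Urysohn metrization theorem together with completeness. A regular quasi-Polish space is in particular second-countable (quasi-Polish spaces embed in $\#S^\#N$, hence are second-countable), and a regular second-countable space is $T_3$ (one needs $T_1$ here, but a regular $T_0$ space is automatically $T_1$: if $x \le y$ with $x \ne y$, regularity separates the point $x$ from the closed set $\-{\{y\}}$, contradicting $x \in \-{\{y\}}$). By Urysohn, $X$ is metrizable and separable. It remains to exhibit a \emph{complete} compatible metric. For this I would use \cref{thm:qpol-sub=pi02}: since $X$ is now a metrizable quasi-Polish space, it is homeomorphic to a $\*\Pi^0_2$ subspace of $\#S^\#N$; but $X$ is also metrizable, so I want to transfer this to a $G_\delta$ condition inside a Polish space. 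The cleanest route: $X$ is homeomorphic to a subspace of some Polish space (e.g.\ the Hilbert cube, by Urysohn), and within that Polish ambient space $X$ is $\*\Pi^0_2$; in a metrizable space $\*\Pi^0_2 = G_\delta$ (the classical definitions coincide), so $X$ is $G_\delta$ in a Polish space, hence Polish by \cite[3.11]{Kec}.

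Let me make the middle step precise, since that is where the main obstacle lies: I must relate the abstract $\*\Pi^0_2$ definition of $X$ inside $\#S^\#N$ to a $\*\Pi^0_2$ (equivalently $G_\delta$) condition inside a \emph{metrizable} Polish space. One way: embed $X$ into the Hilbert cube $[0,1]^\#N =: H$ via Urysohn; then $X \subseteq H$ is a countably correlated subspace of the second-countable $T_0$ (indeed metrizable) space $H$, so by \cref{thm:qpol-sub-pi02} it is $\*\Pi^0_2$ in $H$. Now in the metrizable space $H$, the Selivanov $\*\Pi^0_2$ sets coincide with the classical $G_\delta$ sets—this is the remark in the text that ``for $\xi > 2$ we may take $A_n = X$'', and more directly, a set $\bigcap_n(\neg U_n \cup V_n)$ with $U_n, V_n$ open is $G_\delta$ when closed sets are $G_\delta$, which holds in any metrizable space since $\neg U_n$ is then $G_\delta$ and hence $\neg U_n \cup V_n$ is $G_\delta$; the converse inclusion $G_\delta \subseteq \*\Pi^0_2$ is trivial by taking $V_n = \emptyset$. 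Hence $X$ is $G_\delta$ in the Polish space $H$, so Polish.

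The main obstacle is thus ensuring that ``$\*\Pi^0_2$'' has been correctly identified with ``$G_\delta$'' in the metrizable setting, and wiring together the two characterizations (\cref{thm:qpol-sub-pi02} for embedding into $H$, and \cite[3.11]{Kec} for $G_\delta$ subspaces of Polish spaces). A secondary point to handle carefully is the deduction that a regular $T_0$ space is $T_1$, so that Urysohn applies; I would state this explicitly. Everything else—second-countability of quasi-Polish spaces, the Urysohn metrization theorem, the classical characterization of Polish subspaces—is either standard or already available in the excerpt.
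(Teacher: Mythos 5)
Your proof follows essentially the same route as the paper's: show that a regular quasi-Polish space is second-countable and $T_3$, apply the Urysohn metrization theorem to realize it inside a Polish ambient space (you use the Hilbert cube where the paper uses the completion of $X$ under a compatible metric; this makes no difference), observe that $X$ is $\*\Pi^0_2$ there by \cref{thm:qpol-sub-pi02} (equivalently \cref{thm:qpol-sub=pi02}), identify $\*\Pi^0_2$ with $G_\delta$ in the metrizable setting, and conclude by \cite[3.11]{Kec}. You are in fact more explicit than the paper about the $\*\Pi^0_2 = G_\delta$ step, which the paper leaves implicit, and your argument for it ($\neg U_n$ is $G_\delta$ in a metrizable space) is correct.

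One auxiliary step is wrong as written: your justification that a regular $T_0$ space is $T_1$. You propose to ``separate the point $x$ from the closed set $\-{\{y\}}$'' when $x \le y$ and $x \ne y$, but regularity only separates a point from a closed set \emph{not containing} it, and $x \le y$ means precisely $x \in \-{\{y\}}$, so regularity cannot be applied to that pair. The fact is true, but the correct argument runs the other way: if $x \le y$ and $x \ne y$, then by $T_0$ there is an open $U$ with $y \in U$ and $x \notin U$ (the other $T_0$ alternative is excluded by $x \le y$); applying regularity to the point $y$ and the closed set $\neg U \ni x$ yields disjoint open sets $V \ni y$ and $W \supseteq \neg U$, so $W$ is an open set containing $x$ but not $y$, contradicting $x \le y$. (A second, harmless slip: to see $G_\delta \subseteq \*\Pi^0_2$ you should take $U_n = X$ and $V_n$ the given open sets, not $V_n = \emptyset$; but that inclusion is not needed for your argument.) With these local repairs the proof is correct.
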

\begin{proof}
If $X$ is quasi-Polish and regular, then $X$ is second-countable and $T_3$, whence by the Urysohn metrization theorem, $X$ is metrizable; letting $\^X$ be a completion of $X$ with respect to a compatible metric, $\^X$ is Polish, and $X \subseteq \^X$ is $\*\Pi^0_2$ by \cref{thm:qpol-sub=pi02}, hence Polish.
\end{proof}

\section{Change of topology}

\begin{theorem}[{\cite[Theorem~73]{deB}}]
\label{thm:qpol-dissolv-delta02}
Let $X$ be a quasi-Polish space and $A_0, A_1, \dotsc \subseteq X$ be countably many $\*\Delta^0_2$ sets.  Then the space $X'$ given by $X$ with $A_0, A_1, \dotsc$ adjoined to its topology is quasi-Polish.  Similarly for countably correlated spaces.
\end{theorem}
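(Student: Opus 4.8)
The plan is to realize $X'$ as a $\*\Pi^0_2$ subspace of a product of $X$ with countably many copies of the Sierpinski space, one copy $\#S_n$ for each adjoined set $A_n$, using the coordinate in $\#S_n$ to record membership in $A_n$. Since each $A_n$ is $\*\Delta^0_2$ in $X$, both $A_n$ and its complement $X \setminus A_n$ are $\*\Sigma^0_2$, so each can be written as a countable union $\bigcup_k (P^n_k \setminus Q^n_k)$ of differences of open sets of $X$ — equivalently, the conditions ``$x \in A_n$'' and ``$x \notin A_n$'' are each expressible by a countable disjunction of conditions of the form ``$x \in U \wedge x \notin V$'' for open $U, V \subseteq X$, and the latter, ``$x \notin V$'', can in turn be encoded $\*\Pi^0_2$-style once we have the right ambient space. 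The key point is that in $X \times \prod_n \#S_n$, the statement ``the $n$th Sierpinski coordinate equals $1$'' is clopen, so we can use it freely on both sides of an implication.

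Concretely, first I would fix a countable copresentation of $X$, or just use \cref{thm:qpol-sub=pi02} and work with $X \subseteq \#S^\#N$ already $\*\Pi^0_2$; it suffices by \cref{thm:qpol-prod} and \cref{thm:qpol-pi02} to exhibit $X'$ as a $\*\Pi^0_2$ subspace of $\#S^\#N \times \#S^\#N$ (the second factor carrying the coordinates $a_n$). The underlying set of $X'$ is that of $X$; I claim
\begin{align*}
X' \cong \Bigl\{(x, (a_n)_n) \in X \times \textstyle\prod_n \#S \relmiddle| \forall n\, \bigl(a_n = 1 \iff x \in A_n\bigr)\Bigr\}
\end{align*}
via $x \mapsto (x, (\mathbb{1}_{A_n}(x))_n)$. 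That this bijection is a homeomorphism onto the right-hand side with its subspace topology from $X' \times \prod_n \#S$ (not from $X \times \prod_n \#S$!) is the crux: the topology of $X'$ is generated by the open sets of $X$ together with the $A_n$, and on the displayed set the set $A_n$ pulls back to the clopen slice $\{a_n = 1\}$, so the subspace topology matches $X'$. One then checks that the right-hand side is $\*\Pi^0_2$ in $X' \times \prod_n \#S$ — but $X'$ is exactly what we are trying to show is quasi-Polish, so this is circular unless we instead bound things inside a space already known to be quasi-Polish.

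To break the circularity I would instead take the ambient space to be $X \times \prod_n \#S$ (which is quasi-Polish by \cref{thm:qpol-prod}) and show the displayed subset is $\*\Pi^0_2$ \emph{there}, then separately argue that the subspace topology it inherits coincides with (the homeomorphic image of) $X'$. For the $\*\Pi^0_2$ claim: write $A_n = \bigcup_k (U^n_k \setminus V^n_k)$ and $X \setminus A_n = \bigcup_k (U'^n_k \setminus V'^n_k)$ with all $U, V$ open in $X$; then ``$a_n = 1 \implies x \in A_n$'' becomes, using that $\{a_n = 1\}$ is open and the difference-of-opens form, a $\*\Sigma^0_2$-in-the-hypothesis condition that can be massaged into $\*\Pi^0_2$ form (pair $a_n = 1$ with the complement of each $V^n_k$ appropriately), and symmetrically ``$a_n = 0 \implies x \notin A_n$''; intersecting over all $n$ and $k$ keeps us $\*\Pi^0_2$. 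For the topology-matching step, observe that a map into $X'$ is continuous iff its composites with the open sets of $X$ and with the characteristic functions of the $A_n$ are lower semicontinuous; under the bijection, the former is controlled by the $X$-coordinate and the latter by the $\#S$-coordinates, and on our subset the value of $\mathbb{1}_{A_n}$ is literally the $n$th Sierpinski coordinate, so continuity transfers in both directions. I expect the main obstacle to be exactly this last verification — making precise that adjoining the $A_n$ to the topology is ``the same as'' recording them in independent Sierpinski coordinates, i.e., that no open set of $X'$ is lost or spuriously gained — and, on the $\*\Pi^0_2$-coding side, being careful that ``$x \notin V$'' for $V$ open in $X$ is not itself open, so it must appear only as the hypothesis of an implication or be absorbed using the complementary $\*\Sigma^0_2$ description of $X \setminus A_n$ rather than naively.
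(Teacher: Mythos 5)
Your proposal is correct and is essentially the paper's own proof: the paper exhibits $X'$ as the $\*\Pi^0_2$ subset $\{(x,(a_n)_n) \in X \times \#S^\#N \mid \forall n\,(x \in A_n \iff a_n = 1)\}$ via exactly the map $x \mapsto (x,(\mathbb{1}_{A_n}(x))_n)$, with the ambient space $X \times \#S^\#N$ as in your de-circularized version. The details you flag as delicate (using the $\*\Sigma^0_2$ descriptions of both $A_n$ and $\neg A_n$ to make each implication $\*\Pi^0_2$, and matching the subspace topology with that of $X'$) are handled correctly and are exactly what the paper leaves implicit.
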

\begin{proof}
We have
\begin{align*}
X' &\cong \{(x, (a_n)_n) \in X \times \#S^\#N \mid \forall n\, (x \in A_n \iff a_n = 1)\} \\
x &|-> (x, (1 \text{ if $x \in A_n$, else } 0)_n).
\qedhere
\end{align*}
\end{proof}

\begin{remark}
As noted in \cite[paragraph before Lemma~72]{deB}, given a Polish space, adjoining $\*\Delta^0_2$ sets which are not closed might result in a non-metrizable space.
\end{remark}

We also have a converse to \cref{thm:qpol-dissolv-delta02} in the case of a single set:

\begin{proposition}
Let $X$ be a quasi-Polish space and $A \subseteq X$ be such that the space $X'$ given by $X$ with $A$ adjoined to its topology is quasi-Polish.  Then $A \subseteq X$ is $\*\Delta^0_2$.
\end{proposition}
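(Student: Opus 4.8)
The plan is to realize $X'$ as a $\*\Pi^0_2$ subspace of the quasi-Polish space $X \times \#S$, and then to recover both $A$ and $X \setminus A$ as preimages of that subspace under the two constant sections of the projection $X \times \#S \to X$. Concretely, I would consider the map $f \colon X' \to X \times \#S$ sending $x$ to $(x,1)$ if $x \in A$ and to $(x,0)$ otherwise, i.e., $f(x) = (x, \chi_A(x))$. This map is injective, and it is continuous, since $f^{-1}(U \times \#S) = U$ is open in $X$ (hence in $X'$) for every open $U \subseteq X$, while $f^{-1}(X \times \{1\}) = A$ is open in $X'$. The first step is to check that $f$ is moreover a topological embedding: the subbasic open sets $(U \times \#S) \cap f(X') = f(U)$ and $(X \times \{1\}) \cap f(X') = f(A)$ of the subspace topology on $f(X')$ pull back along $f$ to precisely the subbasis $\{U : U \text{ open in } X\} \cup \{A\}$ defining $X'$. (This is the computation in the proof of \cref{thm:qpol-dissolv-delta02} read in reverse; note $f(X') = \{(x,a) \in X \times \#S : x \in A \iff a = 1\}$.)

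Given this, since $X \times \#S$ is quasi-Polish by \cref{thm:qpol-prod} ($\#S$ being obviously quasi-Polish) and $f(X') \cong X'$ is quasi-Polish by hypothesis, \cref{thm:qpol-sub=pi02} shows that $f(X')$ is $\*\Pi^0_2$ in $X \times \#S$. Now take the two sections $s_0, s_1 \colon X \to X \times \#S$, $s_i(x) := (x,i)$; both are continuous (for $s_0$: $s_0^{-1}(U \times \#S) = U$ and $s_0^{-1}(X \times \{1\}) = \emptyset$; for $s_1$: $s_1^{-1}(U \times \#S) = s_1^{-1}(X \times \{1\}) = U$). Since the preimage of a $\*\Pi^0_2$ set under a continuous map is again $\*\Pi^0_2$, and $s_1^{-1}(f(X')) = A$ while $s_0^{-1}(f(X')) = X \setminus A$, we conclude that $A$ and $X \setminus A$ are both $\*\Pi^0_2$ in $X$, i.e., $A$ is $\*\Delta^0_2$.

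I do not anticipate a real obstacle beyond carefully keeping track of the two topologies on the common underlying set. The one point worth isolating is that a \emph{single} $\*\Pi^0_2$ subset of $X \times \#S$ encodes $A$ and $X \setminus A$ at once, via $s_1$ and $s_0$ respectively, and it is exactly this symmetry supplied by the extra $\#S$ factor that upgrades the conclusion from ``$A$ is $\*\Pi^0_2$'' to ``$A$ is $\*\Delta^0_2$''.
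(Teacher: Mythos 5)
Your proposal is correct and is essentially the paper's own argument: the paper likewise uses the embedding $e(x) = (x,\chi_A(x))$ into $X \times \#S$, invokes \cref{thm:qpol-sub=pi02} to see that $e(X')$ is $\*\Pi^0_2$, and reads off $A$ and $\neg A$ as the slices $\{x \mid (x,1) \in e(X')\}$ and $\{x \mid (x,0) \in e(X')\}$, which is exactly your pullback along the two sections. The only difference is that you spell out the embedding and continuity checks in more detail.
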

\begin{proof}
Consider the embedding $e : X' -> X \times \#S$ from the proof of \cref{thm:qpol-dissolv-delta02}.  Since $X'$ is quasi-Polish, $e(X') \subseteq X \times \#S$ is $\*\Pi^0_2$ by \cref{thm:qpol-sub=pi02}.  Thus $A = \{x \in X \mid (x, 1) \in e(X')\}$ is $\*\Pi^0_2$, as is $\neg A = \{x \in X \mid (x, 0) \in e(X')\}$.
\end{proof}

\begin{lemma}[{\cite[Lemma~72]{deB}}]
\label{thm:qpol-dissolv-invlim}
Let $X$ be a quasi-Polish space and $\tau_0, \tau_1, \dotsc$ be finer quasi-Polish topologies on $X$.  Then the topology $\tau$ generated by $\tau_0, \tau_1, \dotsc$ is quasi-Polish.
\end{lemma}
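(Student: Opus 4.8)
The plan is to realize $(X,\tau)$ as the diagonal inside the countable product $P := \prod_{n} (X,\tau_n)$, which is quasi-Polish by \cref{thm:qpol-prod}, and then to verify that this diagonal is $\*\Pi^0_2$ in $P$, so that $(X,\tau)$ is quasi-Polish by \cref{thm:qpol-pi02}. Write $\sigma$ for the quasi-Polish topology that $X$ already carries; since each $\tau_n$ is finer than $\sigma$, so is $\tau$, and $(X,\sigma)$ is in particular second-countable and $T_0$.

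First I would check the homeomorphism $(X,\tau) \cong D$, where $D := \{(x_n)_n \in P \mid \forall n\,(x_n = x_0)\}$, via $x \mapsto (x,x,\dotsc)$. This is a continuous bijection onto $D$ because each of its coordinates is the underlying-set identity $(X,\tau) \to (X,\tau_n)$, which is continuous as $\tau \supseteq \tau_n$. It is a homeomorphism because, for $W \in \tau_n$, the image of $W$ under this map is $p_n^{-1}(W) \cap D$, and, as $n$ and $W$ vary, these sets form a subbasis for the subspace topology on $D$ while their preimages $W$ form a subbasis for $\tau$.

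The crux is that $D$ is $\*\Pi^0_2$ in $P$. Write $D = \bigcap_{n \ge 1} D_n$ with $D_n := \{(x_m)_m \in P \mid x_0 = x_n\}$; since a countable intersection of $\*\Pi^0_2$ sets is $\*\Pi^0_2$, it suffices to treat each $D_n$. Here I would exploit the common coarsening $\sigma$: the map $g_n : P \to (X,\sigma)^2$ sending $(x_m)_m$ to $(x_0,x_n)$ is continuous, being the projection of $P$ onto coordinates $0$ and $n$ followed by the identity $(X,\tau_i) \to (X,\sigma)$ in each factor. Then $D_n = g_n^{-1}(\Delta)$, where $\Delta := \{(a,b) \in (X,\sigma)^2 \mid a = b\}$ is the equality relation on $(X,\sigma)$, which is $\*\Pi^0_2$ by the fact, noted above, that the specialization preorder, and hence equality, on a second-countable $T_0$ space is $\*\Pi^0_2$. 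Since $g_n^{-1}$ takes open sets to open sets and commutes with complements and countable intersections, it takes $\*\Pi^0_2$ sets to $\*\Pi^0_2$ sets, so each $D_n$, and therefore $D$, is $\*\Pi^0_2$ in $P$.

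The only step that needs genuine care is the passage to $\sigma$ in the previous paragraph: the condition ``$x_0 = x_n$'' compares two coordinates carrying \emph{different} topologies, and it is precisely continuity of the identity maps $(X,\tau_n) \to (X,\sigma)$ that lets us rewrite it as the preimage of the equality relation on a single second-countable $T_0$ space, where it is known to be $\*\Pi^0_2$. Everything else — the product being quasi-Polish, the identification with the diagonal, and the permanence of $\*\Pi^0_2$ under continuous preimages and countable intersections — is routine given the results already established.
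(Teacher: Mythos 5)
Your proof is correct and takes essentially the same route as the paper, whose one-line argument realizes $(X,\tau)$ as the diagonal $\{(x,(x_i)_i) \in X \times \prod_i (X,\tau_i) \mid \forall i\,(x = x_i)\}$ --- differing from yours only in retaining the original copy of $X$ as an extra factor rather than comparing every coordinate to coordinate $0$. The details you spell out (the homeomorphism onto the diagonal, and the $\*\Pi^0_2$-ness of each equality condition obtained by pulling back the equality relation on the common coarsening, which is $\*\Pi^0_2$ since that space is second-countable and $T_0$) are precisely what the paper leaves implicit.
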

\begin{proof}
We have
\begin{align*}
(X, \tau) &\cong \{(x, (x_i)_i) \in X \times \prod_i (X, \tau_i) \mid \forall i\, (x = x_i)\} \\
x &|-> (x, (x)_i).
\qedhere
\end{align*}
\end{proof}

\begin{theorem}[{\cite[Theorem~74]{deB}}]
Let $X$ be a quasi-Polish space and $A_0, A_1, \dotsc \in \*\Sigma^0_\xi(X)$.  Then there is a finer quasi-Polish topology on $X$ containing each $A_i$ and contained in $\*\Sigma^0_\xi(X)$.
\end{theorem}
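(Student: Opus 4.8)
The plan is to argue by transfinite induction on $\xi$. The base case $\xi = 1$ is immediate: the $A_i$ are then already open, so the given topology works. So fix $\xi \ge 2$, assume the result for all smaller ordinals, and write out the definition of $\*\Sigma^0_\xi$: each $A_i = \bigcup_n (A_{i,n} \setminus B_{i,n})$ with $A_{i,n}, B_{i,n} \in \*\Sigma^0_{\zeta_{i,n}}(X)$ for suitable $\zeta_{i,n} < \xi$.

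The first step is to pass to a finer quasi-Polish topology in which all the pieces $A_{i,n}, B_{i,n}$ have been turned into open sets. For each pair $(i,n)$, the inductive hypothesis applied to the two-element family $\{A_{i,n}, B_{i,n}\} \subseteq \*\Sigma^0_{\zeta_{i,n}}(X)$ gives a finer quasi-Polish topology $\rho_{i,n}$ with $A_{i,n}, B_{i,n}$ both $\rho_{i,n}$-open and $\rho_{i,n} \subseteq \*\Sigma^0_{\zeta_{i,n}}(X)$. By \cref{thm:qpol-dissolv-invlim}, the topology $\rho$ generated by the original topology together with all the $\rho_{i,n}$ is quasi-Polish, and by construction each $A_{i,n}$ and $B_{i,n}$ is $\rho$-open. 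Here one checks that $\rho \subseteq \*\Sigma^0_\xi(X)$: picking a countable base of the original topology and of each $\rho_{i,n}$ gives a countable subbasis of $\rho$ each member of which lies in $\*\Sigma^0_\zeta(X)$ for some $\zeta < \xi$; every $\rho$-open set is then a countable union of finite intersections of subbasic sets, a finite intersection of finitely many sets of levels $< \xi$ lies in $\*\Sigma^0_\zeta(X)$ for the largest level $\zeta < \xi$ appearing, and a countable union of such lies in $\*\Sigma^0_\xi(X)$ directly by definition.

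The second step turns the $A_i$ themselves into open sets. In $(X, \rho)$ each $A_{i,n} \setminus B_{i,n}$ is a difference of two open sets, hence locally closed, hence $\*\Delta^0_2(X,\rho)$; so \cref{thm:qpol-dissolv-delta02}, applied to the quasi-Polish space $(X,\rho)$ and the countable family $\{A_{i,n}\setminus B_{i,n}\}_{i,n}$, produces a finer quasi-Polish topology $\tau'$ in which every $A_{i,n}\setminus B_{i,n}$ — and hence every $A_i = \bigcup_n (A_{i,n}\setminus B_{i,n})$ — is open. Finally $\tau' \subseteq \*\Sigma^0_\xi(X)$: $\tau'$ has a countable subbasis consisting of a base of $\rho$ (contained in $\*\Sigma^0_\xi(X)$ by the first step) together with the sets $A_{i,n}\setminus B_{i,n}$ (each in $\*\Sigma^0_\xi(X)$ directly from the definition, since $A_{i,n}, B_{i,n} \in \*\Sigma^0_{\zeta_{i,n}}(X)$ with $\zeta_{i,n} < \xi$), and $\*\Sigma^0_\xi(X)$ is closed under finite intersections and countable unions. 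This $\tau'$ is the topology sought.

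The one point needing care is that \cref{thm:qpol-dissolv-delta02} cannot be applied directly, because $A_{i,n}\setminus B_{i,n}$ is generally not $\*\Delta^0_2$ in the original topology; the whole purpose of the induction is to first refine the topology so that these differences become $\*\Delta^0_2$. Everything else is routine bookkeeping with Borel classes — the monotonicity $\*\Sigma^0_\zeta \subseteq \*\Sigma^0_\xi$ for $\zeta \le \xi$ and the closure of each $\*\Sigma^0_\xi$ under finite intersections and countable unions, all easy by induction from the definition — and in particular no separate treatment of successor and limit $\xi$ is required, since $A_{i,n}\setminus B_{i,n} \in \*\Sigma^0_\xi(X)$ follows from $\zeta_{i,n} < \xi$ regardless.
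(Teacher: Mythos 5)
Your proof is correct and takes essentially the same route as the paper's: induct on $\xi$, use the inductive hypothesis to refine the topology so that the pieces $A_{i,n}, B_{i,n}$ become open, observe that each difference $A_{i,n} \setminus B_{i,n}$ is then $\*\Delta^0_2$ in the refined topology, and conclude via \cref{thm:qpol-dissolv-delta02} and \cref{thm:qpol-dissolv-invlim}. The only cosmetic difference is that the paper first reduces to a single $A$ using \cref{thm:qpol-dissolv-invlim}, whereas you carry the whole countable family through the argument at once.
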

\begin{proof}
By \cref{thm:qpol-dissolv-invlim} it suffices to consider the case of a single $A \in \*\Sigma^0_\xi(X)$.  We induct on $\xi$.  The case $\xi = 1$ is trivial, so assume $\xi > 1$.  Write $A = \bigcup_i (B_i \setminus C_i)$ where $B_i, C_i \in \*\Sigma^0_{\zeta_i}(X)$ for $\zeta_i < \xi$.  By the induction hypothesis, there are finer quasi-Polish topologies $\tau_i \subseteq \*\Sigma^0_{\zeta_i}(X)$ such that $B_i, C_i \in \tau_i$.  Then each $B_i \setminus C_i \in \*\Delta^0_2(X, \tau_i)$, so by \cref{thm:qpol-dissolv-delta02}, the topology $\tau_i'$ generated by $\tau_i$ and $B_i \setminus C_i$ is quasi-Polish.  Now by \cref{thm:qpol-dissolv-invlim}, the topology $\tau$ generated by the $\tau_i'$ is quasi-Polish.  Clearly $\tau_i' \subseteq \*\Delta^0_{\zeta_i+1}(X) \subseteq \*\Sigma^0_\xi(X)$, whence $\tau \subseteq \*\Sigma^0_\xi(X)$; and $A \in \tau$.
\end{proof}

\section{Baire category}
\label{sec:baire}

Recall \cite[\S8]{Kec} that a topological space $X$ is \defn{Baire} if the intersection of countably many dense open sets in $X$ is dense; and that a subset $A \subseteq X$ is \defn{comeager} if it contains a countable intersection of dense open sets, \defn{meager} if its complement is comeager, and \defn{Baire-measurable} (or has the \defn{Baire property}) if it differs from an open set by a meager set.

In the non-metrizable setting, it is useful to note the following:

\begin{proposition}
\label{thm:dense-pi02-gdelta}
Let $X$ be a topological space, $G \subseteq X$ be a dense $\*\Pi^0_2$ subset.  Then $G$ is comeager.  Thus, $A \subseteq X$ is comeager iff it contains a countable intersection of dense $\*\Pi^0_2$ sets.
\end{proposition}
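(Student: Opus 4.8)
The plan is to reduce the statement to the key claim that a dense $\*\Pi^0_2$ set $G = \bigcap_n (\neg U_n \cup V_n)$ (for $U_n, V_n$ open) is comeager, and then deduce the ``iff'' by combining with the standard fact that a countable intersection of comeager sets is comeager. For the key claim, I would write $G = \bigcap_n G_n$ where $G_n := \neg U_n \cup V_n$, and show that each $G_n$, while not open, is dense and has the form (open) $\cup$ (closed nowhere dense complement boundary), so that each $G_n$ is comeager; a countable intersection of comeager sets is then comeager, giving the result.

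More concretely, the main step is: if $U, V$ are open and $\neg U \cup V$ is dense in $X$, then $\neg U \cup V$ is comeager. Here I would argue that $\neg U \cup V = \neg(U \setminus V)$ and that $U \setminus V = U \cap \neg V$ is \emph{locally closed} (open intersect closed), hence its boundary is closed with empty interior. The point is that $U \setminus V$ is open in its closure $\-{U \setminus V}$, so $\-{U \setminus V} \setminus (U \setminus V)$ is closed and nowhere dense; meanwhile $\neg \-{U \setminus V}$ is open. Thus $\neg (U \setminus V) = \neg\-{U\setminus V} \,\sqcup\, (\-{U\setminus V}\setminus (U\setminus V))$ differs from the open set $\neg\-{U\setminus V}$ by a nowhere dense (hence meager) set, i.e.\ $\neg(U\setminus V)$ has the Baire property; since it is also dense, it must be comeager (a dense set with the Baire property is comeager). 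Applying this to each $G_n$ and intersecting gives that $G$ is comeager.

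For the ``iff'': the forward direction (comeager $\Rightarrow$ contains a countable intersection of dense $\*\Pi^0_2$ sets) is immediate since dense open sets are in particular dense $\*\Pi^0_2$ sets. For the reverse direction, if $A \supseteq \bigcap_k G_k$ with each $G_k$ dense $\*\Pi^0_2$, then each $G_k$ is comeager by the key claim, so $\bigcap_k G_k$ is comeager (countable intersection of comeager sets), hence so is $A$.

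The main obstacle is being careful about what ``comeager'' requires in a space that need not be Baire: I only want to use the definition (``contains a countable intersection of dense open sets''), never that $X$ itself is Baire. The one genuinely load-bearing point is the little lemma that a dense set with the Baire property is comeager — this follows since if $D$ is dense and $D = W \,\triangle\, M$ with $W$ open and $M$ meager, then density of $D$ forces $W$ to be dense (as $D \setminus M \subseteq W$ and $D\setminus M$ is dense, $M$ being meager hence with dense complement), and then $D \supseteq W \setminus M$ is a dense open set minus a meager set, hence comeager. I should make sure density of $D\setminus M$ is justified without invoking the Baire property of $X$: $D$ dense and $M$ meager means $M \subseteq \bigcup_n F_n$ with $F_n$ closed nowhere dense, so $D \setminus M \supseteq D \cap \bigcap_n \neg F_n$, which is a countable intersection of dense open sets intersected with the dense set $D$ — but density of that intersection is exactly a Baire-type statement. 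I'll instead argue directly that $D \supseteq W \cap \bigcap_n \neg F_n$ is a countable intersection of dense open sets (noting $W$ is dense since $W \supseteq D \setminus M \supseteq D \cap \bigcap_n \neg F_n$ and any nonempty open $O$ meets $D$ hence — after shrinking inside $\bigcap_{n\le N}\neg F_n$ by nowhere-denseness — meets $D \setminus M$ hence $W$), so $D$ is comeager by definition.
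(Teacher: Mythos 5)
Your overall strategy is the same as the paper's: inside each dense set $G_n = \neg U_n \cup V_n$ you locate a dense open set and then intersect over $n$. (The paper uses $(\neg U_n)^\circ \cup V_n$; your $\neg\-{U_n \setminus V_n} = (\neg U_n \cup V_n)^\circ$ contains it, and the observation that a locally closed set has closed nowhere dense boundary is a perfectly good way to identify this interior.) The genuine problem is the auxiliary lemma you single out as ``the one genuinely load-bearing point'': \emph{a dense set with the Baire property is comeager} is false in an arbitrary topological space, indeed even in a Baire space. Take $X = \#R$ and $D = \#Q = \emptyset \triangle \#Q$: this is dense and Baire-measurable but meager, hence not comeager. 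Your own attempted proof of the lemma breaks exactly where you sensed it would: knowing that $O \cap \bigcap_{n \le N} \neg F_n$ meets $D$ for each \emph{finite} $N$ says nothing about $O$ meeting $D \setminus M$, since a point avoiding $F_0, \dotsc, F_N$ may well lie in $F_{N+1} \subseteq M$; passing to all $n$ at once is precisely a Baire-category assertion about $X$ that you are not entitled to. (For the same reason, ``$M$ meager hence with dense complement'' fails in general, e.g.\ $M = X = \#Q$.)

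The repair is to not generalize: in your application the correction term $M = \-{U \setminus V} \setminus (U \setminus V)$ is a \emph{single} closed nowhere dense set disjoint from $W = \neg\-{U \setminus V}$, and for such $D = W \sqcup M$ density of $D$ really does force density of $W$: for nonempty open $O$, the set $O \cap \neg M$ is nonempty open (as $\neg M$ is dense open), hence meets $D$, and $D \cap \neg M = W$. Then $G_n \supseteq W_n$ with $W_n$ dense open, so $G \supseteq \bigcap_n W_n$ is comeager by definition, with no appeal to any form of the false lemma. So state and use the density claim only for ``open $\sqcup$ one nowhere dense set'' (or at most a finite union), not for arbitrary Baire-measurable sets. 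With that change your argument is correct and is essentially the paper's proof in slightly different clothing; the ``iff'' clause is then handled identically in both.
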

\begin{proof}
Let $G = \bigcap_n (\neg U_n \cup V_n)$ where $U_n, V_n \subseteq X$ are open.  Since $G$ is dense, so is each $\neg U_n \cup V_n$, i.e., $X = \-{\neg U_n \cup V_n} = \-{\neg U_n} \cup \-{V_n} = \neg U_n \cup \-{V_n}$; since $\-{V_n}$ is closed, this implies $X = (\neg U_n)^\circ \cup \-{V_n} \subseteq \-{(\neg U_n)^\circ \cup V_n}$.  So the $(\neg U_n)^\circ \cup V_n \subseteq X$ are dense open sets whose intersection is contained in $G$.
\end{proof}

A space $X$ is \defn{completely Baire} if every closed subspace $Y \subseteq X$ is Baire.

\begin{proposition}[see {\cite[4.1]{dB2}}]
\label{thm:herebaire}
Let $X$ be a topological space.  The following are equivalent:
\begin{enumerate}
\item[(i)]  Every $\*\Pi^0_2$ subspace $Y \subseteq X$ is Baire.
\item[(ii)]  $X$ is completely Baire.
\item[(iii)]  Every nonempty closed $F \subseteq X$ is non-meager in $F$.
\end{enumerate}
\end{proposition}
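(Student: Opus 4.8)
The plan is to prove the cycle (i) $\Rightarrow$ (ii) $\Rightarrow$ (iii) $\Rightarrow$ (i). The first implication is immediate: a closed set $\neg U \subseteq X$ equals $\neg U \cup \emptyset$, hence is $\*\Pi^0_2$, so if every $\*\Pi^0_2$ subspace is Baire then in particular every closed one is. The implication (ii) $\Rightarrow$ (iii) is the standard observation that a nonempty Baire space is non-meager in itself: if $F$ is a nonempty closed subspace — hence Baire by (ii) — and $F = \bigcup_n N_n$ with each $N_n$ nowhere dense in $F$, then the sets $F \setminus \overline{N_n}^F$ are dense open in $F$ with empty intersection, contradicting the Baire property.

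The work is in (iii) $\Rightarrow$ (i), and the main conceptual point — which I expect to be the chief obstacle, since (iii) speaks only of \emph{closed} subspaces while (i) concerns arbitrary $\*\Pi^0_2$ subspaces — is to isolate the following general lemma: \emph{if every nonempty closed subspace of a space $Z$ is non-meager in itself, then $Z$ is Baire.} To prove it, suppose $Z$ is not Baire, so that for some dense open sets $D_n \subseteq Z$ the intersection $\bigcap_n D_n$ misses a nonempty open $W \subseteq Z$; then $W = \bigcup_n (W \setminus D_n)$ exhibits $W$ as meager in $Z$, since each $W \setminus D_n$ lies in the nowhere dense closed set $Z \setminus D_n$. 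Now pass to the nonempty closed subspace $C := \overline{W}^Z$. Since $W$ is open in $Z$, it is open, and dense, in $C$; and for each nonempty open $V \subseteq C$ the set $V \cap W$ is a nonempty open subset of $Z$, so the fact that each $W \setminus D_n$ is nowhere dense in $Z$ forces it to be nowhere dense in $C$ as well. Hence $W$ is meager in $C$; as $C \setminus W$ is also nowhere dense in $C$ (being closed with empty interior, $W$ being dense), $C$ is meager in itself, contradicting the hypothesis.

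Granting the lemma, here is (iii) $\Rightarrow$ (i). Let $Y \subseteq X$ be $\*\Pi^0_2$; by the lemma it suffices to show every nonempty closed $C \subseteq Y$ is non-meager in $C$. Put $F := \overline{C}^X$, a nonempty closed subset of $X$. A short computation ($C = Y \cap C_0$ for some closed $C_0 \subseteq X$, and $F \subseteq C_0$) gives $C = Y \cap F$, so $C$ is $\*\Pi^0_2$ in $F$ and dense in $F$, hence comeager in $F$ by \cref{thm:dense-pi02-gdelta}. Suppose toward a contradiction that $C$ is meager in $C$. Since $C$ is dense in $F$, meagerness transfers upward, so $C$ is meager in $F$; but $F \setminus C$ is also meager in $F$ ($C$ being comeager), so $F = C \cup (F \setminus C)$ is meager in itself, contradicting (iii) for the nonempty closed set $F$. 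Thus $C$ is non-meager in $C$, and the lemma yields that $Y$ is Baire.

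Apart from locating the closed-subspace lemma, the remaining care is bookkeeping in the non-metrizable setting: one must keep track of the ambient space in which each closure or interior is taken, and use repeatedly that nowhere-denseness passes between a space and a dense subspace in both directions (a nowhere dense subset of a dense subspace is nowhere dense in the whole space, and conversely). None of this is deep, but it is where a careless argument would slip; in particular one must resist treating closed sets as $G_\delta$, which is exactly why \cref{thm:dense-pi02-gdelta} rather than the classical $G_\delta$ statement is the right tool.
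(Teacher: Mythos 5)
Your proof is correct, and it reaches (iii)$\implies$(i) by a genuinely different decomposition than the paper, though the key tool is the same. The paper verifies the Baire property of $Y$ directly: given open sets $W_n$ dense in $Y$ and an open $U$ meeting $Y$, it sets $F := \-{U \cap Y}$, observes that $U \cap Y$ is a dense $\*\Pi^0_2$ subset of $F$ and hence comeager in $F$ by \cref{thm:dense-pi02-gdelta}, and applies (iii) to $F$ to produce a point of $U \cap Y \cap \bigcap_n W_n$. You instead factor the argument into two reusable pieces: a purely point-set lemma asserting that condition (iii) for a space implies that space is Baire (which, together with your (ii)$\implies$(iii), shows (ii)$\iff$(iii) for arbitrary topological spaces), plus the claim that condition (iii) passes from $X$ to any $\*\Pi^0_2$ subspace $Y$ --- proved by taking a relatively closed $C \subseteq Y$, passing to $F := \-{C}$, and again invoking \cref{thm:dense-pi02-gdelta} on the dense $\*\Pi^0_2$ set $C = Y \cap F$ inside $F$. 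So the essential mechanism is shared (closures in $X$ of pieces of $Y$ are closed sets in which those pieces are dense and $\*\Pi^0_2$, hence comeager, and (iii) then forbids the relevant meager decomposition); the paper localizes to relatively \emph{open} pieces $U \cap Y$ and finishes in one stroke, while you localize to relatively \emph{closed} pieces and pay for the extra generality with the closed-subspace lemma. Your handling of the delicate points --- keeping track of which ambient space each closure and interior is taken in, and the two-way transfer of nowhere-density along dense subspaces --- is sound throughout, so the extra length buys a cleanly isolated general fact at no cost in correctness.
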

\begin{proof}
Clearly (i)$\implies$(ii)$\implies$(iii).  Assume (iii), and let $Y \subseteq X$ be $\*\Pi^0_2$; we show that $Y$ is Baire.  Let $W_n \subseteq X$ be open sets dense in $Y$; we must show that $\bigcap_n W_n$ is dense in $Y$.  Let $U \subseteq X$ be open with $U \cap Y \ne \emptyset$; we must show that $U \cap Y \cap \bigcap_n W_n \ne \emptyset$.  Put $F := \-{U \cap Y}$; clearly $F \ne \emptyset$.  Since $U \cap Y$ is $\*\Pi^0_2$ and dense in $F$, by \cref{thm:dense-pi02-gdelta} there are $V_n \subseteq F$ dense open in $F$ with $\bigcap_n V_n \subseteq U \cap Y$.  Each $W_n$ is dense in $U \cap Y$, hence also in $F$, so by (iii), $\emptyset \ne \bigcap_n V_n \cap \bigcap_n W_n \subseteq U \cap Y \cap \bigcap_n W_n$, as desired.
\end{proof}

\begin{theorem}[Baire category theorem {\cite[Corollary~52]{deB}}]
Countably correlated spaces are (completely) Baire.
\end{theorem}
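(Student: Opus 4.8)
The plan is to reduce to \cref{thm:herebaire}. Since a $\*\Pi^0_2$ subspace of a countably correlated space is again countably correlated (\cref{thm:qpol-pi02}), it suffices by \cref{thm:herebaire}(i) to prove the apparently weaker statement that every countably correlated space is Baire; completely Baire then follows formally. So I would take $X \subseteq \#S^I = \@P(I)$ with $X = \bigcap_n (\neg U_n \cup V_n)$ for open $U_n, V_n \subseteq \@P(I)$, each of which is thus a union of basic open sets $\up s$ (finite $s \subseteq I$). Given open sets $W_m \subseteq X$ dense in $X$, I would write $W_m = X \cap \widetilde W_m$ with $\widetilde W_m \subseteq \@P(I)$ open, fix a basic open $\up t$ with $\up t \cap X \ne \emptyset$, and aim to produce a point of $\up t \cap X \cap \bigcap_m W_m$; as $\up t \cap X$ ranges over a basis of $X$, this shows $\bigcap_m W_m$ is dense in $X$.

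The one nontrivial input is the following observation, which is where the $\*\Pi^0_2$ definition of $X$ enters: if $\up s$ is a basic open set with $\up s \subseteq U_n$ and $\up s \cap X \ne \emptyset$, then in fact $\up s \cap V_n \cap X \ne \emptyset$, since any $x \in \up s \cap X$ lies in $U_n$, hence --- being in $X$ --- in $V_n$. With this in hand I would run a standard Baire-category recursion, building finite sets $t = t_0 \subseteq t_1 \subseteq \cdots \subseteq I$ subject to the invariant $\up{t_k} \cap X \ne \emptyset$, processing at stage $k$ one task from a bookkeeping enumeration in which each of the following recurs infinitely often. Task $(a_m)$, ``make $\up{t_{k+1}} \subseteq \widetilde W_m$'': achievable because $W_m$ is dense in the nonempty open subset $\up{t_k} \cap X$ of $X$, so we may take $t_{k+1} := t_k \cup s$ for a basic $\up s \subseteq \widetilde W_m$ passing through a point of $\up{t_k} \cap X \cap \widetilde W_m$, which preserves the invariant. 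Task $(b_n)$, ``if $\up{t_k} \subseteq U_n$, make $\up{t_{k+1}} \subseteq V_n$'': achievable by the observation above (take $t_{k+1} := t_k \cup s$ for a basic $\up s \subseteq V_n$ through a point of $\up{t_k} \cap V_n \cap X$), and vacuous otherwise ($t_{k+1} := t_k$).

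Setting $x := \bigcup_k t_k \in \@P(I)$, I would then verify: $x \in \up t$ since $t_0 \subseteq x$; $x \in \widetilde W_m$ for every $m$, since at the stage addressing $(a_m)$ we forced $\up{t_{k+1}} \subseteq \widetilde W_m$ and $x \in \up{t_{k+1}}$; and $x \in X$, because if $x \in U_n$ then, $U_n$ being open, there is a basic $\up s \subseteq U_n$ with $s \subseteq x$; as $s$ is finite, $s \subseteq t_k$ for some $k$, so $\up{t_k} \subseteq \up s \subseteq U_n$, and then the next stage addressing $(b_n)$ forces $\up{t_{k'+1}} \subseteq V_n$, whence $x \in \up{t_{k'+1}} \subseteq V_n$. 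Thus $x \in \up t \cap X \cap \bigcap_m \widetilde W_m = \up t \cap \bigcap_m W_m$, as required. Finally, since countably correlated spaces are closed under $\*\Pi^0_2$ subspaces, \cref{thm:herebaire} upgrades ``Baire'' to ``completely Baire''.

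I expect the main obstacle to be organizing the recursion so that it never leaves $X$: the crux is to carry the invariant $\up{t_k} \cap X \ne \emptyset$ (which is exactly what makes the key observation applicable) and to exploit that membership of the limit point $x$ in each open set $U_n$ is already witnessed by a finite subset of $x$, so that the ``implication'' tasks $(b_n)$ can detect and react to it. Once this is set up, the remainder is the routine fair-bookkeeping bookkeeping of a Baire-category argument.
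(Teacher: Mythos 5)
Your proof is correct, and the core engine---an increasing sequence of finite subsets of $I$ whose union is the desired point, with membership of the limit in an open set of $\#S^I$ witnessed by a finite stage---is the same as in the paper. But the two arguments distribute the work differently. The paper applies the nontrivial implication (iii)$\Rightarrow$(i) of \cref{thm:herebaire} to the ambient space $\#S^I$, so that the recursion only ever deals with a nonempty \emph{closed} $F \subseteq \#S^I$ and dense open sets; the $\*\Pi^0_2$ structure of $X$ never enters the recursion, membership of the limit in $F$ comes for free from $x_n \to \bigcup_n s_n$ and closedness, and the conversion of $\*\Pi^0_2$ data into dense open data is quarantined inside \cref{thm:dense-pi02-gdelta}. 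You instead use only the trivial direction of \cref{thm:herebaire} (via \cref{thm:qpol-pi02}) and run the recursion directly on the $\*\Pi^0_2$ set $X$, handling each implication $U_n \Rightarrow V_n$ by an explicit task; your key observation that $\up s \subseteq U_n$ and $\up s \cap X \ne \emptyset$ force $\up s \cap V_n \cap X \ne \emptyset$ is playing exactly the role that \cref{thm:dense-pi02-gdelta} plays in the paper's development. The paper's route yields a shorter proof of this particular theorem at the cost of invoking more machinery; yours is more self-contained and makes visible where the $\*\Pi^0_2$ hypothesis is consumed, at the cost of a heavier bookkeeping recursion. All the steps you outline (invariant preservation for both task types, the finite-witness argument for $x \in U_n$, and the formal upgrade from Baire to completely Baire) check out.
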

\begin{proof}
By \cref{thm:herebaire}, it is enough to show that every nonempty closed $F \subseteq \#S^I$ is non-meager in $F$.  Let $U_n \subseteq \#S^I$ be open and dense in $F$; we must show that $F \cap \bigcap_n U_n \ne \emptyset$.
We will find finite $s_0 \subseteq s_1 \subseteq \dotsb \subseteq I$ and $x_n \in F \cap \up s_n$.
Let $s_0 := \emptyset$; then $F \cap \up s_0 = F \ne \emptyset$, so there is some $x_0 \in F \cap \up s_0$.  Given $s_n, x_n$ such that $x_n \in F \cap \up s_n \ne \emptyset$, since $U_n$ is dense in $F$, we have $F \cap \up s_n \cap U_n \ne \emptyset$, so there is some $x_{n+1} \in F \cap \up s_n \cap U_n$, whence there is some basic open $\up s_{n+1} \subseteq \up s_n \cap U_n$ such that $x_{n+1} \in \up s_{n+1}$, whence $s_n \subseteq s_{n+1}$ and $x_{n+1} \in F \cap \up s_{n+1}$.
Put $x := \bigcup_n s_n$.  Then $x \in \up s_{n+1} \subseteq U_n$ for each $n$, and $x = \lim_{n -> \infty} x_n \in F$, whence $F \cap \bigcap_n U_n \ne \emptyset$, as desired.
\end{proof}

As for Polish spaces \cite[\S8.J]{Kec}, we also have a well-behaved theory of ``fiberwise'' Baire category, i.e., category quantifiers, for quasi-Polish spaces.  We will state this in a more general context.

Let $f : X -> Y$ be a function between sets $X, Y$, such that for each $y \in Y$, the fiber $f^{-1}(y) \subseteq X$ is equipped with a topology.  For a subset $A \subseteq X$, put
\begin{align*}
\exists^*_f(A) &:= \{y \in Y \mid A \cap f^{-1}(y) \text{ is not meager in } f^{-1}(y)\} \subseteq Y, \\
\forall^*_f(A) &:= \{y \in Y \mid A \cap f^{-1}(y) \text{ is comeager in } f^{-1}(y)\} = \neg \exists^*_f(\neg A) \subseteq Y.
\end{align*}
A subset $U \subseteq X$ is \defn{$f$-fiberwise open} if $U \cap f^{-1}(y)$ is open in $f^{-1}(y)$ for each $y \in Y$; notions such as \defn{$f$-fiberwise Baire}, \defn{$f$-fiberwise Baire-measurable} are defined similarly.  A family $\@W$ of $f$-fiberwise open subsets of $X$ is a \defn{$f$-fiberwise weak basis} for a $f$-fiberwise open $U \subseteq X$ if for every $y \in Y$ and nonempty open $V \subseteq U \cap f^{-1}(y)$, there is some $\@W \ni W \subseteq U$ with $\emptyset \ne W \cap f^{-1}(y) \subseteq V$.

\begin{proposition}[{see \cite[8.27]{Kec}}]
\label{thm:bairequant-ops}
Let $f : X -> Y$ be as above.
\begin{enumerate}
\item[(i)]  If $X$ is $f$-fiberwise Baire, then for $f$-fiberwise open $U \subseteq X$,
\begin{align*}
\exists^*_f(U) = f(U).
\end{align*}
\item[(ii)]  For countably many $A_n \subseteq X$,
\begin{align*}
\exists^*_f(\bigcup_n A_n) = \bigcup_n \exists^*_f(A_n).
\end{align*}
\item[(iii)]  If $X$ is $f$-fiberwise Baire, then for $f$-fiberwise open $U \subseteq X$, $f$-fiberwise Baire-measurable $A \subseteq X$, and a $f$-fiberwise weak basis $\@W$ for $U$,
\begin{align*}
\exists^*_f(U \setminus A) = \bigcup_{\@W \ni W \subseteq U} (f(W) \setminus \exists^*_f(W \cap A)).
\end{align*}
\end{enumerate}
\end{proposition}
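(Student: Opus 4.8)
The plan is to reduce all three statements to a single fiber. Fix $y \in Y$ and abbreviate $Z := f^{-1}(y)$; each of the three equations is an equality of subsets of $Y$, and whether $y$ lies in either side of any of them depends only on the space $Z$ together with the traces on $Z$ of the sets involved. So I will restate each assertion as a fact about (co)meagerness in the Baire space $Z$. The only inputs I expect to need are: in a Baire space every nonempty open set is nonmeager (and the whole space is comeager, so $\emptyset$ is meager); a countable union of meager sets is meager; and removing a meager set from a nonmeager set leaves a nonmeager set. Given these, (i) is immediate: $U \cap Z$ is open in $Z$, hence nonmeager exactly when nonempty, i.e., $y \in \exists^*_f(U) \iff y \in f(U)$. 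And (ii) is immediate: $(\bigcup_n A_n) \cap Z = \bigcup_n (A_n \cap Z)$ is nonmeager iff some $A_n \cap Z$ is nonmeager; note this part uses no Baire hypothesis.

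Part (iii) carries the content. Write $U_0 := U \cap Z$ and $A_0 := A \cap Z$. For the inclusion $\supseteq$: if $W \in \@W$ with $W \subseteq U$, $W \cap Z \neq \emptyset$, and $W \cap A \cap Z$ meager in $Z$, then $(W \cap Z) \setminus A_0$ is a nonempty open subset of $Z$ with a meager set removed, hence nonmeager, and it sits inside $U_0 \setminus A_0$; thus $y \in \exists^*_f(U \setminus A)$, so $y$ lies in the left-hand side.

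For the inclusion $\subseteq$: suppose $U_0 \setminus A_0$ is nonmeager in $Z$. Since $A$ is $f$-fiberwise Baire-measurable, choose an open $G \subseteq Z$ with $A_0 \triangle G$ meager; because the boundary $\overline{G} \setminus G$ is nowhere dense, $U_0 \setminus \overline{G}$ differs from $U_0 \setminus A_0$ by a meager set, so it is nonmeager, and in particular it is a nonempty open subset $V$ of $U_0 \subseteq U \cap f^{-1}(y)$. Moreover $V \cap A_0 \subseteq A_0 \setminus G \subseteq A_0 \triangle G$ is meager, since $V \cap G = \emptyset$. Applying the $f$-fiberwise weak basis property of $\@W$ to $V$ produces $W \in \@W$ with $W \subseteq U$ and $\emptyset \neq W \cap f^{-1}(y) \subseteq V$, so $W \cap A \cap f^{-1}(y)$ is meager; hence $y \in f(W)$ and $y \notin \exists^*_f(W \cap A)$, placing $y$ in the right-hand side.

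The single delicate point — the step I expect to be the main obstacle — is this last direction: one must replace the merely Baire-measurable set $A$ by a genuine open approximant $G$ and then pass from $U_0 \setminus A_0$ to the \emph{open} set $U_0 \setminus \overline{G}$ (not $U_0 \setminus G$), which is precisely what makes the weak-basis hypothesis applicable and forces the chosen $W$ to satisfy $W \cap A$ meager on the fiber. Everything else reduces to routine manipulation of the meager ideal.
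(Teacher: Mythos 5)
Your proof is correct and follows essentially the same route as the paper: reduce to a single fiber, dispose of (i) and (ii) by elementary properties of the meager ideal, and prove the forward inclusion in (iii) by producing a nonempty open subset of $U\cap f^{-1}(y)$ on which $A$ is relatively meager and then invoking the weak-basis property. The only cosmetic difference is that the paper applies the Baire property directly to $(U\setminus A)\cap f^{-1}(y)$ to obtain that open set $V$, whereas you apply it to $A\cap f^{-1}(y)$ and take $V=(U\cap f^{-1}(y))\setminus\overline{G}$, using that the boundary of $G$ is nowhere dense; both arguments are valid and interchangeable.
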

\begin{proof}
(i) and (ii) are straightforward.  For (iii), if $y \in \exists^*_f(U \setminus A)$, i.e., $(U \setminus A) \cap f^{-1}(y)$ is non-meager in $f^{-1}(y)$, then letting (by the Baire property) $(U \setminus A) \cap f^{-1}(y) = V \triangle M$ where $V \subseteq U \cap f^{-1}(y)$ is open and $M \subseteq f^{-1}(y)$ is meager, we have some $\@W \ni W \subseteq U$ with $\emptyset \ne W \cap f^{-1}(y) \subseteq V$, whence $y \in f(W)$, and $W \cap A \cap f^{-1}(y) \subseteq V \cap A \cap f^{-1}(y) \subseteq M$, whence $y \not\in \exists^*_f(W \cap A)$.  Conversely, if $\@W \ni W \subseteq U$ with $y \in f(W) \setminus \exists^*_f(W \cap A)$, i.e., $W \cap f^{-1}(y) \ne \emptyset$ but $W \cap A \cap f^{-1}(y)$ is meager in $f^{-1}(y)$, then $(W \setminus A) \cap f^{-1}(y) \subseteq (U \setminus A) \cap f^{-1}(y)$ is non-meager (since $f^{-1}(y)$ is Baire), i.e., $y \in \exists^*_f(U \setminus A)$.
\end{proof}

The following result generalizes the well-known fact \cite[22.22]{Kec} that category quantifiers applied to Borel sets in products of Polish spaces preserve Borel complexity.

\begin{theorem}
\label{thm:bairequant-sigma0xi}
Let $f : X -> Y$ be a continuous open map, where $X$ is a second-countable completely Baire space.  Then $X$ is $f$-fiberwise Baire, and for every $A \in \*\Sigma^0_\xi(X)$, we have $\exists^*_f(A) \in \*\Sigma^0_\xi(Y)$.
\end{theorem}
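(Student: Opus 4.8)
The plan is to establish the two assertions in turn: first that $X$ is $f$-fiberwise Baire, and then the complexity bound by induction on $\xi$, with \cref{thm:bairequant-ops} doing all the work. Throughout I fix a countable basis $\@B$ of $X$; for any open $U \subseteq X$ the family $\{B \in \@B \mid B \subseteq U\}$ is an $f$-fiberwise weak basis for $U$ (indeed an honest basis of $U$), so in particular $\@B$ itself is one for $U = X$. For the fiberwise-Baire claim I would first replace $Y$ by $f(X)$: this is open in $Y$ since $f$ is open, so $\*\Sigma^0_\xi(f(X)) \subseteq \*\Sigma^0_\xi(Y)$ and it is harmless to assume $f$ surjective, whence $Y$ becomes second-countable (the $f$-images of a basis of $X$ form a basis of $Y$). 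Assuming moreover that $Y$ is $T_0$, as in all the intended applications, the singleton $\{y\}$ is $\*\Pi^0_2$ in $Y$, being a vertical section of the equality relation, which is $\*\Pi^0_2$ on a second-countable $T_0$ space; hence $f^{-1}(y)$ is $\*\Pi^0_2$ in $X$, and therefore Baire by \cref{thm:herebaire} (as $X$ is completely Baire). So $X$ is $f$-fiberwise Baire and \cref{thm:bairequant-ops} becomes available.

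Now induct on $\xi$. The base case $\xi = 1$: an open $A$ is $f$-fiberwise open, so $\exists^*_f(A) = f(A)$ by \cref{thm:bairequant-ops}(i), which is open since $f$ is. The case $\xi = 2$ I would treat separately: writing $A = \bigcup_n (B_n \setminus C_n)$ with $B_n, C_n$ open, \cref{thm:bairequant-ops}(ii) reduces matters to each $\exists^*_f(B_n \setminus C_n)$, and \cref{thm:bairequant-ops}(iii), applied to the $f$-fiberwise open set $B_n$, the open (hence $f$-fiberwise Baire-measurable) set $C_n$, and the weak basis above, gives
\[ \exists^*_f(B_n \setminus C_n) = \bigcup_{B \in \@B,\ B \subseteq B_n} \bigl( f(B) \setminus \exists^*_f(B \cap C_n) \bigr). \]
Since $B \cap C_n$ is open, $\exists^*_f(B \cap C_n) = f(B \cap C_n)$ is open by \cref{thm:bairequant-ops}(i), so each summand is a difference of open sets and the whole union is in $\*\Sigma^0_2(Y)$.

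For $\xi > 2$ I would invoke the normal form $A = \bigcup_n (X \setminus C_n)$ with $C_n \in \*\Sigma^0_{\zeta_n}(X)$, $\zeta_n < \xi$, valid in this range. By \cref{thm:bairequant-ops}(ii) it suffices to bound each $\exists^*_f(X \setminus C_n)$; since $C_n$ is Borel, hence $f$-fiberwise Baire-measurable, \cref{thm:bairequant-ops}(iii) with $U = X$ and weak basis $\@B$ gives
\[ \exists^*_f(X \setminus C_n) = \bigcup_{B \in \@B} \bigl( f(B) \setminus \exists^*_f(B \cap C_n) \bigr). \]
Here $f(B)$ is open, and $B \cap C_n \in \*\Sigma^0_{\zeta_n}(X)$ because $\*\Sigma^0_{\zeta_n}$ is closed under intersection with open sets, so $\exists^*_f(B \cap C_n) \in \*\Sigma^0_{\zeta_n}(Y)$ by the induction hypothesis. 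Both $f(B)$ and $\exists^*_f(B \cap C_n)$ thus lie in $\bigcup_{\zeta < \xi} \*\Sigma^0_\zeta(Y)$, and since $\@B$ is countable the displayed union — hence $\exists^*_f(A)$, a countable union of such — is in $\*\Sigma^0_\xi(Y)$.

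The delicate point, and the reason $\xi = 2$ must be handled by hand (the $A_n = X$ normal form being unavailable there), is the complexity bookkeeping in the successor step. A uniform treatment of a building block $B \setminus C$ (with $B, C \in \bigcup_{\zeta<\xi}\*\Sigma^0_\zeta(X)$) via \cref{thm:bairequant-ops}(iii) applied to $\exists^*_f\bigl(X \setminus (\neg B \cup C)\bigr)$ would require controlling $\exists^*_f$ of sets $B \cap \neg C$ with $\neg C \in \bigcup_{\zeta<\xi}\*\Pi^0_\zeta(X)$; but the induction hypothesis bounds $\exists^*_f$ only for $\*\Sigma^0_\zeta$ inputs (equivalently $\forall^*_f$ for $\*\Pi^0_\zeta$ inputs), and applied to such a $\*\Pi$-class set it yields only a bound one level too high. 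Restricting for $\xi > 2$ to the normal form with leading term $X$ is precisely what avoids ever feeding a $\*\Pi$-class set into $\exists^*_f$ — the same device underlying the classical \cite[22.22]{Kec}. Finally, second-countability of $X$ enters essentially, in guaranteeing that the weak bases, and hence the unions above, are countable.
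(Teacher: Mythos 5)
Your proof is correct and takes essentially the same route as the paper's: show the fibers are $\*\Pi^0_2$ (hence Baire, by \cref{thm:herebaire}), note that a countable basis is an $f$-fiberwise weak basis, and then induct on $\xi$ using \cref{thm:bairequant-ops}. The only cosmetic difference is that the paper merges your two cases $\xi = 2$ and $\xi > 2$ by using the single normal form $\bigcup_n (U_n \setminus B_n)$ with $U_n$ open and $B_n \in \*\Sigma^0_{\zeta_n}(X)$, $\zeta_n < \xi$, valid for every $\xi > 1$; your explicit $T_0$ hypothesis on $Y$ (needed for points of $f(X)$ to be $\*\Pi^0_2$) is likewise implicit in the paper's argument.
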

\begin{proof}
Since $X$ is second-countable, so is $f(X) \subseteq Y$, whence points $y \in f(X)$ are $\*\Pi^0_2$, whence fibers $f^{-1}(y) \subseteq X$ for $y \in Y$ are $\*\Pi^0_2$, hence Baire.  Let $\@W$ be a countable basis of open sets in $X$; then $\@W$ is a $f$-fiberwise weak basis for any open $U \subseteq X$.  So the hypotheses of \cref{thm:bairequant-ops} are satisfied.  Now induct on $\xi$, using \cref{thm:bairequant-ops} and the fact that for $\xi > 1$, $\*\Sigma^0_\xi(X)$ consists precisely of sets of the form $\bigcup_n (U_n \setminus B_n)$ with $U_n$ open and $B_n \in \*\Sigma^0_{\zeta_n}(X)$, $\zeta_n < \xi$.
\end{proof}

We also have the following generalization of the classical Kuratowski--Ulam theorem \cite[8.41]{Kec}; the proof is essentially from \cite[A.1]{MT}.  Recall that a continuous map $f : X -> Y$ is \defn{category-preserving} if the preimage of every meager set is meager; this includes all open maps.

\begin{theorem}[Kuratowski--Ulam theorem]
Let $f : X -> Y$ be a continuous open map, where $X$ is a second-countable completely Baire space.  Then for every Baire-measurable $A \subseteq X$,
\begin{enumerate}
\item[(i)]  $A \cap f^{-1}(y)$ is Baire-measurable in $f^{-1}(y)$ for comeagerly many $y \in Y$;
\item[(ii)]  $\exists^*_f(A), \forall^*_f(A) \subseteq Y$ are Baire-measurable;
\item[(iii)]  $\exists^*_f(A) \subseteq Y$ (respectively $\forall^*_f(A) \subseteq Y$) is (co)meager iff $A \subseteq X$ is.
\end{enumerate}
\end{theorem}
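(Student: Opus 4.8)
The plan is to follow the classical template for Kuratowski--Ulam (as in \cite[8.41]{Kec}), reducing everything to the structural facts about the category quantifiers already established in \cref{thm:bairequant-ops} together with the Borel-complexity-preservation of \cref{thm:bairequant-sigma0xi}. The first observation is that, exactly as in the proof of \cref{thm:bairequant-sigma0xi}, the hypotheses guarantee that $X$ is $f$-fiberwise Baire (fibers are $\*\Pi^0_2$, hence Baire by the Baire category theorem), so the category quantifiers are well-behaved; moreover $f$ is category-preserving since it is open.

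For (i), I would first handle the case that $A$ is open: then $A \cap f^{-1}(y)$ is fiberwise open, hence Baire-measurable in every fiber, so the statement is trivial. Next, for $A$ closed (equivalently, $\neg A$ open, and then $A \cap f^{-1}(y)$ is closed in the fiber) the same works. The point of (i) is really the meager case: if $M \subseteq X$ is meager, then $M \cap f^{-1}(y)$ is meager in $f^{-1}(y)$ for comeagerly many $y$. This is the fiberwise analog of the first half of classical Kuratowski--Ulam, and I would prove it by writing $M \subseteq \bigcup_n \neg D_n$ with $D_n$ dense open in $X$, observing $D_n \cap f^{-1}(y)$ is dense open in $f^{-1}(y)$ whenever $y \in f(D_n \cap f^{-1}(y))$ — which, since $f$ is open, happens for all $y$ in the open dense (in $f(X)$, hence handled via $\exists^*_f$) set $\exists^*_f(D_n) \supseteq$ a dense open set by \cref{thm:bairequant-ops}(i) and \cref{thm:dense-pi02-gdelta}-style density arguments. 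Intersecting over $n$ gives comeagerly many good $y$. A general Baire-measurable $A = U \triangle M$ with $U$ open, $M$ meager then satisfies $A \cap f^{-1}(y) = (U\cap f^{-1}(y)) \triangle (M \cap f^{-1}(y))$, which is Baire-measurable in the fiber for comeagerly many $y$, giving (i).

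For (ii), I would use \cref{thm:bairequant-ops}(iii): writing $A = U \triangle M = (U \setminus A') \cup (\text{meager piece})$ appropriately, or more directly noting $\exists^*_f(A) = \exists^*_f(U) \cup \exists^*_f(M\cap \neg U)$ and $\exists^*_f(U) = f(U)$ is open by openness of $f$ (using \cref{thm:bairequant-ops}(i),(ii)), while $\exists^*_f$ of a meager set is meager — this last being the fiberwise statement dual to (i): if $M$ is meager in $X$ then by the comeager-fiber fact just proved $M\cap f^{-1}(y)$ is meager in comeagerly many fibers, so $\exists^*_f(M)$ is contained in the complement of a comeager set, hence meager. (Here I need that a countable intersection of dense $\*\Pi^0_2$ sets has $f$-image-complement meager, again via \cref{thm:dense-pi02-gdelta} and \cref{thm:bairequant-ops}.) Thus $\exists^*_f(A)$ differs from the open set $f(U)$ by a meager set, so it is Baire-measurable; $\forall^*_f(A) = \neg\exists^*_f(\neg A)$ is then also Baire-measurable. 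Part (iii) follows: $\exists^*_f(A)$ is comeager iff $f(U)$ is (they differ by a meager set) iff $U$ is dense (since $f$ open and category-preserving, $f(U)$ dense $\iff$ $U$ dense — $f^{-1}$ of a comeager set is comeager) iff $A$ is comeager; and $\exists^*_f(A)$ meager iff $\exists^*_f(A)\subseteq$ meager iff (by (ii) and the meager-image fact) $A$ meager; the $\forall^*_f$ statement is the complementary one.

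The main obstacle I anticipate is the fiberwise meager-set lemma underlying (i) — showing $\{y : M\cap f^{-1}(y) \text{ meager in } f^{-1}(y)\}$ is comeager for $M$ meager in $X$ — because one must transfer ``dense open in $X$'' to ``dense open in $f^{-1}(y)$ for comeagerly many $y$'' using only openness of $f$ and the Baire category machinery for $\#S^I$, without the metrizability crutch used in the classical proof; the $\*\Pi^0_2$ formulation of comeagerness in \cref{thm:dense-pi02-gdelta} is precisely what makes this go through, but setting it up cleanly requires care.
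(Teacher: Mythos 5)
Your overall architecture matches the paper's: everything reduces to the single lemma that for comeager $A \subseteq X$ the set $\forall^*_f(A) \subseteq Y$ is comeager, after which (i) and (ii) follow by writing $A = U \triangle M$ and using that $\forall^*_f(U)$ is $\*\Pi^0_2$ by \cref{thm:bairequant-sigma0xi}, and (iii) follows using that $f$, being open, is category-preserving. However, the step you yourself flag as the main obstacle is exactly the step your sketch gets wrong. You assert that $D_n \cap f^{-1}(y)$ is dense in $f^{-1}(y)$ whenever $y \in f(D_n \cap f^{-1}(y))$; that condition only says that $D_n$ \emph{meets} the fiber, which is far from density in the fiber (take $X = [0,1]^2$, $f$ the first projection, and $D = X \setminus (\{1/2\} \times [0,1/2])$: this $D$ is dense open in $X$ and meets every fiber, yet is not dense in the fiber over $1/2$). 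Consequently the set of ``good'' $y$ is not $\exists^*_f(D_n)$, and it is not open in general, so your route to ``comeagerly many good $y$'' does not go through as written. This is a genuine gap, not a presentational one: without this lemma, neither the meager case of (i) nor the estimate $\forall^*_f(A) \triangle \forall^*_f(U) \subseteq \exists^*_f(M)$ (which drives (ii) and (iii)) is available.

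The missing argument, which is the heart of the paper's proof, runs as follows. Fix a countable basis $\mathcal{W}$ of $X$. For a dense open $D \subseteq X$ and each $W \in \mathcal{W}$, openness of $f$ gives that $f(D \cap W)$ is dense open in $f(W)$ (if $\emptyset \ne V \subseteq f(W)$ is open then $W \cap f^{-1}(V) \ne \emptyset$, hence $D \cap W \cap f^{-1}(V) \ne \emptyset$ by density of $D$, hence $f(D \cap W) \cap V \ne \emptyset$). Therefore each $\neg f(W) \cup f(D \cap W)$ is a dense $\*\Pi^0_2$ subset of $Y$, and by \cref{thm:dense-pi02-gdelta} the countable intersection $G := \bigcap_{W \in \mathcal{W}} (\neg f(W) \cup f(D \cap W))$ is comeager; finally $y \in G$ iff every basic $W$ meeting $f^{-1}(y)$ also meets $D \cap f^{-1}(y)$, i.e., iff $D \cap f^{-1}(y)$ is dense (hence, being fiberwise open, comeager) in $f^{-1}(y)$. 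This is precisely where second-countability of $X$ and the $\*\Pi^0_2$ reformulation of comeagerness are used, replacing the metrizability crutch of the classical proof. With this lemma supplied, the remainder of your outline is correct and coincides with the paper's argument.
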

\begin{proof}
First, we show $\Longleftarrow$ in (iii).  Let $A \subseteq X$ be comeager.  By \cref{thm:bairequant-ops}(ii), we may assume $A$ is dense open.  Let $\@W$ be a countable basis of open sets in $X$.  Then for each $W \in \@W$, $f(A \cap W)$ is dense open in $f(W)$, since if $\emptyset \ne V \subseteq f(W)$ is open then $\emptyset \ne W \cap f^{-1}(V)$ whence (since $A$ is dense) $\emptyset \ne A \cap W \cap f^{-1}(V)$ whence $\emptyset \ne f(A \cap W \cap f^{-1}(V)) = f(A \cap W) \cap V$.  It follows that
\begin{align*}
G := \bigcap_{W \in \@W} (\neg f(W) \cup f(A \cap W))
\end{align*}
is (a countable intersection of dense $\*\Pi^0_2$ sets, hence) comeager.  We have $y \in G$ iff for every $W \in \@W$ with $W \cap f^{-1}(y) \ne \emptyset$ we have $A \cap W \cap f^{-1}(y) \ne \emptyset$, i.e., iff $A \cap f^{-1}(y)$ is dense in $f^{-1}(y)$.  Thus $G \subseteq \forall^*_f(A)$, and so $\forall^*_f(A)$ is comeager, as desired.

Now let $A \subseteq X$ be Baire-measurable, say $A = U \triangle M$ where $U$ is open and $M$ is meager.  Then for all of the comeagerly many $y \in \forall^*_f(\neg M)$ (by $\Longleftarrow$ in (iii)), we have that $M \cap f^{-1}(y)$ is meager in $f^{-1}(y)$, whence $A \cap f^{-1}(y) = (U \cap f^{-1}(y)) \triangle (M \cap f^{-1}(y))$ is Baire-measurable, proving (i), and $A \cap f^{-1}(y)$ is comeager (or meager) in $f^{-1}(y)$ iff $U \cap f^{-1}(y)$ is.  The latter implies that $\forall^*_f(A) \triangle \forall^*_f(U) \subseteq \exists^*_f(M)$ is meager; by \cref{thm:bairequant-sigma0xi}, $\forall^*_f(U)$ is $\*\Pi^0_2$ and so Baire-measurable, whence $\forall^*_f(A)$ is Baire-measurable, proving (ii).  Similarly, $\exists^*_f(A) \triangle \exists^*_f(U) \subseteq \exists^*_f(M)$ is meager.  Now to prove $\Longrightarrow$ in (iii): if $\exists^*_f(A)$ is meager, then so is $\exists^*_f(U) = f(U)$ (by \cref{thm:bairequant-ops}(i)), whence so is $U \subseteq f^{-1}(f(U))$ since $f$ is category-preserving, whence so is $A$.
\end{proof}

We close this section with some simple applications of Baire category.

\begin{proposition}
\label{thm:openmap-proj-sigma0xi}
Let $f : X -> Y$ be a continuous open map between quasi-Polish spaces.  Then for any $\*\Sigma^0_\xi$ and $f$-fiberwise open $A \subseteq X$, $f(A) \subseteq Y$ is $\*\Sigma^0_\xi$.
\end{proposition}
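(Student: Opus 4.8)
\textit{Proof proposal.} The plan is to read the statement off the two results about category quantifiers already established, \cref{thm:bairequant-ops}(i) and \cref{thm:bairequant-sigma0xi}. First I would check that $X$ meets the hypotheses of \cref{thm:bairequant-sigma0xi}. Being quasi-Polish, $X$ is second-countable by definition; and it is completely Baire because quasi-Polish spaces are in particular countably correlated (\cref{thm:qpol=2cnt+ccor}), hence completely Baire by the Baire category theorem. Since $f$ is continuous and open by hypothesis, all of the hypotheses of \cref{thm:bairequant-sigma0xi} are satisfied.

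Applying \cref{thm:bairequant-sigma0xi} then yields two facts simultaneously: $X$ is $f$-fiberwise Baire, and $\exists^*_f(A) \in \*\Sigma^0_\xi(Y)$ for the given $A \in \*\Sigma^0_\xi(X)$. Now I would bring in the remaining hypothesis that $A$ is $f$-fiberwise open: together with $X$ being $f$-fiberwise Baire, \cref{thm:bairequant-ops}(i) gives $\exists^*_f(A) = f(A)$. Hence $f(A) = \exists^*_f(A) \in \*\Sigma^0_\xi(Y)$, which is precisely the claim.

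I do not anticipate any real obstacle beyond matching up the hypotheses; the substantive content has all been absorbed into the machinery of category quantifiers. The only point worth re-examining is the equality $\exists^*_f(U) = f(U)$ for $f$-fiberwise open $U$: a fiberwise-open set is non-meager in exactly those fibers it meets, since a nonempty open subset of a Baire space is non-meager, and this is exactly \cref{thm:bairequant-ops}(i). Note that, just as in \cref{thm:bairequant-sigma0xi}, this argument uses nothing about $Y$ beyond that $\*\Sigma^0_\xi(Y)$ makes sense, so the proposition in fact holds whenever $X$ is quasi-Polish (or more generally second-countable and completely Baire) and $Y$ is an arbitrary topological space.
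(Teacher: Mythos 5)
Your proposal is correct and is essentially the paper's own proof: the paper likewise obtains $f(A) = \exists^*_f(A)$ from \cref{thm:bairequant-ops}(i) and then concludes $\exists^*_f(A) \in \*\Sigma^0_\xi(Y)$ from \cref{thm:bairequant-sigma0xi}, with the hypothesis-checking (second-countable, completely Baire via the Baire category theorem) left implicit. Your closing observation that no hypothesis on $Y$ beyond being a topological space is actually used is accurate but does not change the argument.
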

\begin{proof}
By \cref{thm:bairequant-ops}(i), $f(A) = \exists^*_f(A)$, which is $\*\Sigma^0_\xi$ by \cref{thm:bairequant-sigma0xi}.
\end{proof}

\begin{corollary}
Let $f : X -> Y$ be a continuous open surjection between quasi-Polish spaces.  Then $B \subseteq Y$ is $\*\Sigma^0_\xi$ iff $f^{-1}(B)$ is.
\end{corollary}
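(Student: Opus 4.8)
The plan is to prove the two implications separately, with the forward direction being purely formal and the reverse direction an immediate application of \cref{thm:openmap-proj-sigma0xi}.

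For the forward direction, suppose $B \subseteq Y$ is $\*\Sigma^0_\xi$. Since $f$ is continuous, $f^{-1}$ sends open sets to open sets, and it commutes with countable unions and with set differences; hence by a routine induction on $\xi$ (mirroring the inductive definition of $\*\Sigma^0_\xi$), $f^{-1}$ sends $\*\Sigma^0_\xi$ sets to $\*\Sigma^0_\xi$ sets, so $f^{-1}(B)$ is $\*\Sigma^0_\xi$.

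For the reverse direction, suppose $f^{-1}(B) \subseteq X$ is $\*\Sigma^0_\xi$. The key observation is that $f^{-1}(B)$ is a union of fibers of $f$, so for each $y \in Y$ we have $f^{-1}(B) \cap f^{-1}(y)$ equal to either $\emptyset$ or all of $f^{-1}(y)$; in either case this is open in $f^{-1}(y)$, so $f^{-1}(B)$ is $f$-fiberwise open. Applying \cref{thm:openmap-proj-sigma0xi} to $A := f^{-1}(B)$, we conclude that $f(f^{-1}(B)) \subseteq Y$ is $\*\Sigma^0_\xi$; and since $f$ is surjective, $f(f^{-1}(B)) = B$.

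There is essentially no obstacle here: both implications reduce to facts already in hand (continuity for one direction, \cref{thm:openmap-proj-sigma0xi} together with surjectivity for the other), and the only point requiring a moment's thought is that a saturated set such as $f^{-1}(B)$ is automatically $f$-fiberwise open.
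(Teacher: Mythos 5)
Your proposal is correct and follows essentially the same route as the paper: the paper's proof likewise obtains the nontrivial direction by applying \cref{thm:openmap-proj-sigma0xi} to $f^{-1}(B)$ (which is $f$-fiberwise open, being a union of fibers) and using surjectivity to get $B = f(f^{-1}(B))$, while treating the forward direction as immediate from continuity. Your explicit remark that saturation of $f^{-1}(B)$ gives fiberwise openness is exactly the implicit step the paper relies on.
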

\begin{proof}
Since $B$ is surjective, $B = f(f^{-1}(B))$, which is $\*\Sigma^0_\xi$ by \cref{thm:openmap-proj-sigma0xi} if $f^{-1}(B)$ is.
\end{proof}

\begin{theorem}
\label{thm:openmap-sect}
Let $f : X -> Y$ be a continuous open map between quasi-Polish spaces.  Then $f$ admits a Borel \defn{section} $s : f(X) -> X$, i.e., a Borel map such that $f \circ s = 1_{f(X)}$.
\end{theorem}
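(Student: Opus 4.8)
The plan is to construct the section by hand: realize $X$ concretely as a $\*\Pi^0_2$ subspace of $\#S^\#N$ and build the point $s(y)$ as an increasing union of finite subsets $a_0 \subseteq a_1 \subseteq \dotsb$ of $\#N$, committing coordinates only positively (declaring them to lie \emph{in} $s(y)$), one at a time. The reason for doing only positive commitments is that for each finite $t \subseteq \#N$ the set $\up t \cap X$ is open in $X$, so $f(\up t \cap X)$ is open in $Y$ since $f$ is open; hence the condition $y \in f(\up t \cap X)$, which asserts that $\up t \cap X$ meets the fiber over $y$, is open in $y$. All the case distinctions in the construction will be along conditions of this form, and this is what keeps the section Borel.

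First I would replace $Y$ by the open — hence $\*\Pi^0_2$, hence quasi-Polish by \cref{thm:qpol-sub=pi02} — subspace $f(X)$, so that we may assume $f$ is surjective and a Borel section $s \colon Y \to X$ is what is required; and after composing with a homeomorphism we may assume $X \subseteq \#S^\#N$, say $X = \bigcap_n (\neg P_n \cup Q_n)$ with $P_n = \bigcup_j \up p^n_j$ and $Q_n = \bigcup_j \up q^n_j$ for finite $p^n_j, q^n_j \subseteq \#N$. Fix also a countable basis $(V_m)_m$ of $Y$. For fixed $y \in Y$ I would build $a_0 \subseteq a_1 \subseteq \dotsb \subseteq \#N$ by a bookkeeping that attends, at infinitely many stages, to each of the tasks: $(\text{T1}_n)$ if some $p^n_j \subseteq a_k$, adjoin elements so that some $q^n_{j'} \subseteq a_k$; $(\text{T2}_m)$ if $y \in V_m$, adjoin elements so that $\up a_k \cap X \subseteq f^{-1}(V_m)$; all the while maintaining the invariant $y \in f(\up a_k \cap X)$ (valid at stage $0$ since $f$ is onto). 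When a task fires, the invariant furnishes a witness $A \in X$ with $a_k \subseteq A$ and $f(A) = y$, and one obtains $a_{k+1}$ by adjoining to $a_k$ a suitable finite subset of $A$: for $(\text{T1}_n)$, from $A \in \up p^n_j \subseteq P_n$ we get $A \in Q_n$, so some $q^n_{j'} \subseteq A$; for $(\text{T2}_m)$, from $f(A) = y \in V_m$ we get $A$ in the open set $f^{-1}(V_m)$, so $\up r \cap X \subseteq f^{-1}(V_m)$ for some finite $r \subseteq A$. Since the new $a_{k+1}$ is still contained in $A$, the invariant survives. Taking at each step the least valid adjunction in a fixed enumeration, and using that the conditions $y \in f(\up t \cap X)$ and $y \in V_m$ are open in $y$, makes $y \mapsto (a_k)_k$, and hence $s(y) := \bigcup_k a_k$, a Borel function of $y$.

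It then remains to verify that $s(y) \in X$ and $f(s(y)) = y$ for every $y \in Y$. The $(\text{T1}_n)$ tasks give $s(y) \in \bigcap_n (\neg P_n \cup Q_n) = X$: if $s(y) \in P_n$ then $p^n_j \subseteq a_k$ for some $j$ and $k$, and then the first subsequent action on $(\text{T1}_n)$ (or an earlier one) puts some $q^n_{j'}$ into $s(y)$. The invariant gives $f(s(y)) \le y$: if $f(s(y))$ lies in a basic open $V$ of $Y$ then $s(y) \in f^{-1}(V)$, so $\up a_k \cap X \subseteq f^{-1}(V)$ for some $k$, whence $y \in f(\up a_k \cap X) \subseteq V$. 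The $(\text{T2}_m)$ tasks give $y \le f(s(y))$: if $y \in V_m$ then $\up a_k \cap X \subseteq f^{-1}(V_m)$ for some $k$, while $s(y) \in \up a_k \cap X$, so $f(s(y)) \in V_m$. Since $Y$ is $T_0$, $f(s(y)) = y$.

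I expect the delicate point to be the bookkeeping in the second paragraph: one must ensure both that every $(\text{T1}_n)$ task which ever fires (because $P_n$ eventually covers $\bigcup_k a_k$) is acted on, and that acting on any task never destroys the invariant. Both of these rely on only ever committing coordinates positively; a negative commitment (declaring some coordinate to lie outside $s(y)$) would replace the open test $y \in f(\up t \cap X)$ by a test of the form $y \in f(C)$ with $C$ merely locally closed in $X$, whose truth set is in general only analytic, not Borel, and the argument would break down.
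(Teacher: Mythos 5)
Your construction is correct, but it is a genuinely different proof from the one in the paper. The paper gets the section abstractly, by applying the large section uniformization theorem \cite[18.6]{Kec} to the inverse graph of $f$ with the $\sigma$-ideals of fiberwise meager sets; the two things to check there are that each fiber $f^{-1}(y)$ is Baire (it is $\*\Pi^0_2$, hence quasi-Polish, hence Baire) and that $y \mapsto \@I_y$ is Borel-on-Borel, which is exactly what the category quantifier machinery (\cref{thm:bairequant-sigma0xi}) provides. You instead build $s(y)$ explicitly as an increasing union of finite positive commitments in a $\*\Pi^0_2$ realization $X \subseteq \#S^\#N$, with the single invariant $y \in f(\up a_k \cap X)$; your key observation --- that openness of $f$ makes every test performed by the construction an \emph{open} condition on $y$, precisely because only positive commitments are ever made --- is what replaces the Baire category input. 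All the steps check out: the invariant is maintained (each adjoined finite set is a subset of the witness $A$), the $\mathrm{T1}$ tasks force $s(y) \in \bigcap_n(\neg P_n \cup Q_n) = X$, the invariant and the $\mathrm{T2}$ tasks give $f(s(y)) \le y$ and $y \le f(s(y))$ respectively, and $T_0$ finishes; Borelness follows since $\{y \mid a_k(y) = t\}$ is, by induction on $k$, a countable Boolean combination of open sets. What each approach buys: the paper's proof is very short given the surrounding development and the black-boxed uniformization theorem, and the fiberwise-comeager viewpoint gives for free that the section can be chosen landing in any prescribed fiberwise comeager Borel set; your proof is elementary and self-contained (no uniformization theorem, no Baire category), and it yields an explicit bound on the Borel rank of the section in terms of the number of stages. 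Both are legitimate; yours would make a reasonable alternative proof, provided you flesh out the bookkeeping and the inductive Borelness verification, which you have correctly identified as the only delicate points.
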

\begin{proof}
Apply the large section uniformization theorem \cite[18.6]{Kec} to the inverse graph relation of $f$, $R := \{(y, x) \in Y \times X \mid f(x) = y\}$, using the $\sigma$-ideals
\begin{align*}
\@I_y := \{A \subseteq X \mid A \cap f^{-1}(y) \text{ is meager in } f^{-1}(y)\}
\end{align*}
for each $y \in Y$.  Clearly each fiber $R_y = f^{-1}(y)$ is $\emptyset$ or $\not\in \@I_y$; and $y |-> \@I_y$ is Borel-on-Borel (see \cite[18.5]{Kec}), since for every (quasi-)Polish space $Z$ and Borel set $B \subseteq Z \times Y \times X$, we have
\begin{align*}
B_{z,y} := \{x \in X \mid (z, y, x) \in B\} \in \@I_y
&\iff \{x \in f^{-1}(y) \mid (z, y, x) \in B\} \text{ is meager in } f^{-1}(y) \\
&\iff y \not\in \exists^*_f(\{x \in X \mid (z, f(x), x) \in B\}) \\
&\iff (z, y) \not\in \exists^*_{Z \times f}(\{(z, x) \in Z \times X \mid (z, f(x), x) \in B\})
\end{align*}
(where $Z \times f : Z \times X -> Z \times Y$ takes $(z, x)$ to $(z, f(x))$), which is Borel in $(z, y)$ by \cref{thm:bairequant-sigma0xi}.  It follows that $R$ has a Borel uniformizing function $s : f(X) -> X$, which is the desired section.
\end{proof}

A topological space $X$ is \defn{irreducible} if $X \ne \emptyset$, and whenever $X = F \cup G$ with $F, G$ closed, then either $X = F$ or $X = G$.  A topological space $X$ is \defn{sober} if $X$ is $T_0$, and for every irreducible closed $F \subseteq X$, there is a (unique, by $T_0$) $x \in X$ such that $F = \-{\{x\}}$.

\begin{theorem}[{\cite[Corollary~39]{deB}}]
\label{thm:qpol-sober}
Quasi-Polish spaces are sober.
\end{theorem}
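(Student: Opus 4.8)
The plan is to reduce soberness to finding a \emph{generic point} (a point whose closure is the whole space) in an irreducible quasi-Polish space, and then apply the Baire category theorem. First, the $T_0$ requirement is automatic: any quasi-Polish space embeds into some $\#S^I$, which is $T_0$, so a closed set can be the closure of at most one point, giving the uniqueness half of soberness.

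Now let $X$ be quasi-Polish and $F \subseteq X$ be irreducible closed. By \cref{thm:qpol=2cnt+ccor}, $X$ is second-countable, and by the Baire category theorem it is completely Baire; hence the closed subspace $F$ is itself second-countable and Baire. I claim $F$ has a generic point, which suffices, since the closure $\-{\{x\}}$ computed in $F$ agrees with that computed in $X$ (as $F$ is closed in $X$). Fix a countable basis $\{U_n\}_n$ of open sets of $F$. Irreducibility of $F$ is equivalent to the assertion that every nonempty open subset of $F$ is dense in $F$: if $U, V \subseteq F$ were nonempty open with $U \cap V = \emptyset$, then $F = \neg U \cup \neg V$ would exhibit $F$ as a union of two proper closed sets, contradicting irreducibility; conversely a nonempty dense open set meets every nonempty open set. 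In particular each nonempty $U_n$ is dense in $F$.

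Since $F$ is Baire, $G := \bigcap \{U_n \mid U_n \ne \emptyset\}$ is dense in $F$, hence nonempty; pick $x \in G$. Then $x$ lies in every nonempty basic open subset of $F$, hence in every nonempty open subset of $F$, which is precisely to say $\-{\{x\}} = F$ (the complement $F \setminus \-{\{x\}}$ is open and does not contain $x$, so it must be empty). This produces the required generic point and completes the proof. I expect no genuine obstacle here once the Baire category theorem is in hand; the only points requiring care are the standard translations between irreducibility, density of nonempty open sets, and genericity of a point, together with the reduction from an arbitrary irreducible closed subset $F \subseteq X$ to the problem of finding a generic point of $F$ regarded as a space in its own right.
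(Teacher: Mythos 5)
Your proposal is correct and follows essentially the same route as the paper: both arguments observe that irreducibility of $F$ makes every nonempty (sub)basic open set of $F$ dense in $F$, apply the Baire category theorem to the closed (hence Baire) subspace $F$ to intersect countably many of these, and check that any point of the intersection is generic. The only cosmetic difference is that you work with a countable basis of $F$ itself while the paper intersects $F$ with a countable subbasis of $X$.
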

\begin{proof}
Let $X$ be quasi-Polish and $F \subseteq X$ be irreducible closed.  Let $\@U$ be a countable basis of open sets in $X$.  For every open $U, V \subseteq X$ which both intersect $F$, by irreducibility, also $F \cap U \cap V \ne \emptyset$.  Thus for every $U \in \@U$ such that $F \cap U \ne \emptyset$, $F \cap U \subseteq F$ is dense.  So by Baire category, there is some $x \in F \cap \bigcap \{U \in \@U \mid F \cap U \ne \emptyset\}$, which is easily seen to satisfy $\-{\{x\}} = F$.
\end{proof}

\section{Posites}
\label{sec:posite}

In this section, we study a special kind of copresentation, one where all of the relations between open sets are of the form ``open sets $V_i$ cover $U$''.

A \defn{posite}\footnote{This notion comes from locale theory; see \cite[II~2.11]{Jstone}.} $(\@U, |>)$ consists of a poset $\@U$ and a binary relation $|>$ between subsets of $\@U$ and elements of $\@U$.  We think of elements $U \in \@U$ as names for basic open sets, and of the relation $\@V = \{V_i\}_i |> U$ for $U, V_i \in \@U$ as meaning ``$\{V_i\}_i$ cover $U$''.  The relation $|>$ is required to satisfy:
\begin{gather}
\label{eq:posite-pcpl}
V \in \@V |> U \implies V \le U, \\
\label{eq:posite-stable}
\@V |> U \ge U' \implies \exists \@V' |> U'\; \forall V' \in \@V'\; \exists V \in \@V\, (V' \le V)
\end{gather}
(the second condition says ``every open cover of $U$ refines to an open cover of $U' \subseteq U$'').

Every posite $(\@U, |>)$ determines a topological space, as follows.  For a poset $\@U$, let
\begin{align*}
\Up(\@U) := \{\@A \in \#S^\@U \mid \forall U \le V \in \@U\, (U \in \@A \implies V \in \@A)\}
\end{align*}
denote the space of upward-closed subsets of $\@U$, and let
\begin{align*}
\Filt(\@U) := \{\@A \in \Up(\@U) \mid \@A \ne \emptyset \AND \forall U, V \in \@A\, \exists W \in \@A\, (W \le U \AND W \le V)\}
\end{align*}
denote the space of filters in $\@U$.  Now for a posite $(\@U, |>)$, let
\begin{align*}
\Coidl(\@U, |>) := \{\@A \in \Up(\@U) \mid \forall \@V |> U\, (U \in \@A \implies \exists V \in \@V\, (V \in \@A))\}
\end{align*}
denote the space of \defn{$|>$-coideals} in $\@U$, i.e., the complements of \defn{$|>$-ideals} $\@A \in \Idl(\@U, |>)$, which are downward-closed subsets $\@A \subseteq \@U$ such that $\forall \@V |> U\, (\@V \subseteq \@A \implies U \in \@A)$.  Finally, let
\begin{align*}
\PFilt(\@U, |>) := \Filt(\@U) \cap \Coidl(\@U, |>) \subseteq \#S^\@U
\end{align*}
denote the space of \defn{$|>$-prime filters} in $\@U$; we call $\PFilt(\@U, |>)$ the \defn{space copresented by $(\@U, |>)$}.  We think of $\@X \in \PFilt(\@U, |>)$ as a ``point'', where $U \in \@X$ are the ``basic neighborhoods'' of $\@X$.

A posite $(\@U, |>)$ is \defn{countable} if both $\@U$ and $|>$ (as a set of pairs) are countable.  In that case, the sets $\Up(\@U), \Filt(\@U), \Coidl(\@U, |>), \PFilt(\@U, |>) \subseteq \#S^\@U$ are $\*\Pi^0_2$, hence quasi-Polish; a $\*\Pi^0_2$ definition, i.e., countable copresentation, of $\PFilt(\@U, |>) \subseteq \#S^\@U$ is given by combining the above definitions of $\Up(\@U), \Filt(\@U), \Coidl(\@U, |>)$.  The key fact about countable posites is the following ``prime ideal theorem'', which says that the copresented spaces have ``enough points'':

\begin{theorem}[see {\cite[3.14]{Hec}}]
\label{thm:posite-ctb-pit}
Let $(\@U, |>)$ be a countable posite.  Then for every $\@A \in \Coidl(\@U, |>)$ and $W \in \@A$, there is a $\@X \in \PFilt(\@U, |>)$ such that $W \in \@X \subseteq \@A$.
\end{theorem}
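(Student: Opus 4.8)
The plan is to build $\@X$ as the upward closure of a descending chain $W = p_0 \ge p_1 \ge p_2 \ge \cdots$ of elements of $\@A$, constructed generically so as to meet all the requirements needed to land in $\PFilt(\@U, |>)$. Being the up-closure of a descending chain automatically makes $\@X$ a filter; $\@X \subseteq \@A$ will follow since $\@A \in \Up(\@U)$ and each $p_k \in \@A$; and $W = p_0 \in \@X$. The only real work is to arrange that $\@X$ is a $|>$-coideal, i.e., that whenever $\@V |> Y$ with $Y \in \@X$, some $V \in \@V$ lies in $\@X$.

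Since $(\@U, |>)$ is countable, I would fix an enumeration $(\@V_k, Y_k)_{k \in \#N}$ of the relation $|>$ in which every pair occurs infinitely often (if $|>$ is empty there is nothing to arrange and $\@X := \up W$ works, so assume it is nonempty). Set $p_0 := W$. Given $p_k \in \@A$, examine the $k$th requirement $(\@V_k, Y_k)$. If $p_k \le Y_k$, apply the stability axiom \eqref{eq:posite-stable} to $\@V_k |> Y_k \ge p_k$ to obtain a cover $\@V_k' |> p_k$ each of whose members lies below some member of $\@V_k$; since $p_k \in \@A$ and $\@A$ is a $|>$-coideal, some $V' \in \@V_k'$ lies in $\@A$, and I set $p_{k+1} := V'$. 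Then $p_{k+1} \le p_k$ by \eqref{eq:posite-pcpl}, $p_{k+1} \in \@A$, and $p_{k+1} \le V$ for some $V \in \@V_k$. If instead $p_k \not\le Y_k$, set $p_{k+1} := p_k$.

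Finally, put $\@X := \{U \in \@U \mid \exists k\, (p_k \le U)\} = \bigcup_k \up p_k$. It is upward-closed; it is a filter since the $p_k$ form a descending chain, so any two of its members have a common lower bound $p_{\max(k,k')}$ within $\@X$; it contains $W = p_0$; and $\@X \subseteq \@A$ since $\@A$ is upward-closed and contains every $p_k$. For the coideal property, suppose $\@V |> Y$ with $Y \in \@X$, say $p_n \le Y$. Choose $k \ge n$ with $(\@V_k, Y_k) = (\@V, Y)$; then $p_k \le p_n \le Y = Y_k$, so the construction put $p_{k+1} \le V$ for some $V \in \@V_k = \@V$, giving $V \in \@X$. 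Hence $\@X \in \Filt(\@U) \cap \Coidl(\@U, |>) = \PFilt(\@U, |>)$, as required.

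The crux is the successor step: it is precisely there that the two posite axioms are used in tandem — \eqref{eq:posite-stable} to transport the cover $\@V_k$ of $Y_k$ down to a cover of $p_k$, and the coideal property of $\@A$ to extract a member of that refined cover still inside $\@A$ — and the ``each requirement infinitely often'' bookkeeping is what makes the local move succeed even for a requirement whose hypothesis $Y \in \@X$ only becomes visible at a later stage, since the chain $p_k$ keeps descending.
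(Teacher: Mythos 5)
Your proof is correct, but it takes a genuinely different route from the paper's. The paper deduces the statement from the Baire category theorem for countably correlated spaces: it forms $K := \{\@B \in \Filt(\@U) \mid W \in \@B \subseteq \@A\}$, observes that $K \subseteq \#S^\@U$ is $\*\Pi^0_2$ (hence Baire), and shows that each set $Q_{\@V,U}$ of upward-closed families satisfying the single covering requirement $\@V |> U$ is dense in $K$; a point of $K \cap \bigcap_{\@V |> U} Q_{\@V,U}$ is then the desired $\@X$. You instead build $\@X$ directly as the up-closure of a descending chain $p_0 \ge p_1 \ge \dotsb$ in $\@A$, using a bookkeeping enumeration of $|>$ in which every pair appears infinitely often. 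The key local move is identical in both arguments --- given $p_k \le Y_k$, use \eqref{eq:posite-stable} to refine $\@V_k$ to a cover of $p_k$ and then the coideal property of $\@A$ to select a member of that refined cover lying in $\@A$ (note that this also rules out the refined cover being empty) --- and your ``infinitely often'' device correctly handles requirements whose hypothesis $Y \in \@X$ only becomes visible late, which is the one place a naive single-pass enumeration would fail. In effect you have inlined the proof of the Baire category theorem into the construction. What the paper's version buys is brevity and a conceptual point (echoing Heckmann's ``proved with the Baire category theorem''): the verification reduces to a density check. What yours buys is self-containedness and elementarity: it needs no topology on $\#S^\@U$, no $\*\Pi^0_2$ computation for $K$, and no prior Baire category theorem.
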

\begin{proof}
Let
\begin{align*}
K := \{\@B \in \Filt(\@U) \mid W \in \@B \subseteq \@A\};
\end{align*}
then $K = \Filt(\@U) \cap \up \{W\} \cap \-{\{\@A\}} \subseteq \#S^\@U$ is $\*\Pi^0_2$.  For each $\@V |> U$, let
\begin{align*}
Q_{\@V,U} := \{\@B \in \Up(\@U) \mid U \in \@B \implies \exists V \in \@V\, (V \in \@B)\};
\end{align*}
then $Q_{\@V,U} \subseteq \#S^\@U$ is $\*\Pi^0_2$.  We claim that each $Q_{\@V,U}$ is dense in $K$.  This will imply by Baire category that $\emptyset \ne K \cap \bigcap_{\@V |> U} Q_{\@V,U} = K \cap \Coidl(\@U, |>) = \{\@X \in \PFilt(\@U, |>) \mid W \in \@X \subseteq \@A\}$, as desired.

To prove the claim, let $\up \@C \subseteq \#S^\@U$ be a basic open set for some finite $\@C \subseteq \@U$ such that $K \cap \up \@C \ne \emptyset$; we must show that $K \cap \up \@C \cap Q_{\@V,U} \ne \emptyset$.  Let $\@D \in K \cap \up \@C$, i.e., $\@D \in \Filt(\@U)$ with $W \in \@D \subseteq \@A$ and $\@C \subseteq \@D$.  If $U \not\in \@D$, then clearly $\@D \in Q_{\@V,U}$, so we are done.  Otherwise, $U \in \@D$, so since $\@D$ is a filter, there is some $U' \in \@D$ with $U' \le U, W$ and $U' \le C$ for all $C \in \@C$.  By \eqref{eq:posite-stable}, there is some $\@V' |> U'$ such that for every $V' \in \@V'$ there is some $V \in \@V$ with $V' \le V$.  Since $U' \in \@D \subseteq \@A$ and $\@A \in \Coidl(\@U, |>)$, there is some $V' \in \@V$ with $V' \in \@A$.  Then it is easily verified that $\up V' \in K \cap \up \@C \cap Q_{\@V,U}$.
\end{proof}

\begin{corollary}
\label{thm:posite-ctb-spatial}
Let $(\@U, |>)$ be a countable posite.  Then we have a bijection
\begin{align*}
\Idl(\@U, |>) &\cong \{\text{open subsets of $\PFilt(\@U, |>)$}\} \\
\@A &|-> \bigcup_{U \in \@A} (\PFilt(\@U, |>) \cap \up \{U\}) \\
\{U \in \@U \mid \PFilt(\@U, |>) \cap \up \{U\} \subseteq C\} &<-| C.
\end{align*}
\end{corollary}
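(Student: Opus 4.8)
The plan is to verify that the two displayed assignments are well-defined and mutually inverse; throughout I write $X := \PFilt(\@U, |>)$ and $[U] := X \cap \up\{U\} = \{\@X \in X \mid U \in \@X\}$ for $U \in \@U$. The first preliminary step is to observe that the sets $[U]$ form a basis for the topology of $X$: the subspace topology on $X \subseteq \#S^\@U$ is generated by the sets $X \cap \up s = \bigcap_{U \in s}[U]$ for finite $s \subseteq \@U$, and since each $\@X \in X$ is a nonempty downward-directed filter one checks directly that $\bigcap_{U \in s}[U] = \bigcup\{[W] \mid W \le U \text{ for all } U \in s\}$. Consequently every open $C \subseteq X$ equals $\bigcup\{[U] \mid [U] \subseteq C\}$, which is precisely the image of $\@A_C := \{U \in \@U \mid [U] \subseteq C\}$ under the upward map; this already takes care of the round trip $C \mapsto \@A_C \mapsto C$.

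Next I would check that $\@A_C$ really lies in $\Idl(\@U, |>)$. Downward-closure is immediate since $U' \le U$ forces $[U'] \subseteq [U]$; and for the defining closure property of $|>$-ideals, given $\@V |> U$ with $\@V \subseteq \@A_C$ and any $\@X \in [U]$, the fact that $\@X \in \Coidl(\@U, |>)$ produces some $V \in \@V$ with $V \in \@X$, so $\@X \in [V] \subseteq C$; hence $[U] \subseteq C$, i.e.\ $U \in \@A_C$. The downward map $\@A \mapsto C_\@A := \bigcup_{U \in \@A}[U]$ obviously lands among the open sets, so nothing more is needed there.

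It remains to treat the round trip $\@A \mapsto C_\@A \mapsto \@A_{C_\@A}$, i.e.\ to show $\@A_{C_\@A} = \@A$ for a $|>$-ideal $\@A$. The inclusion $\@A \subseteq \@A_{C_\@A}$ is trivial. For the reverse inclusion — which is the only substantive point in the argument — I would argue contrapositively: if $U \notin \@A$, then $U$ belongs to the $|>$-coideal $\@U \setminus \@A$, so by the prime ideal theorem \cref{thm:posite-ctb-pit} there is a prime filter $\@X \in X$ with $U \in \@X \subseteq \@U \setminus \@A$; this $\@X$ lies in $[U]$ but in no $[V]$ with $V \in \@A$, so $[U] \not\subseteq C_\@A$ and therefore $U \notin \@A_{C_\@A}$. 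Everything in this plan except this last appeal to ``enough points'' is a routine unwinding of definitions, so \cref{thm:posite-ctb-pit} is where the real work sits; note also that the two maps are visibly inclusion-preserving, so the resulting bijection is automatically an order isomorphism.
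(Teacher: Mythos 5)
Your proposal is correct and follows essentially the same route as the paper's proof: the filter/directedness property of points to show the sets $\PFilt(\@U,|>) \cap \up\{U\}$ form a basis (giving the round trip starting from an open set), and \cref{thm:posite-ctb-pit} applied to the coideal $\@U \setminus \@A$ for the round trip starting from a $|>$-ideal. Your explicit verification that $C \mapsto \{U \mid [U] \subseteq C\}$ lands in $\Idl(\@U,|>)$ is a detail the paper leaves implicit, but it is the same argument.
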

\begin{proof}
An open set $C \subseteq \PFilt(\@U, |>)$ is a union of basic open sets $\PFilt(\@U, |>) \cap \up \@S$ for finite $\@S \subseteq \@U$; since $\PFilt(\@U, |>)$ consists of filters, we have
\begin{align*}
\PFilt(\@U, |>) \cap \up \@S = \bigcup_{U \in \bigcap_{V \in \@S} \down V} (\PFilt(\@U, |>) \cap \up \{U\}),
\end{align*}
which easily implies that the two maps compose to the identity on the right-hand side.

For the other composite, let $\@A \in \Idl(\@U, |>)$; we must show that
\begin{align*}
\@A = \{V \in \@U \mid \PFilt(\@U, |>) \cap \up \{V\} \subseteq \bigcup_{U \in \@A} \up \{U\}\}.
\end{align*}
$\subseteq$ is obvious.  Conversely, for $V \not\in \@A$, by \cref{thm:posite-ctb-pit} there is some $\@X \in \PFilt(\@U, |>)$ such that $V \in \@X \subseteq \neg \@A$; then $\@X \in \PFilt(\@U, |>) \cap \up \{V\} \setminus \bigcup_{U \in \@A} \up \{U\}$.
\end{proof}

Now let $X$ be a $T_0$ space.  A \defn{basic posite for $X$} is a posite $(\@U, |>)$ where $\@U$ is a basis of open sets in $X$ (ordered by inclusion) and such that the canonical embedding $e_\@U : X -> \#S^\@U$ has image $\PFilt(\@U, |>)$, thus exhibiting $X$ as (a homeomorphic copy of) the space copresented by $(\@U, |>)$:
\begin{align}
\label{eq:posite-copres}
e_\@U : X &\cong \PFilt(\@U, |>) \subseteq \#S^\@U.
\end{align}
Note that since $\@U$ is a basis, we always have $e_\@U(X) \subseteq \Filt(\@U, |>)$.  The condition $e_\@U(X) \subseteq \Coidl(\@U, |>)$ (equivalently, $e_\@U(X) \subseteq \PFilt(\@U, |>)$) is equivalent to
\begin{align}
\label{eq:posite-subcan}
\@V |> U \implies \bigcup \@V = U, \tag{\ref*{eq:posite-copres}a}
\end{align}
i.e., that the covering relations specified by $|>$ actually hold in $X$.

\begin{proposition}
\label{thm:qpol-posite-ctb}
Let $X$ be a quasi-Polish space.  For any countable open basis $\@U$ for $X$, there is a countable basic posite $(\@U, |>)$ for $X$.
\end{proposition}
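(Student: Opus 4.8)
The plan is to transport the given basis $\@U$ through a $\*\Pi^0_2$-definition of $X$ and convert each imposed relation into a family of genuine covers, indexed so that the stability axiom \eqref{eq:posite-stable} becomes automatic.

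First I would record a convenient $\*\Pi^0_2$-definition of $X$. Since $X$ is $T_0$ and $\@U$ is a basis, $e_\@U$ embeds $X$ onto $e_\@U(X) \subseteq \#S^\@U$; as $\@U$ is countable, $\#S^\@U$ is quasi-Polish by \cref{thm:qpol-prod}, so $e_\@U(X)$ is a quasi-Polish subspace and hence $\*\Pi^0_2$ in $\#S^\@U$ by \cref{thm:qpol-sub=pi02}. Writing each open set in a $\*\Pi^0_2$-presentation $e_\@U(X) = \bigcap_n (\neg\@P_n \cup \@Q_n)$ as a union of basic opens and re-indexing (the routine ``$\neg\bigcup_j \up\@S_j = \bigcap_j \neg\up\@S_j$'' step), we may take $e_\@U(X) = \bigcap_n (\neg\up\@S_n \cup \@Q_n)$ with each $\@S_n \subseteq \@U$ finite and $\@Q_n = \bigcup_k \up\@T_{n,k}$ for finite $\@T_{n,k} \subseteq \@U$. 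Since $e_\@U$ is injective and $e_\@U(x) \supseteq \@S$ iff $x \in \bigcap\@S$, unwinding the $n$-th constraint gives the inclusion of open subsets $\bigcap\@S_n \subseteq \bigcup_k \bigcap\@T_{n,k}$ of $X$, which I will call $(\dagger)$.

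Next I would define $|>$: for each $n$ and each $W \in \@U$ with $W \subseteq \bigcap\@S_n$, declare
\[
\@V_{n,W} := \bigl\{ V \in \@U \mid V \subseteq W \text{ and } V \subseteq \bigcap\@T_{n,k} \text{ for some } k \bigr\} \;|>\; W .
\]
Then $|>$ is countable, being indexed by the countably many admissible pairs $(n,W)$, and \eqref{eq:posite-pcpl} holds by construction. From $(\dagger)$ and $W \subseteq \bigcap\@S_n$ we get $W = \bigcup_k (W \cap \bigcap\@T_{n,k})$, and since $\@U$ is a basis this union equals $\bigcup\@V_{n,W}$; thus every declared cover genuinely holds in $X$, which by \eqref{eq:posite-subcan} gives $e_\@U(X) \subseteq \Coidl(\@U,|>)$, hence $e_\@U(X) \subseteq \PFilt(\@U,|>)$ since $e_\@U(X) \subseteq \Filt(\@U)$ always. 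For \eqref{eq:posite-stable}, note that $\{W \in \@U \mid W \subseteq \bigcap\@S_n\}$ is downward-closed, so if $W' \subseteq W \subseteq \bigcap\@S_n$ then $\@V_{n,W'} |> W'$ is already one of the declared covers and plainly refines $\@V_{n,W}$. Hence $(\@U,|>)$ is a countable posite.

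It remains to prove $\PFilt(\@U,|>) \subseteq e_\@U(X)$, which is the heart of the matter. Let $\@A$ be a $|>$-prime filter; I must verify $\@A \in \neg\up\@S_n \cup \@Q_n$ for each $n$. If $\@S_n \subseteq \@A$, then since $\@A$ is a filter some $W \in \@A$ has $W \subseteq \bigcap\@S_n$; as $\@V_{n,W} |> W$ and $W \in \@A$, the coideal condition yields $V \in \@A \cap \@V_{n,W}$, so $V \subseteq \bigcap\@T_{n,k}$ for some $k$, whence $\@T_{n,k} \subseteq \@A$ by upward-closure, i.e.\ $\@A \in \up\@T_{n,k} \subseteq \@Q_n$. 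Therefore $\@A \in \bigcap_n (\neg\up\@S_n \cup \@Q_n) = e_\@U(X)$, so $e_\@U(X) = \PFilt(\@U,|>)$ and $(\@U,|>)$ is a basic posite for $X$. I expect the main obstacle to be arranging the declared covers to meet three demands simultaneously: each must hold genuinely in $X$ (for $e_\@U(X) \subseteq \PFilt(\@U,|>)$), there must be enough of them to force every $|>$-prime filter to be a point (the inclusion just proved), and they must be closed under the refinement required by \eqref{eq:posite-stable}; indexing the covers of a basic open $W$ by the ``constraint number'' $n$ rather than by $W$ alone is exactly the device that reconciles the latter two.
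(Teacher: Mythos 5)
Your proposal is correct and follows essentially the same route as the paper: obtain a $\*\Pi^0_2$ presentation of $e_\@U(X)$ with basic-open hypotheses via \cref{thm:qpol-sub=pi02}, declare for each constraint $n$ and each basic $W$ contained in $\bigcap\@S_n$ the cover by those $V\in\@U$ lying below $W$ and below some $\bigcap\@T_{n,k}$, get stability from downward-closure of the admissible $W$, and recover the constraints on a prime filter from the filter and coideal conditions. The only cosmetic difference is notation; no gaps.
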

\begin{proof}
By \cref{thm:qpol-sub=pi02}, $e_\@U(X) \subseteq \#S^\@U$ is $\*\Pi^0_2$.  Let $e_\@U(X) = \bigcap_i (\neg S_i \cup T_i)$ where $S_i, T_i \subseteq \#S^\@U$ are open.  We may assume that the $S_i$ are basic open, i.e.,
\begin{align*}
S_i = \up \@U_i
\end{align*}
for some finite $\@U_i \subseteq \@U$.  Write each $T_i$ as a countable union of basic open sets
\begin{align*}
T_i = \bigcup_{\@V \in \&V_i} \up \@V
\end{align*}
for some countable set $\&V_i$ of finite $\@V \subseteq \@U$.  Let $|>$ consist of the relations
\begin{align*}
\@V_{i,U} := \{V \in \@U \mid \exists \@V \in \&V_i\, (V \subseteq U \cap \bigcap \@V)\} |> U
\end{align*}
for each $\@U \ni U \subseteq \bigcap \@U_i$.

To check \eqref{eq:posite-stable}: for $\@V_{i,U} |> U \supseteq U'$, it is easily seen that $\@V_{i,U'} |> U'$.

To check \eqref{eq:posite-subcan} (which implies \eqref{eq:posite-pcpl}): for $\@V_{i,U} |> U$, we have $\bigcup \@V_{i,U} = \bigcup_{\@V \in \&V_i} (U \cap \bigcap \@V) = U \cap \bigcup_{\@V \in \&V_i} \bigcap \@V = U \cap e_\@U^{-1}(T_i)$; since $U \subseteq \bigcap \@U_i = e_\@U^{-1}(S_i)$ and $U \subseteq X \subseteq e_\@U^{-1}(\neg S_i \cup T_i)$, we have $U \subseteq e_\@U^{-1}(T_i)$, whence $\bigcup \@V_{i,U} = U$.

Finally, to check $\supseteq$ in \eqref{eq:posite-copres}: let $\@X \in \PFilt(\@U, |>)$; we must show $\@X \in \bigcap_i (\neg S_i \cup T_i) = e_\@U(X)$.  If $\@X \in S_i = \up \@U_i$, i.e., $\@U_i \subseteq \@X$, then since $\@X$ is a filter, there is some $U \in \@X$ with $U \subseteq \bigcap \@U_i$; since $\@X$ is a $|>$-coideal and $\@V_{i,U} |> U$, there is some $\@V \in \&V_i$ and $V \subseteq U \cap \bigcap \@V$ with $V \in \@X$, whence $\@V \subseteq \@X$ since $\@X$ is upward-closed, whence $\@X \in \up \@V \subseteq T_i$.
\end{proof}

\begin{corollary}
A topological space $X$ is quasi-Polish iff it is homeomorphic to the space copresented by a countable posite.  \qed
\end{corollary}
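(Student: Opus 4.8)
The plan is simply to package together \cref{thm:qpol-posite-ctb} and the observation, already recorded in the text, that a countable posite copresents a $\*\Pi^0_2$ subspace of a countable power of $\#S$; there is no real content beyond what those provide.

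For the forward implication, suppose $X$ is quasi-Polish. Then $X$ is (homeomorphic to) a $\*\Pi^0_2$ subspace of $\#S^\#N$, hence second-countable, so it has a countable open basis $\@U$. Applying \cref{thm:qpol-posite-ctb} to this $\@U$ produces a countable basic posite $(\@U, |>)$ for $X$, and by the defining property of a basic posite, namely \eqref{eq:posite-copres}, the canonical embedding $e_\@U$ witnesses $X \cong \PFilt(\@U, |>)$, i.e., $X$ is homeomorphic to the space copresented by $(\@U, |>)$.

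For the reverse implication, let $(\@U, |>)$ be a countable posite and suppose $X \cong \PFilt(\@U, |>)$. Since $\@U$ and $|>$ are countable, the conditions defining $\Up(\@U)$, $\Filt(\@U)$, and $\Coidl(\@U, |>)$ inside $\#S^\@U$ are each countable conjunctions of implications between finite unions of basic open sets, so $\PFilt(\@U, |>) = \Filt(\@U) \cap \Coidl(\@U, |>) \subseteq \#S^\@U$ is $\*\Pi^0_2$ (as already noted just before \cref{thm:posite-ctb-pit}). As $\@U$ is countable, $\#S^\@U$ is a countable power of $\#S$, so a $\*\Pi^0_2$ subspace of it is quasi-Polish by definition (or by \cref{thm:qpol-pi02}); hence $X$ is quasi-Polish. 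I expect no real obstacle here; the only point worth stating explicitly is that a quasi-Polish space is second-countable, which is what makes \cref{thm:qpol-posite-ctb} applicable in the forward direction.
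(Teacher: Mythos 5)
Your proof is correct and is exactly the argument the paper intends by marking the corollary with \qed: the forward direction is \cref{thm:qpol-posite-ctb}, and the reverse direction is the observation, recorded just before \cref{thm:posite-ctb-pit}, that $\PFilt(\@U, |>) \subseteq \#S^\@U$ is $\*\Pi^0_2$ for a countable posite. Nothing to add.
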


\section{Lower powerspaces}
\label{sec:lowpow}

Let $X$ be a topological space.  The \defn{lower powerspace} $\@F(X)$ is the space of closed sets in $X$, with topology generated by the subbasic open sets
\begin{align*}
\Dia U := \{F \in \@F(X) \mid F \cap U \ne \emptyset\}
\end{align*}
for open sets $U \subseteq X$.

We have a canonical map
\begin{align*}
\down : X &--> \@F(X) \\
x &|--> \-{\{x\}},
\end{align*}
such that $\down^{-1}(\Dia U) = U$; thus $\down$ is continuous, and an embedding if $X$ is $T_0$.

If $X$ is quasi-Polish, then it follows from \cref{thm:qpol-sub-pi02} that $\down(X) \subseteq \@F(X)$ is $\*\Pi^0_2$.  A simple $\*\Pi^0_2$ definition is provided by

\begin{proposition}[{\cite[Proposition~3]{dBK}}]
\label{thm:down-pi02}
If $X$ is second-countable and sober, then $\down(X) \subseteq \@F(X)$ is $\*\Pi^0_2$.
\end{proposition}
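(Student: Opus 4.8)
The plan is to give an explicit $\*\Pi^0_2$ definition of $\down(X) \subseteq \@F(X)$ using a countable subbasis $\@U$ of $X$. The intuition is that soberness lets us characterize, among all closed sets $F$, exactly which ones are closures of points: these are precisely the \emph{irreducible} ones (together with soberness guaranteeing such a point exists). So the conditions to impose should say: (a) $F$ is nonempty, and (b) $F$ is irreducible, i.e., whenever $F$ meets two open sets it meets their intersection. Both conditions must be phrased in terms of the subbasic opens $\Dia U$ for $U \in \@U$, so that they become countably many relations between open sets in $\@F(X)$.

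Concretely, I would fix a countable subbasis $\@U$ for $X$, closed under finite intersections (so that $\@U$ is in fact a countable basis), and claim
\begin{align*}
\down(X) = \Dia X \cap \bigcap_{U, V \in \@U} \bigl(\neg(\Dia U \cap \Dia V) \cup \Dia(U \cap V)\bigr) \subseteq \@F(X).
\end{align*}
The right-hand side is manifestly $\*\Pi^0_2$ since $\@U$ is countable and each $\Dia U$, $\Dia(U \cap V)$ is open. Note $\Dia X$ just says $F \ne \emptyset$. The inclusion $\subseteq$ is routine: for $x \in X$, $\down x = \-{\{x\}}$ is nonempty, and if $\down x$ meets $U$ and $V$ then $x \in U$ and $x \in V$, so $x \in U \cap V$, so $\down x$ meets $U \cap V$.

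For $\supseteq$, take $F$ in the right-hand side. First I would check that $F$ is irreducible: if $F = F_1 \cup F_2$ with $F_i$ proper closed subsets, pick $x_i \in F \setminus F_i$; since $\@U$ is a basis there are $U_i \in \@U$ with $x_i \in U_i$ and $U_i \cap F_i = \emptyset$, so $U_i$ witnesses $F \in \Dia U_i$; then the relation forces $F \in \Dia(U_1 \cap U_2)$, i.e., some $y \in F \cap U_1 \cap U_2$, but $y \notin F_1$ (as $y \in U_1$) and $y \notin F_2$ (as $y \in U_2$), contradicting $F = F_1 \cup F_2$. Since also $F \ne \emptyset$, $F$ is irreducible closed, so by soberness of $X$ (using second-countability only insofar as it gives a countable basis — actually soberness is the real hypothesis here) there is $x \in X$ with $F = \-{\{x\}} = \down x \in \down(X)$, as desired. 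I expect the only mildly delicate point to be making sure the $\*\Pi^0_2$ family is genuinely countable, which is handled by taking $\@U$ countable and closed under finite intersections; everything else is a direct unwinding of the definition of $\Dia$ together with the definition of soberness. The hypothesis "second-countable" is used precisely to get such a countable $\@U$; "sober" is used precisely in the last step to produce the point $x$.
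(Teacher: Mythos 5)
Your proposal is correct and takes essentially the same approach as the paper: the paper's proof fixes a countable basis $\@U$ and writes $\down(X)$ as exactly the set $\{F \in \@F(X) \mid F \in \Dia X \text{ and } \forall U, V \in \@U\,(F \in \Dia U \cap \Dia V \implies F \in \Dia(U \cap V))\}$, noting that this is precisely the set of irreducible closed sets. Your write-up merely spells out the verification that the paper labels ``easily seen,'' and your irreducibility argument and use of sobriety are exactly the intended ones.
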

\begin{proof}
Let $\@U$ be a countable basis of open sets in $X$.  Then
\begin{align*}
\down(X) = \{F \in \@F(X) \mid F \in \Dia X \AND \forall U, V \in \@U\, (F \in \Dia U \cap \Dia V \implies F \in \Dia(U \cap V))\},
\end{align*}
where the right-hand is easily seen to consist precisely of the irreducible closed sets.
\end{proof}

It is easily seen that the specialization order on $\@F(X)$ is inclusion.  Thus, for $X \ne \emptyset$, $\@F(X)$ is never $T_1$; in particular, $\@F(X)$ is never Polish, even if $X$ is.  One of the main advantages of working with quasi-Polish spaces is

\begin{theorem}[{\cite[Theorem~5]{dBK}}]
\label{thm:qpol-lowpow}
If $X$ is quasi-Polish, then so is $\@F(X)$.
\end{theorem}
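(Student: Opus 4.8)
The plan is to present $\@F(X)$ as the space copresented by a countable posite, using the machinery of \cref{sec:posite}. Since $X$ is quasi-Polish it is in particular second-countable, so by \cref{thm:qpol-posite-ctb} we may fix a countable basic posite $(\@U, |>)$ for $X$ with $\@U$ a countable basis of open sets of $X$; being a basic posite for $X$, it satisfies \eqref{eq:posite-subcan}, i.e.\ $\@V |> U \implies \bigcup \@V = U$. Because $\@U$ is a basis, $\Dia(\bigcup_i U_i) = \bigcup_i \Dia U_i$ shows that $\{\Dia U \mid U \in \@U\}$ is a subbasis of $\@F(X)$, and $\@F(X)$ is $T_0$ (distinct closed sets are separated by some $\Dia U$ with $U \in \@U$). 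Hence the canonical embedding
\[
e : \@F(X) --> \#S^\@U, \qquad F |--> \{U \in \@U \mid F \cap U \ne \emptyset\},
\]
is a topological embedding, and it remains to compute its image.

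I claim $e(\@F(X)) = \Coidl(\@U, |>)$. The inclusion $\subseteq$ is routine: for closed $F$, the set $e(F)$ is clearly upward-closed, and if $\@V |> U$ with $U \in e(F)$ then $F$ meets $U = \bigcup \@V$ by \eqref{eq:posite-subcan}, so $F$ meets some $V \in \@V$, i.e.\ $V \in e(F)$. For $\supseteq$, given $\@A \in \Coidl(\@U, |>)$, its complement $\@U \setminus \@A$ is a $|>$-ideal, and I would set $F := X \setminus \bigcup\{U \in \@U \mid U \notin \@A\}$ and verify $e(F) = \@A$. One inclusion ($U \notin \@A \implies U \cap F = \emptyset$) is immediate from the definition of $F$; the reverse ($U \cap F = \emptyset \implies U \notin \@A$, i.e.\ that $U \subseteq \bigcup\{V \mid V \notin \@A\}$ forces $U \notin \@A$) is exactly the nontrivial direction of the description of $|>$-ideals in \cref{thm:posite-ctb-spatial}, applied to the ideal $\@U \setminus \@A$ and the open set $X \setminus F$ under the identification $X \cong \PFilt(\@U, |>)$. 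This is the main obstacle: a priori $|>$ encodes only countably many covers, so it is not formally obvious that every $|>$-coideal arises from an honest closed subset of $X$; that it does is precisely the content of \cref{thm:posite-ctb-spatial}, which rests in turn on the prime ideal theorem \cref{thm:posite-ctb-pit}, i.e.\ on Baire category.

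Finally, since $(\@U, |>)$ is a countable posite, $\Coidl(\@U, |>) \subseteq \#S^\@U$ is $\*\Pi^0_2$ (as noted at the start of \cref{sec:posite}), and therefore $\@F(X) \cong \Coidl(\@U, |>)$ is quasi-Polish.
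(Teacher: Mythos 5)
Your proof is correct and is essentially the paper's own argument: both fix a countable basic posite $(\@U, |>)$ for $X$ and identify $\@F(X)$ with $\Coidl(\@U, |>) \subseteq \#S^\@U$ via $F |-> \{U \in \@U \mid F \cap U \ne \emptyset\}$, invoking \cref{thm:posite-ctb-spatial} (hence the prime ideal theorem / Baire category) for surjectivity. The paper states the bijection more tersely as the complementation of the ideal--open-set correspondence, whereas you unwind the same verification explicitly; there is no substantive difference.
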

\begin{proof}
Let $(\@U, |>)$ be a countable basic posite for $X$ (\cref{thm:qpol-posite-ctb}).  By \cref{thm:posite-ctb-spatial}, we have a bijection
\begin{align*}
f_\@U : \@F(X) &\cong \Coidl(\@U, |>) \subseteq \#S^\@U \\
F &|-> \{U \in \@U \mid F \cap U \ne \emptyset\},
\end{align*}
with $f_\@U^{-1}(\Coidl(\@U, |>) \cap \up \{U\}) = \Dia U$, whence $f_\@U$ is a homeomorphism.
\end{proof}

Note that the subbasic open sets $\Dia U \subseteq \@F(X)$ are the usual generators of the Effros Borel structure \cite[\S12.C]{Kec}; thus, the Effros Borel space is the underlying Borel space of $\@F(X)$.  So we have

\begin{corollary}
If $X$ is quasi-Polish, then the Effros Borel space of $X$ is standard Borel.  \qed
\end{corollary}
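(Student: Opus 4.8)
The plan is to deduce this immediately from the preceding theorem together with \cref{thm:qpol-sborel}. The preceding theorem gives that $\@F(X)$ is a quasi-Polish space; by \cref{thm:qpol-sborel}, every quasi-Polish space is standard Borel. So the only thing left to check is that the Effros Borel structure on the closed subsets of $X$ coincides with the Borel $\sigma$-algebra of the topological space $\@F(X)$.

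First I would recall that the Effros Borel structure is, by definition, the $\sigma$-algebra generated by the sets $\Dia U$ as $U$ ranges over a countable basis $\@U$ of $X$ (which exists since $X$, being quasi-Polish, is second-countable). These are precisely a subbasis of open sets for $\@F(X)$. Conversely, every subbasic open set $\Dia V \subseteq \@F(X)$ for arbitrary open $V = \bigcup_i U_i$ with $U_i \in \@U$ is the countable union $\bigcup_i \Dia U_i$, since $F \cap V \ne \emptyset$ iff $F \cap U_i \ne \emptyset$ for some $i$. Hence the countably many sets $\Dia U$ ($U \in \@U$) form a countable subbasis for $\@F(X)$, and the $\sigma$-algebra they generate is exactly the Borel $\sigma$-algebra of $\@F(X)$. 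Therefore the Effros Borel space of $X$ is the underlying Borel space of the quasi-Polish space $\@F(X)$, which is standard Borel.

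There is essentially no obstacle here; the content is entirely carried by the preceding theorem (that $\@F(X)$ is quasi-Polish) and \cref{thm:qpol-sborel}. The only mild point to get right is the bookkeeping that a countable subbasis suffices to generate both the topology and the Borel structure of $\@F(X)$, which is routine once one observes $\Dia(\bigcup_i U_i) = \bigcup_i \Dia U_i$.
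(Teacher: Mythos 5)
Your proof is correct and follows the same route as the paper: the preceding theorem gives that $\@F(X)$ is quasi-Polish, \cref{thm:qpol-sborel} gives that quasi-Polish spaces are standard Borel, and the identification of the Effros Borel structure with the Borel $\sigma$-algebra of $\@F(X)$ is exactly the observation the paper makes (you merely spell out the routine $\Dia(\bigcup_i U_i) = \bigcup_i \Dia U_i$ bookkeeping that the paper leaves implicit).
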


Combined with \cref{thm:lqpol-qpol} and \cref{thm:pol-qpol}, this implies Tserunyan's result \cite[Theorem~A]{Lup} that the Effros Borel space of a $\sigma$-locally Polish space is standard Borel.

\section{Continuous open surjections}
\label{sec:opensurj}

Whereas a continuous open metrizable image of a Polish space is Polish \cite[8.19]{Kec}, in the quasi-Polish context we have

\begin{theorem}[{\cite[Theorem~40]{deB}}]
\label{thm:qpol-opensurj}
Let $X$ be a quasi-Polish space and $f : X -> Y$ be a continuous open surjection onto a $T_0$ space $Y$.  Then $Y$ is quasi-Polish.
\end{theorem}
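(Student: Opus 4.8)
The plan is to show that $Y$ is second-countable and countably correlated, then invoke \cref{thm:qpol=2cnt+ccor}. Second-countability is immediate: $X$ is second-countable, $f$ is a continuous open surjection, so the images of a countable basis of $X$ form a countable basis of $Y$. The real work is producing a countable $\*\Pi^0_2$ definition of $Y$ as a subspace of some $\#S^I$; equivalently, by \cref{sec:posite}, producing a countable basic posite for $Y$. The natural idea is to transport a countable basic posite $(\@U, \rhd)$ for $X$ (which exists by \cref{thm:qpol-posite-ctb}) along $f$.

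First I would fix a countable basis $\@U$ for $X$ closed under finite intersections, with a countable basic posite $(\@U, \rhd)$. Since $f$ is open, $\@V := \{f(U) \mid U \in \@U\}$ is a countable basis for $Y$. The candidate posite on $\@V$ (ordered by inclusion) should declare $f(\@W) := \{f(W) \mid W \in \@W\} \rhd f(U)$ whenever $\@W \rhd U$ in $\@U$, together with enough extra covering relations to record the (many-to-one) identifications forced by $f$: namely, whenever $f(U) \subseteq \bigcup_i f(U_i)$ for $U, U_i \in \@U$, we should be able to derive a corresponding cover. A cleaner route is to observe that $f^{-1}$ induces a frame homomorphism from the open sets of $Y$ to those of $X$, restricting to a map $\@V \to \@U$ (after choosing, for each $V \in \@V$, some preimage), and that the covering relations on $\@V$ we want are exactly those whose $f^{-1}$-images are derivable in $(\@U, \rhd)$; one then checks \eqref{eq:posite-pcpl} and \eqref{eq:posite-stable} for this relation using openness of $f$ (for stability, given a cover of $f(U)$ and $f(U') \subseteq f(U)$, pull back, refine in $\@U$, push forward).

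The crux is verifying the copresentation condition \eqref{eq:posite-copres}, i.e., that every $\rhd$-prime filter $\@X$ on $\@V$ arises as $e_\@V(y)$ for some $y \in Y$. Pulling $\@X$ back along $f^{-1} : \@V \to \@U$ gives a filter $\@X'$ on $\@U$; the design of the covering relations should guarantee $\@X'$ is a $\rhd$-prime filter on $\@U$, hence by \eqref{eq:posite-copres} for $X$ equals $e_\@U(x)$ for some $x \in X$, and then $y := f(x)$ should satisfy $e_\@V(y) = \@X$ — using that $y \in f(U) \iff x \in f^{-1}(f(U)) \supseteq$ (some basic nbhd issue) — and here surjectivity of $f$ and the fact that $\@X$ is \emph{prime} (to handle $f^{-1}(f(U)) = \bigcup$ of other basic opens) are essential. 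I expect this last verification — precisely pinning down which covering relations to throw in so that pullback sends prime filters to prime filters while keeping the posite countable — to be the main obstacle; everything else is routine transport of structure along an open map.
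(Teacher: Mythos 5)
Your reduction to \cref{thm:qpol=2cnt+ccor} is sound, and the idea of transporting a countable copresentation of $X$ along $f$ is close in spirit to what the paper does; but the step you flag as ``the main obstacle'' is in fact the entire content of the theorem, and as stated your mechanism for it does not work. The problem is that there is no canonical point of $X$ over a point of $Y$: if $\@X$ is a prime filter on $\@V = \{f(U) \mid U \in \@U\}$, the natural pullback $\@A := \{U \in \@U \mid f(U) \in \@X\}$ is \emph{not} a filter on $\@U$ (two sets $U, U'$ with $f(U), f(U') \in \@X$ need not admit a common refinement inside $\@A$, since $f(U \cap U')$ can be much smaller than $f(U) \cap f(U')$, indeed empty). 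What $\@A$ naturally is is a \emph{coideal}, i.e.\ (via \cref{thm:posite-ctb-spatial}) a closed subset of $X$ --- morally $\-{f^{-1}(y)}$ --- and to produce an actual point you must find $x$ in that closed set lying simultaneously in all of the countably many sets $f^{-1}(f(U))$ for $U \in \@A$, each of which is dense open in it. That requires a Baire-category argument (or an invocation of \cref{thm:posite-ctb-pit} after adding further covering relations that you have not specified), and it is exactly this argument that is missing from your sketch. Choosing ``some preimage'' $U_V \in \@U$ for each $V \in \@V$ does not help either: the assignment $V \mapsto U_V$ is not monotone and does not send filters to filters.

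For comparison, the paper's proof makes the coideal picture literal: it embeds $Y$ into the lower powerspace $\@F(X)$ via $y \mapsto \-{f^{-1}(y)}$ (continuous because $f$ is open, and an embedding with $\Dia U$ pulling back to $f(U)$), writes down an explicit $\*\Pi^0_2$ description of the image inside the quasi-Polish space $\@F(X) \cong \Coidl(\@U, \rhd)$, and proves the hard inclusion by precisely the Baire-category argument described above: given $F$ satisfying the conditions, one finds $x \in F \cap \bigcap\{f^{-1}(f(U)) \mid U \in \@U,\ F \cap U \ne \emptyset\}$ and checks $F = \-{f^{-1}(f(x))}$. If you want to complete your version, you should expect to reproduce that step; until you do, the proof has a genuine gap at its crux.
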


We will prove this using the lower powerspace $\@F(X)$.  Before doing so, we make some general remarks on the connection between lower powerspaces, open maps, and the more general class of \emph{essential maps}, defined below.

Let $X$ be a topological space.  A subset $A \subseteq X$ is \defn{saturated} if it is upward-closed in the specialization preorder; the \defn{saturation} ${\Uparrow}A$ of an arbitrary subset $A \subseteq X$ is its upward closure.  Every open set is saturated; and the saturation ${\Uparrow} A$ of $A$ is the intersection of all open sets containing $A$.  A continuous map $f : X -> Y$ is \defn{essential} if for every open $U \subseteq X$, ${\Uparrow} f(U) \subseteq Y$ is open.  In particular, every continuous open map is essential.

\begin{lemma}
\label{thm:essmap-lowpow}
A continuous map $f : X -> Y$ is essential iff
\begin{align*}
f^{-1} \circ \down : Y &--> \@F(X) \\
y &|--> f^{-1}(\-{\{y\}})
\end{align*}
is continuous.
\end{lemma}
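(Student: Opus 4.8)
The plan is to unwind both sides of the claimed equivalence into statements about the subbasic open sets $\Dia U$ of $\@F(X)$ and about the open sets of $Y$, and then check they match. Recall that a map into $\@F(X)$ is continuous iff the preimage of each subbasic $\Dia U$ (for $U \subseteq X$ open) is open; and for the map $g := f^{-1} \circ \down$ we compute, for $y \in Y$,
\begin{align*}
g(y) \in \Dia U &\iff f^{-1}(\-{\{y\}}) \cap U \ne \emptyset \\
&\iff \exists x \in U\, (f(x) \in \-{\{y\}}) \\
&\iff \exists x \in U\, (y \le f(x)) \\
&\iff y \in {\Uparrow} f(U),
\end{align*}
where the last step uses that ${\Uparrow}f(U) = \{y \mid \exists x \in U\,(y \le f(x))\}$ since the saturation is the upward closure in the specialization preorder. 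Thus $g^{-1}(\Dia U) = {\Uparrow} f(U)$.

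Given this identity, both directions are immediate. If $f$ is essential, then ${\Uparrow}f(U)$ is open for every open $U \subseteq X$, so $g^{-1}(\Dia U)$ is open for every subbasic $\Dia U$, hence $g$ is continuous. Conversely, if $g$ is continuous, then $g^{-1}(\Dia U) = {\Uparrow}f(U)$ is open for every open $U \subseteq X$, which is exactly the definition of $f$ being essential. (Note that continuity of $f$ itself is already part of the hypothesis ``$f$ is essential'' and is used only to make sense of the statement; it is not needed in the argument beyond that.)

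The only real content is the computation $g^{-1}(\Dia U) = {\Uparrow}f(U)$, and within that the one step worth pausing on is the passage from $f(x) \in \-{\{y\}}$ to $y \le f(x)$: this is just the definition of the specialization preorder ($y \in \-{\{f(x)\}} \iff y \le f(x)$, equivalently $f(x) \in \down y = \-{\{y\}}$), so there is no genuine obstacle. I would present the proof as essentially the displayed chain of equivalences above together with the two one-line implications, remarking that it suffices to check preimages of subbasic opens because the $\Dia U$ generate the topology on $\@F(X)$.
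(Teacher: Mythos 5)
Your proof takes essentially the same route as the paper: both reduce the lemma to the single identity $(f^{-1} \circ \down)^{-1}(\Dia U) = {\Uparrow} f(U)$, from which both directions are immediate. The paper derives this identity by passing to complements and using that $\neg \-{\{y\}}$ is saturated, while you compute pointwise via the specialization preorder; either is fine. However, your chain contains two direction errors that happen to cancel. First, since $\-{\{y\}} = \down y = \{z \mid z \le y\}$, the condition $f(x) \in \-{\{y\}}$ says $f(x) \le y$, not $y \le f(x)$ (your parenthetical ``equivalently $f(x) \in \down y$'' asserts the opposite inequality from $y \in \-{\{f(x)\}}$). Second, the saturation is the \emph{upward} closure, so ${\Uparrow} f(U) = \{y \mid \exists x \in U\, (f(x) \le y)\}$, whereas the set you wrote, $\{y \mid \exists x \in U\, (y \le f(x))\}$, is the downward closure. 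With both inequalities reversed the chain reads $g(y) \in \Dia U \iff \exists x \in U\, (f(x) \le y) \iff y \in {\Uparrow} f(U)$, which is correct and agrees with the paper; as written, though, the two middle assertions are individually false.
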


(Recall (\cref{sec:lowpow}) that $\down : Y -> \@F(Y)$ denotes the map $\down y := \-{\{y\}}$.)

\begin{proof}
For open $U \subseteq X$, we have
\begin{align*}
f^{-1}(\-{\{y\}}) \in \Dia U
&\iff U \not\subseteq f^{-1}(\neg \-{\{y\}}) \\
&\iff f(U) \not\subseteq \neg \-{\{y\}} \\
&\iff {\Uparrow} f(U) \not\subseteq \neg \-{\{y\}} \qquad\text{since $\neg \-{\{y\}}$ is saturated} \\
&\iff y \in {\Uparrow} f(U),
\end{align*}
i.e., $(f^{-1} \circ \down)^{-1}(\Dia U) = {\Uparrow} f(U)$.
\end{proof}

\begin{lemma}
\label{thm:openmap-lowpow}
Let $f : X -> Y$ be a continuous open map.  Then
\begin{align*}
f^{-1} \circ \down = \-{f^{-1}} : Y &--> \@F(X) \\
y &|--> \-{f^{-1}(y)},
\end{align*}
hence $\-{f^{-1}}$ is continuous.
\end{lemma}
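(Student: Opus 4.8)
The plan is to prove the pointwise set equality $f^{-1}(\-{\{y\}}) = \-{f^{-1}(y)}$ of closed subsets of $X$, for each $y \in Y$. Granting this, the continuity of $\-{f^{-1}} : Y \to \@F(X)$ follows at once from \cref{thm:essmap-lowpow}: every continuous open map is essential, so $f^{-1} \circ \down$ is continuous, and it equals $\-{f^{-1}}$ by the claimed equality.

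The inclusion $\-{f^{-1}(y)} \subseteq f^{-1}(\-{\{y\}})$ needs only continuity of $f$: the set $f^{-1}(\-{\{y\}})$ is closed and contains $f^{-1}(y)$, hence contains $\-{f^{-1}(y)}$. For the reverse inclusion $f^{-1}(\-{\{y\}}) \subseteq \-{f^{-1}(y)}$ I would use openness. Let $x$ satisfy $f(x) \in \-{\{y\}}$; by definition of the specialization order this says $f(x) \le y$, i.e., every open set containing $f(x)$ also contains $y$. To see $x \in \-{f^{-1}(y)}$, take any open $U \ni x$; then $f(U)$ is open (as $f$ is open) and contains $f(x)$, so $y \in f(U)$, i.e., $U \cap f^{-1}(y) \ne \emptyset$. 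As $U$ was arbitrary, $x \in \-{f^{-1}(y)}$.

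I do not expect any genuine obstacle; the only step requiring care is unwinding $f(x) \in \-{\{y\}}$ into the statement that $y$ lies in every open neighborhood of $f(x)$, which is exactly the form that pairs with the openness of $f$. (Alternatively, one can read the result off the computation $(f^{-1} \circ \down)^{-1}(\Dia U) = {\Uparrow} f(U)$ from the proof of \cref{thm:essmap-lowpow}, using that ${\Uparrow} f(U) = f(U)$ is already open when $f$ is open.)
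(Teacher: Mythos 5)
Your proposal is correct and proves the same set equality $f^{-1}(\overline{\{y\}}) = \overline{f^{-1}(y)}$ that the paper does, with the easy inclusion handled identically via continuity and the hard inclusion using openness of $f$ in the same essential way (an open set $f(U)$ is saturated, so containing $f(x) \le y$ forces it to contain $y$); the only difference is that you chase elements where the paper runs the equivalent chain of inclusions through the image--preimage adjunction. Your reduction of continuity to \cref{thm:essmap-lowpow} via the remark that open maps are essential is exactly the paper's intent as well.
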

\begin{proof}
Let $y \in Y$.  Clearly $f^{-1}(y) \subseteq f^{-1}(\-{\{y\}})$, and the latter is closed since $f$ is continuous, whence $\-{f^{-1}(y)} \subseteq f^{-1}(\-{\{y\}})$.  Conversely, we have
\begin{align*}
f^{-1}(\-{\{y\}}) \subseteq \-{f^{-1}(y)}
&\iff \neg \-{f^{-1}(y)} \subseteq f^{-1}(\neg \-{\{y\}}) \\
&\iff f(\neg \-{f^{-1}(y)}) \subseteq \neg \-{\{y\}} \\
&\iff f(\neg \-{f^{-1}(y)}) \subseteq \neg \{y\} \qquad\text{since $f(\neg \-{f^{-1}(y)})$ is open} \\
&\iff \neg \-{f^{-1}(y)} \subseteq f^{-1}(\neg \{y\}) \\
&\iff f^{-1}(y) \subseteq \-{f^{-1}(y)}
\end{align*}
which is clearly true.
\end{proof}

\begin{proof}[Proof of \cref{thm:qpol-opensurj}]
Consider the map $\-{f^{-1}} : Y -> \@F(X)$.  By the above lemmas, for open $U \subseteq X$ we have
\begin{align*}
\-{f^{-1}}^{-1}(\Dia U) = f(U);  \tag{$*$}
\end{align*}
thus since $f$ is open surjective and $Y$ is $T_0$, $\-{f^{-1}}$ is an embedding.  Let $\@U$ be a countable basis of open sets in $X$.  We claim
\begin{align*}
\-{f^{-1}}(Y) = \left\{F \in \@F(X) \relmiddle| \begin{aligned}
&F \in \Dia X \AND \\
&\forall U, V \in \@U\, (F \in \Dia U \cap \Dia V \implies F \in \Dia(f^{-1}(f(U)) \cap V)) \AND \\
&\forall U \in \@U\, (F \in \Dia f^{-1}(f(U)) \implies F \in \Dia U)
\end{aligned}\right\}.
\end{align*}
$\subseteq$ is straightforward.  To prove $\supseteq$, let $F$ belong to the right-hand side.  By the first condition on $F$, $F \ne \emptyset$.  By the second condition on $F$, for every $U \in \@U$ with $F \in \Dia U$, $f^{-1}(f(U))$ is dense in $F$.  Thus by Baire category, there is some
\begin{align*}
x \in F \cap \bigcap \{f^{-1}(f(U)) \mid U \in \@U \AND F \in \Dia U\}.
\end{align*}
We claim that $F = \-{f^{-1}(f(x))}$.  Indeed, for $U \in \@U$, we have
\begin{align*}
\-{f^{-1}(f(x))} \in \Dia U
&\iff f(x) \in f(U) \qquad\text{by ($*$)} \\
&\iff x \in f^{-1}(f(U)).
\end{align*}
If $x \in f^{-1}(f(U))$, then $x \in F \cap f^{-1}(f(U)) \ne \emptyset$, i.e., $F \in \Dia f^{-1}(f(U))$, whence $F \in \Dia U$ by the third condition on $F$.  Conversely, if $F \in \Dia U$, then $x \in f^{-1}(f(U))$ by definition of $x$.  Thus $F \in \Dia U \iff \-{f^{-1}(f(x))} \in \Dia U$ for every $U \in \@U$.
\end{proof}

We also have the following ``converse'' of \cref{thm:qpol-opensurj}, which generalizes the fact \cite[7.14]{Kec} that every nonempty Polish space is a continuous open image of Baire space $\#N^\#N$:

\begin{theorem}[{\cite[Lemma~38]{deB}}]
\label{thm:qpol-opensurj-baire}
Every nonempty quasi-Polish space is a continuous open image of $\#N^\#N$.
\end{theorem}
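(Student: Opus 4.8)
The plan is to mimic the classical proof that a nonempty Polish space is a continuous open image of $\#N^\#N$ via a Lusin scheme of nonempty open sets of vanishing diameter (\cite[7.14]{Kec}), with the diameter condition replaced by the combinatorics of a posite. By \cref{thm:qpol-posite-ctb} --- after first enlarging a countable basis of $X$ so that it contains $X$ itself --- fix a countable basic posite $(\@U, |>)$ for $X$ with $X \in \@U$, so that $e_\@U \colon X \cong \PFilt(\@U, |>) \subseteq \#S^\@U$. Enumerate the covering relations of $|>$ as $(\@V_k |> W_k)_k$ with each relation occurring infinitely often, and fix $\kappa \colon \#N \to \#N$ with every fibre infinite. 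I will construct nonempty $U_s \in \@U$, indexed by finite sequences $s$ of naturals, with $U_{\langle\rangle} = X$, defining the successors of a node $s$ of length $n$ thus: if $U_s \subseteq W_{\kappa(n)}$, pick (using \eqref{eq:posite-stable}) some $\@V'_s |> U_s$ refining $\@V_{\kappa(n)}$ and let $(U_{s \frown m})_m$ enumerate the nonempty $V \in \@U$ lying inside some member of $\@V'_s$; otherwise let $(U_{s \frown m})_m$ enumerate the nonempty $V \in \@U$ with $V \subseteq U_s$. By \eqref{eq:posite-pcpl} and \eqref{eq:posite-subcan} (valid in a basic posite), together with $\@U$ being a basis, in both cases the successors are nonempty members of $\@U$ that are contained in $U_s$ and cover $U_s$; so every $U_s$ is nonempty, and for each $\alpha \in \#N^\#N$ the upward-closed set $\@A_\alpha := \{V \in \@U \mid \exists n\; U_{\alpha \restriction n} \subseteq V\}$ is a filter (downward-directed because the $U_{\alpha \restriction n}$ decrease, nonempty because $X = U_{\langle\rangle} \in \@A_\alpha$).

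Next I would check that $\@A_\alpha$ is also a $|>$-coideal: given $\@V_k |> W_k$ with $U_{\alpha \restriction m} \subseteq W_k$, choose $n > m$ with $\kappa(n) = k$; then $U_{\alpha \restriction n} \subseteq W_k$, so the successors of $\alpha \restriction n$ were taken from some $\@V'_{\alpha \restriction n} |> U_{\alpha \restriction n}$ refining $\@V_k$, whence $U_{\alpha \restriction (n+1)}$ sits below a member of $\@V_k$, which therefore lies in $\@A_\alpha$. Hence $\@A_\alpha \in \PFilt(\@U, |>) = e_\@U(X)$, and I define $f(\alpha)$ to be the unique point of $X$ with $e_\@U(f(\alpha)) = \@A_\alpha$. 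Continuity is clear, since for basic $V \in \@U$ we have $f^{-1}(V) = \{\alpha \mid V \in \@A_\alpha\} = \bigcup\{[s] \mid U_s \subseteq V\}$, where $[s] \subseteq \#N^\#N$ is the basic open set determined by $s$. For openness and surjectivity simultaneously it suffices to show $f([s]) = U_s$ for all $s$; ``$\subseteq$'' is immediate, and for ``$\supseteq$'' I take $x \in U_s$ and build a branch $\alpha \supseteq s$ with $f(\alpha) = x$: starting from $\alpha \restriction |s| = s$ and keeping $x \in U_{\alpha \restriction n}$, I additionally arrange, cycling through $\@U$ so that each of its members recurs infinitely often, that $U_{\alpha \restriction (n+1)} \subseteq V$ whenever $V$ is the member of $\@U$ scheduled at stage $n$ and $x \in V$ --- which is possible because in both clauses of the construction the successors of $\alpha \restriction n$ contain a basic neighbourhood of $x$ as small as we like (here using $x \in U_{\alpha \restriction n} = \bigcup \@V'_{\alpha \restriction n}$ in the first clause). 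Then $\@A_\alpha = \{V \in \@U \mid x \in V\} = e_\@U(x)$, so $f(\alpha) = x$.

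The step I expect to be the main obstacle is exactly this last one, ensuring surjectivity and openness. A priori a branch obtained merely by ``following $x$'' (keeping $x \in U_{\alpha \restriction n}$) need only converge to some point lying below $x$ in the specialization order, not to $x$ itself --- the phenomenon the classical vanishing-diameter hypothesis exists to exclude. Handling it is precisely why one must in addition force the successors to shrink down onto every basic neighbourhood of $x$, and the delicate point is to verify that such small successors remain available at the nodes where the construction is constrained to refine a prescribed posite cover $\@V_{\kappa(n)} |> W_{\kappa(n)}$.
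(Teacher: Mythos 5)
Your proof is correct, but it takes a genuinely different route from the one in these notes. The paper's proof is a two-step reduction: the coordinatewise map $g : \#N^\#N \cong (\#N^\#N)^\#N \to \#S^\#N$ (sending each coordinate sequence to $0$ iff it is identically zero) is a continuous open surjection, so its restriction over a $\*\Pi^0_2$ set $X \subseteq \#S^\#N$ is a continuous open surjection from the Polish space $g^{-1}(X)$ onto $X$, and one then composes with the classical continuous open surjection $\#N^\#N \to g^{-1}(X)$ of \cite[7.14]{Kec}. You instead build the surjection directly by a Lusin-type scheme governed by a countable basic posite from \cref{thm:qpol-posite-ctb}, with the vanishing-diameter requirement replaced by the bookkeeping conditions ``eventually refine every scheduled cover $\@V_k |> W_k$'' (which makes each branch filter a $|>$-coideal, hence a point via $e_\@U(X) = \PFilt(\@U, |>)$) and, in the surjectivity argument, ``eventually enter every scheduled basic neighbourhood of $x$'' (which pins the branch to $x$ itself rather than to something below it in specialization order --- you correctly identify this as the crux, and your use of \eqref{eq:posite-subcan} to keep small successors available at the constrained nodes is exactly what is needed). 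The paper's argument is shorter and outsources all the work to the classical Polish case; yours is self-contained modulo \cref{sec:posite}, reproves \cite[7.14]{Kec} as a special case rather than citing it, and yields the sharper conclusion that the basic open sets $[s]$ map exactly onto the members of a prescribed countable basis $\@U$. Two cosmetic points: your enumerations of successors should be understood as allowing repetitions (the relevant collections may be finite), and in the coideal verification you want $n \ge m$ with $\kappa(n) = k$, which your assumption that every fibre of $\kappa$ is infinite supplies.
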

\begin{proof}
It is easy to see that $f : \#N^\#N -> \#S$ given by $f(x) = 0 \iff x = (0, 0, \dotsc)$ is a continuous open surjection.  It follows that $g := f^\#N : \#N^\#N \cong (\#N^\#N)^\#N -> \#S^\#N$ is a continuous open surjection.  Now for a $\*\Pi^0_2$ subset $X \subseteq \#S^\#N$, $h := g|g^{-1}(X) : g^{-1}(X) -> X$ is a continuous open surjection (with $h(g^{-1}(X) \cap U) = X \cap h(U)$ for open $U \subseteq \#N^\#N$).  If $X \ne \emptyset$, then $g^{-1}(X) \ne \emptyset$, whence by \cite[7.14]{Kec}, there is a continuous open surjection $k : \#N^\#N -> g^{-1}(X)$, whence $h \circ k : \#N^\#N -> X$ is a continuous open surjection.
\end{proof}

\begin{corollary}[{\cite[Corollary~42]{deB}}]
A nonempty space $X$ is quasi-Polish iff it is a continuous open $T_0$ image of $\#N^\#N$.  \qed
\end{corollary}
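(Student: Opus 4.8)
The plan is to deduce the corollary by combining \cref{thm:qpol-opensurj} and \cref{thm:qpol-opensurj-baire}, using the fact that Baire space $\#N^\#N$ is itself quasi-Polish. First, for the ``only if'' direction: if $X$ is a nonempty quasi-Polish space, then $X$ is $T_0$ (being, by definition, homeomorphic to a subspace of $\#S^\#N$), and \cref{thm:qpol-opensurj-baire} produces a continuous open surjection $\#N^\#N \to X$, which exhibits $X$ as a continuous open $T_0$ image of $\#N^\#N$.

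For the ``if'' direction, I would start from a continuous open surjection $f : \#N^\#N \to X$ onto a $T_0$ space $X$. Since $\#N^\#N$ is Polish, it is quasi-Polish by \cref{thm:pol-qpol}; hence \cref{thm:qpol-opensurj}, applied with domain $\#N^\#N$, immediately gives that $X$ is quasi-Polish. Note that the nonemptiness hypothesis in the statement plays no role in this direction --- it is automatic, since $X$ is the surjective image of the nonempty space $\#N^\#N$ --- and is only needed to invoke \cref{thm:qpol-opensurj-baire} in the other direction.

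I do not anticipate any real obstacle here: the corollary is a purely formal consequence of the two cited theorems. The only minor points worth spelling out are that $\#N^\#N$ is quasi-Polish (immediate from \cref{thm:pol-qpol}) and that quasi-Polish spaces are automatically $T_0$ (immediate from the definition as a subspace of $\#S^\#N$).
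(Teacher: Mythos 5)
Your proof is correct and is exactly the intended argument: the paper states this corollary with a \qed because it follows immediately by combining \cref{thm:qpol-opensurj-baire} (forward direction) with \cref{thm:qpol-opensurj} applied to the quasi-Polish space $\#N^\#N$ (reverse direction). Your remarks on why $\#N^\#N$ is quasi-Polish and why nonemptiness is automatic in the reverse direction are accurate.
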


\section{The convergent strong Choquet game}
\label{sec:game}

We conclude with a game characterization of quasi-Polish spaces, analogous to that of Polish spaces via the strong Choquet game \cite[8.18]{Kec}.  This characterization is from \cite[Section~10]{deB}.

Let $X$ be a topological space.  The \defn{convergent strong Choquet game}\footnote{The name is derived from \cite{DM}, where the game is studied for $T_1$ spaces $X$.} $\@G(X)$ on $X$ is played in exactly the same way as the strong Choquet game \cite[8.14]{Kec}, but with different winning conditions.  That is, players I and II alternate turns, with I moving first:
\begin{align*}
\renewcommand\arraystretch{1.5}
\begin{array}{c|ccccc}
\text{I} & (U_0,x_0) && (U_1,x_1) && \dotsb \\
\text{II} & & V_0 && V_1
\end{array}
\end{align*}
On turn $n$, I must play an open set $U_n \subseteq X$ and a point $x_n \in U_n$, with $U_n \subseteq V_{n-1}$ if $n \ge 1$; and II must respond with an open set $V_n \subseteq X$ such that $x_n \in V_n \subseteq U_n$.  Player II wins iff $X = \emptyset$ (so I is unable to play the first move) or the open sets $U_0 \supseteq V_0 \supseteq U_1 \supseteq V_1 \supseteq \dotsb$ form a neighborhood basis of some $x \in X$.  Note that the latter condition is equivalent to:
\begin{enumerate}
\item[(i)] $x \in \bigcap_n U_n = \bigcap_n V_n$, and
\item[(ii)] the filter base $\{U_n\}_n$ (equivalently, $\{V_n\}_n$) converges to $x$.
\end{enumerate}
If II has a winning strategy, then we call $X$ a \defn{convergent strong Choquet space}.

In the strong Choquet game, the winning condition for player II is that $\bigcap_n U_n = \bigcap_n V_n \ne \emptyset$, which is weaker than that in the convergent strong Choquet game; thus

\begin{proposition}
\label{thm:cscs-scs}
Every convergent strong Choquet space is a strong Choquet space.  \qed
\end{proposition}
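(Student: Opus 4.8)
The plan is to note that $\@G(X)$ and the strong Choquet game are, by definition, played on the same game tree: the legal moves of each player, the order of play, and the constraints $U_n \subseteq V_{n-1}$, $x_n \in U_n \cap V_n$, $V_n \subseteq U_n$ are all identical, and the two games differ \emph{only} in which runs are declared wins for player~II. Hence a strategy for~II is literally the same object in both games, and it suffices to check that every run won by~II in $\@G(X)$ is also won by~II in the strong Choquet game; a winning strategy for~II in $\@G(X)$ is then automatically a winning strategy for~II in the strong Choquet game.

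So consider a run of the game won by~II in $\@G(X)$. If $X = \emptyset$, then I has no legal first move, so~II wins in both games by the usual convention. Otherwise the winning condition for $\@G(X)$ says that $U_0 \supseteq V_0 \supseteq U_1 \supseteq V_1 \supseteq \dotsb$ is a neighborhood basis of some $x \in X$; in particular $x \in \bigcap_n U_n$. Since $U_{n+1} \subseteq V_n \subseteq U_n$ for every~$n$, we have $\bigcap_n U_n = \bigcap_n V_n$, and this common intersection contains~$x$, hence is nonempty. That is exactly the winning condition for~II in the strong Choquet game, so the run is won by~II there as well.

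The argument has no real obstacle: its entire content is the tautology that meeting a more demanding winning condition entails meeting a less demanding one, together with the trivial observations that $\bigcap_n U_n$ and $\bigcap_n V_n$ coincide and that the empty-space case is resolved identically in both games. This is why the proposition is recorded with only a brief justification.
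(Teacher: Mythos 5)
Your proposal is correct and takes exactly the route of the paper, which dispenses with the proposition by the remark preceding it: the two games have identical rules and differ only in the winning condition, and the condition that $U_0 \supseteq V_0 \supseteq U_1 \supseteq \dotsb$ be a neighborhood basis of some point implies $\bigcap_n U_n = \bigcap_n V_n \ne \emptyset$. Your version merely spells out the tautology in more detail.
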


The usual proof that completely metrizable spaces are strong Choquet in fact shows that

\begin{proposition}
\label{thm:cmet-cscs}
Completely metrizable spaces are convergent strong Choquet.  \qed
\end{proposition}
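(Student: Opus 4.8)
The plan is to run the standard strategy argument. Fix a compatible complete metric $d$ on $X$. We describe a strategy for player II in $\@G(X)$ and check that it wins. On turn $n$, when I has played $(U_n, x_n)$ with $x_n \in U_n \subseteq V_{n-1}$ (where $V_{-1} := X$), player II chooses a radius $r_n > 0$ small enough that the open ball $B(x_n, r_n)$ satisfies $\-{B(x_n, r_n)} \subseteq U_n$ (possible since $U_n$ is open), and also $r_n \le 2^{-n}$; then II plays $V_n := B(x_n, r_n)$. This is legal since $x_n \in V_n \subseteq U_n$.

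Now suppose a run of the game follows this strategy, producing $U_0 \supseteq V_0 \supseteq U_1 \supseteq \dotsb$. The centers $x_n$ satisfy $x_{n+1} \in V_n = B(x_n, r_n)$, so $d(x_m, x_n) < r_n \le 2^{-n}$ for $m > n$; hence $(x_n)_n$ is Cauchy, and by completeness converges to some $x \in X$. For each $n$, all later $x_m$ ($m \ge n+1$) lie in $V_n$, so $x \in \-{V_n} \subseteq U_{n+1} \subseteq V_n$; thus $x \in \bigcap_n V_n = \bigcap_n U_n$, giving condition (i). For condition (ii), given any open $W \ni x$, pick $\varepsilon > 0$ with $B(x, \varepsilon) \subseteq W$; choose $n$ with $2^{-n+1} < \varepsilon$ and $d(x_n, x) < 2^{-n}$ (possible since $x_n \to x$ and $r_n \le 2^{-n}$ is summable-tail-small); then $V_n \subseteq B(x_n, r_n) \subseteq B(x, 2^{-n} + r_n) \subseteq B(x, 2^{-n+1}) \subseteq W$. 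Hence $\{V_n\}_n$ is a neighborhood basis of $x$, so II wins.

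The only mildly delicate point is matching the rates so that the single limit $x$ of the Cauchy sequence is simultaneously the point witnessing (i) and the point whose neighborhood basis is witnessed by (ii); but since player II controls the radii and can always shrink them, this is routine bookkeeping rather than a genuine obstacle. (Indeed, this is exactly the classical proof that completely metrizable spaces are strong Choquet, \cite[8.17]{Kec}, with the extra observation — automatic from the shrinking radii — that the $V_n$ form a neighborhood basis and not merely a decreasing sequence with nonempty intersection.)
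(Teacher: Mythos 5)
Correct, and this is precisely the ``usual proof'' that the paper invokes without writing out: II plays balls $B(x_n,r_n)$ with $\overline{B(x_n,r_n)}\subseteq U_n$ and $r_n\le 2^{-n}$, the centers form a Cauchy sequence whose limit $x$ lies in every $V_n$, and the shrinking radii force the $V_n$ to form a neighborhood basis of $x$. The only blemish is the chain $x\in\overline{V_n}\subseteq U_{n+1}\subseteq V_n$, whose first inclusion is backwards (in fact $U_{n+1}\subseteq V_n\subseteq\overline{V_n}$); what you want is $x\in\overline{V_{n+1}}\subseteq U_{n+1}\subseteq V_n$, which holds because $x_m\in V_{n+1}$ for all $m\ge n+2$ and II chose $r_{n+1}$ so that $\overline{B(x_{n+1},r_{n+1})}\subseteq U_{n+1}$.
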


\begin{remark}
It is easily seen that in a convergent strong Choquet space, the set of points with a countable neighborhood basis must be dense.  Thus, for example, $2^I$ for uncountable $I$ is \emph{not} convergent strong Choquet.  In particular, unlike with strong Choquet spaces, \emph{not every compact Hausdorff space} is convergent strong Choquet; and convergent strong Choquet spaces are \emph{not closed under uncountable products}.
\end{remark}

Convergent strong Choquet spaces share many of the closure properties of quasi-Polish spaces:

\begin{proposition}
\label{thm:cscs-pi02}
A $\*\Pi^0_2$ subspace of a convergent strong Choquet space is convergent strong Choquet.
\end{proposition}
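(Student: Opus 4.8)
The plan is to prove this by strategy transfer in the game $\@G$. Suppose $\sigma$ is a winning strategy for player~II in $\@G(X)$; I will build one, $\tau$, in $\@G(Y)$. Write $Y = \bigcap_n (\neg U_n \cup V_n)$ with $U_n, V_n \subseteq X$ open, and fix an enumeration $\pi : \#N \to \#N$ in which every value occurs infinitely often. While playing $\@G(Y)$, player~II maintains in parallel a simulated play of $\@G(X)$, feeding it through $\sigma$; at stage $m$ she uses her freedom in choosing the simulated position in order to ``address'' the requirement that $x \in U_{\pi(m)}$ implies $x \in V_{\pi(m)}$.

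Concretely: suppose in $\@G(Y)$ player~I has just played $(U_m', x_m')$, with $U_m' \subseteq Y$ relatively open and $U_m' \subseteq V_{m-1}'$ if $m \ge 1$, where $V_{m-1}'$ is II's previous move; having arranged $V_{m-1}' = \tilde V_{m-1} \cap Y$ for the corresponding move $\tilde V_{m-1}$ in the simulated $\@G(X)$, choose an open $A_m \subseteq X$ with $A_m \cap Y = U_m'$ and (for $m \ge 1$) $A_m \subseteq \tilde V_{m-1}$. Put $n := \pi(m)$. Since $x_m' \in Y$, either $x_m' \in V_n$ or $x_m' \notin U_n$; accordingly set
\begin{align*}
\tilde U_m := \begin{cases} A_m \cap V_n & \text{if } x_m' \in V_n, \\ A_m & \text{if } x_m' \notin U_n, \end{cases}
\end{align*}
and $\tilde x_m := x_m'$. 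These form a legal move for I in $\@G(X)$ (one has $\tilde x_m \in \tilde U_m$ and $\tilde U_m \subseteq A_m \subseteq \tilde V_{m-1}$); let $\tilde V_m := \sigma(\tilde U_0, \tilde x_0, \dots, \tilde U_m, \tilde x_m)$, and have II respond in $\@G(Y)$ with $V_m' := \tilde V_m \cap Y$, which satisfies $x_m' \in V_m' \subseteq U_m'$.

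Then I would check that $\tau$ wins. The simulated play of $\@G(X)$ follows $\sigma$, so it is won by~II: $\{\tilde U_m\}_m$ is a neighborhood basis of some $x \in \bigcap_m \tilde U_m$. The crucial point is $x \in Y$: if $x \in U_n$, then since $U_n$ is open and $\{\tilde U_m\}_m$ is a neighborhood basis of $x$, some $\tilde U_k \subseteq U_n$, so $\tilde x_m \in \tilde U_m \subseteq \tilde U_k \subseteq U_n$ for all $m \ge k$; picking $m \ge k$ with $\pi(m) = n$, we were in the first case, so $\tilde U_m \subseteq V_n$ and hence $x \in V_n$. Thus $x \in \bigcap_n(\neg U_n \cup V_n) = Y$. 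Finally, $\{V_m'\}_m$ is a neighborhood basis of $x$ in $Y$: we have $x \in Y \cap \bigcap_m \tilde V_m = \bigcap_m V_m'$, and for any relatively open $B \cap Y \ni x$ (with $B \subseteq X$ open) some $\tilde V_m \subseteq B$, so $V_m' = \tilde V_m \cap Y \subseteq B \cap Y$; hence II wins $\@G(Y)$.

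The main obstacle is exactly what the scheduling device handles: among the two halves $\neg U_n$ and $V_n$ of the requirement ``$U_n \implies V_n$'', only $V_n$ is something player~II can force the limit point into by shrinking her open sets, so a single diagonalization step per requirement does not suffice. Instead she must return to each requirement at arbitrarily late stages and invoke the convergence clause of the game — that $x$ has $\{\tilde U_m\}_m$ as a neighborhood basis — to ensure that if $x$ ever lies in $U_n$, then player~I's marked points eventually witness it, triggering the good case. This is precisely the place where the ordinary strong Choquet game would not be enough.
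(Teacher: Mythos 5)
Your proof is correct and is essentially the paper's: both run a side copy of $\@G(X)$, have II translate moves back and forth by intersecting with $Y$, and shrink player I's simulated open set into $V_n$ whenever the marked point permits, using the convergence (neighborhood-basis) clause to verify that the limit lies in $Y$. The only differences are bookkeeping --- the paper handles all requirements $n \le k$ at turn $k$ rather than scheduling one per turn via $\pi$, and runs the final dichotomy on whether some late marked point lies in $U_n$ (using closedness of $\neg U_n$) rather than on whether $x \in U_n$.
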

\begin{proof}
Let $X$ be convergent strong Choquet and $Y = \bigcap_{n \in \#N} (\neg A_n \cup B_n) \in \*\Pi^0_2(X)$, where $A_n, B_n \subseteq X$ are open.  Let II play according to the following strategy in $\@G(Y)$.
\begin{align*}
\@G(Y) &
\renewcommand\arraystretch{1.5}
\begin{array}{c|ccccc}
\text{I} & (U_0,x_0) && (U_1,x_1) && \dotsb \\
\text{II} & & V_0 && V_1
\end{array}
\\[2em]
\@G(X) &
\renewcommand\arraystretch{1.5}
\begin{array}{c|ccccc}
\text{I} & (U'_0,x_0) && (U'_1,x_1) && \dotsb \\
\text{II} & & V'_0 && V'_1
\end{array}
\end{align*}
II keeps a side copy of the game $\@G(X)$ running.  Each move $V_k$ of II in $\@G(Y)$ will be determined by the corresponding move $V'_k$ of II in $\@G(X)$ via
\begin{align*}
V_k := V'_k \cap Y.  \tag{$*$}
\end{align*}
On turn $k$, I plays $(U_k, x_k)$ in $\@G(Y)$.  Let $U'_k \subseteq X$ be open so that
\begin{enumerate}
\item[(i)]  $x_k \in U'_k \cap Y \subseteq U_k$;
\item[(ii)] $U'_k \subseteq V'_{k-1}$ if $k \ge 1$ (possible since $x_k \in U_k \subseteq V_{k-1} \subseteq V'_{k-1}$ by ($*$));
\item[(iii)]  for all $n \le k$ such that $x_k \in A_n$, we have $U'_k \subseteq B_n$ (possible since $x_k \in Y \subseteq \neg A_n \cup B_n$).
\end{enumerate}
Let I play $(U'_k, x_k)$ in $\@G(X)$; this is legal by (i) and (ii).  Let $V'_k$ be given by II's winning strategy in $\@G(X)$, so that $x_k \in V'_k \subseteq U'_k$.  II then plays $V_k := V'_k \cap Y$ (as per ($*$)) in $\@G(Y)$; this is legal since clearly $x_k \in V_k$, and $V_k = V'_k \cap Y \subseteq U'_k \cap Y \subseteq U_k$ by (i).

To check that II wins $\@G(Y)$: since II wins $\@G(X)$, there is some $x \in X$ such that $\{V'_k\}_k$ forms a neighborhood basis for $x$; by ($*$), it is enough to check that $x \in Y = \bigcap_n (\neg A_n \cup B_n)$.  Fix $n \in \#N$; we check that $x \in \neg A_n \cup B_n$.  If $x_k \in A_n$ for some $k \ge n$, then by (iii), we have $U'_k \subseteq B_n$, whence $x \in V'_k \subseteq U'_k \subseteq B_n$.  Otherwise, for all $k \ge n$ we have $x_k \in \neg A_n$; we have $\lim_{k -> \infty} x_k = x$ (since for every basic neighborhood $V'_k \ni x$ we have $x_l \in V'_l \subseteq V'_k$ for all $l \ge k$), so since $\neg A_n$ is closed, $x \in \neg A_n$.
\end{proof}

\begin{proposition}
\label{thm:cscs-prod}
A countable product of convergent strong Choquet spaces is convergent strong Choquet.
\end{proposition}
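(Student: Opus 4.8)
The plan is to imitate the proof of \cref{thm:cscs-pi02}: player II runs a side copy of $\@G(X_i)$ for each coordinate $i$, but since there are countably many such games and only one move is made per turn in the product game, II must interleave them. Write the countable product as $X = \prod_{i \in \#N} X_i$ with each $X_i$ convergent strong Choquet, and fix a winning strategy $\sigma_i$ for II in $\@G(X_i)$. Recall that a basic open set in $X$ depends on only finitely many coordinates. The key bookkeeping device: on turn $n$ of $\@G(X)$, after I plays $(U_n, x_n)$ with $U_n$ a basic open set depending on coordinates in some finite set $F_n$, II will (i) refine $U_n$ to a basic open box $\prod_i W_{n,i}$ with $x_n \in \prod_i W_{n,i} \subseteq U_n$ and $W_{n,i} = X_i$ for all but finitely many $i$; (ii) for each coordinate $i$ that has been ``activated'' so far, feed the appropriate move into $\@G(X_i)$ and collect $\sigma_i$'s response $V'_{n,i} \subseteq W_{n,i}$; (iii) set $V_n := \prod_i V'_{n,i}$ (with $V'_{n,i} = X_i$ for coordinates not yet activated). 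The activation schedule should guarantee that every coordinate $i$ is eventually activated and that, once activated, coordinate $i$ receives a move on cofinally many turns, so that the plays in $\@G(X_i)$ are genuine infinite runs of the game.

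The main steps, in order. First, set up the coordinatewise side games: maintain for each $i$ a (possibly finite, growing) play of $\@G(X_i)$, and a function $n \mapsto i(n)$ that says which coordinate is ``advanced'' on turn $n$, chosen so that $i^{-1}(i)$ is infinite for every $i$ (e.g., a standard pairing enumeration) — but also advance any coordinate that I ``touches'' via the support of $U_n$, to keep the coordinatewise plays legal. Second, on each turn produce $U'_{n,i}$ from $U_n$: project/refine so that in the advanced coordinate(s) the new open set sits inside the previous $\@G(X_i)$-move $V'_{n-1,i}$ and I's point $x_n(i)$ lies in it; this is where one uses that $U_n \subseteq V_{n-1}$ coordinatewise. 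Third, let $V'_{n,i} := \sigma_i(\dots)$ be II's response in $\@G(X_i)$, and define II's actual move $V_n := \prod_i V'_{n,i}$; verify $x_n \in V_n \subseteq U_n$, so the move is legal. Fourth, the winning check: since $\sigma_i$ is winning, the coordinatewise play in $\@G(X_i)$ produces a point $y_i \in X_i$ for which the $V'_{\cdot,i}$ (restricted to turns advancing $i$) form a neighborhood basis; set $y := (y_i)_i$ and check that $\{V_n\}_n$ is a neighborhood basis of $y$ in $X$ — containment $y \in \bigcap_n V_n$ is immediate, and convergence follows because any basic neighborhood of $y$ constrains only finitely many coordinates, each of which is eventually pinned down by its side game.

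The hard part is the interleaving bookkeeping: ensuring that the coordinatewise runs of $\@G(X_i)$ are legal (each new open move nested inside the previous response, containing the relevant point) even though I in $\@G(X)$ may, on a given turn, shrink coordinates that II is not currently ``advancing.'' The cleanest fix is to advance, on turn $n$, \emph{every} coordinate in the (finite) support of $U_n$ together with the scheduled coordinate $i(n)$, and to treat coordinates never in any support as simply never activated (there II plays $X_i$ forever, which is fine since then I never shrinks them and $y_i$ can be taken to be $x_0(i)$, say, with $\{X_i\}$ a neighborhood basis of it only if... ) — actually the subtlety is that an un-activated coordinate need not have a convergent neighborhood basis, so one must instead ensure via the schedule $i(n)$ that \emph{all} coordinates get activated regardless of I's play; combining the scheduled activation with support-driven activation handles both legality and eventual coverage. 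Making this precise — and checking that the resulting play in each $\@G(X_i)$ really is an infinite legal run — is the one place requiring care; everything else is a routine unravelling of the definition of the product topology.
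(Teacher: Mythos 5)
Your approach is essentially the paper's: run a side copy of $\@G(X_i)$ in each coordinate, refine I's move to a basic open box around $x^k$, feed the components into the side games, and answer with the product of the side-game responses. The bookkeeping difficulty you flag at the end is best resolved by \emph{simplifying} the schedule rather than elaborating it: on turn $k$, II chooses $m_k > m_{k-1}$ large enough that $U^k$ contains a box neighborhood $U^k_0 \times \dotsb \times U^k_{m_k-1} \times \prod_{n \ge m_k} X_n$ of $x^k$, and then advances \emph{every} active side game $\@G(X_0), \dotsc, \@G(X_{m_k-1})$ on \emph{every} turn (the newly activated ones start fresh, with no nesting constraint to meet). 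With this there is no separate schedule $i(n)$ and no coordinate is ever ``shrunk while idle'': legality of the side-game moves is automatic, because the box refinement of $U^k$ is a nonempty box contained in the box $V^{k-1}$, and a nonempty box contained in a box is contained coordinatewise, so $U^k_n \subseteq V^{k-1}_n$ for all $n < m_{k-1}$. Since the $m_k$ are strictly increasing, every coordinate is eventually activated and thereafter advanced on every turn, so each side game is a genuine infinite legal run; the winning check then goes through exactly as you describe, using that a basic neighborhood of $y = (y_n)_n$ constrains only finitely many coordinates.
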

\begin{proof}
The proof is similar to the usual proof that products of strong Choquet spaces are strong Choquet (see e.g., \cite[4.1.2(c)]{Gao}).
Let $X_n$ for $n \in \#N$ be convergent strong Choquet spaces and put $X := \prod_n X_n$.  Player II plays in $\@G(X)$ as follows, while keeping track of integers $0 < m_0 < m_1 < m_2 < \dotsb$ and running side games of $\@G(X_n)$ for each $n$, such that on move $k$ in $\@G(X)$, side games $\@G(X_0), \dotsc, \@G(X_{m_k-1})$ are being played.  On turn $k$, after I plays $(U^k, x^k)$ in $\@G(X)$, II finds $m_k > m_{k-1}$ (where $m_{-1} := 0$) such that $U^k$ contains a basic open neighborhood of $x^k$ which is trivial in all but the first $m_k$ coordinates, i.e.,
\begin{align*}
x^k \in U^k_0 \times \dotsb \times U^k_{m_k-1} \times \prod_{n \ge m_k} X_n \subseteq U^k
\end{align*}
for open sets $U^k_0 \subseteq X_0$, $\dotsc$, $U^k_{m_k-1} \subseteq X_{m_k-1}$.  Let $x^k = (x^k_0, x^k_1, \dotsc)$.  I then plays $(U^k_n, x^k_n)$ in $\@G(X_n)$ for each $n < m_k$.  Let $V^k_n$ be given by II's winning strategy in $\@G(X_n)$.  II then plays
\begin{align*}
V^k := V^k_0 \times \dotsb \times V^k_{m_k-1} \times \prod_{n \ge m_k} X_n
\end{align*}
in $\@G(X)$.  It is straightforward to check that this works.
\end{proof}

\begin{proposition}
\label{thm:cscs-opensurj}
A continuous open image of a convergent strong Choquet space is convergent strong Choquet.
\end{proposition}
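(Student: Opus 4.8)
The plan is to mimic the strategy-copying argument from the proof of \cref{thm:cscs-pi02}, using the map $f$ to shuttle moves back and forth. Let $f : X \to Y$ be a continuous open surjection with $X$ convergent strong Choquet. (If $X = \emptyset$ then $Y = f(X) = \emptyset$ and II wins $\@G(Y)$ vacuously, so assume $X \ne \emptyset$.) I would have II play $\@G(Y)$ while keeping a side copy of $\@G(X)$ running, and arrange that each move $V_n$ of II in $\@G(Y)$ is the image $f(V_n')$ of the corresponding move $V_n'$ that II's winning strategy produces in $\@G(X)$; since $f$ is open, each such $V_n = f(V_n')$ is open.

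The step with real content is the lift of I's moves. On turn $n$, after I plays $(U_n, x_n)$ in $\@G(Y)$ (so $x_n \in U_n$, and $U_n \subseteq V_{n-1} = f(V_{n-1}')$ if $n \ge 1$), II chooses a preimage $\tilde x_n \in f^{-1}(x_n)$, taking care to pick $\tilde x_n \in V_{n-1}'$ when $n \ge 1$; this is possible precisely because $x_n \in U_n \subseteq f(V_{n-1}')$, and for $n = 0$ any preimage works since $f$ is surjective. Then II sets $U_n' := f^{-1}(U_n)$ when $n = 0$ and $U_n' := f^{-1}(U_n) \cap V_{n-1}'$ when $n \ge 1$; in either case $U_n'$ is open, contains $\tilde x_n$, and (for $n \ge 1$) is contained in $V_{n-1}'$, so $(U_n', \tilde x_n)$ is a legal move for I in $\@G(X)$. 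Letting $V_n'$ be II's winning-strategy response (so $\tilde x_n \in V_n' \subseteq U_n'$), II plays $V_n := f(V_n')$ in $\@G(Y)$: this is legal since $x_n = f(\tilde x_n) \in f(V_n') = V_n$ and $V_n = f(V_n') \subseteq f(U_n') \subseteq f(f^{-1}(U_n)) \subseteq U_n$.

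To finish, I would check that II wins $\@G(Y)$. Since II wins $\@G(X)$, the sets $V_n'$ form a neighborhood basis of some $x \in X$; put $y := f(x)$. Each $V_n = f(V_n')$ is an open neighborhood of $y$, and since $x \in V_n' \subseteq U_n' \subseteq f^{-1}(U_n)$ we also get $y \in U_n$ for all $n$. Given any open $W \ni y$ in $Y$, continuity of $f$ makes $f^{-1}(W) \ni x$ open, so $V_n' \subseteq f^{-1}(W)$ for some $n$, whence $V_n = f(V_n') \subseteq W$. Thus the decreasing chain $U_0 \supseteq V_0 \supseteq U_1 \supseteq \dotsb$, all of whose members contain $y$, has $\{V_n\}_n$ as a cofinal neighborhood basis of $y$, hence is itself a neighborhood basis of $y$; so II wins $\@G(Y)$.

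I do not expect a genuine obstacle here: the whole argument is strategy bookkeeping, and the only place care is needed is ensuring $\tilde x_n$ can be chosen inside $V_{n-1}'$, which is exactly what having defined $V_{n-1} := f(V_{n-1}')$ buys us — openness of $f$ is used both to keep II's $\@G(Y)$-moves legal and to transport the neighborhood basis from $x$ to $y$ at the end.
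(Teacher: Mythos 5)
Your proof is correct and follows essentially the same route as the paper's: run a side copy of $\@G(X)$, lift I's moves through $f$ using openness (to find a preimage of $x_n$ inside $V_{n-1}'$), push II's responses forward via $V_n := f(V_n')$, and transport the neighborhood basis from $x$ to $f(x)$ at the end. The paper leaves the choice of $U_n'$ as ``any $x_k \in U_k' \subseteq f^{-1}(U_k)$ with $U_k' \subseteq V_{k-1}'$'' where you take the explicit $f^{-1}(U_n) \cap V_{n-1}'$, but this is only a cosmetic difference.
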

\begin{proof}
Again, the proof is similar to the usual proof for strong Choquet spaces (see \cite[4.1.2(b)]{Gao}).
Let $X$ be convergent strong Choquet and $f : X -> Y$ be a continuous open surjection.  II plays in $\@G(Y)$ as follows, while running a side game $\@G(X)$.  On turn $k$, I plays $(U_k, y_k)$ in $\@G(Y)$.  Let I play $(U'_k, x_k)$ in $\@G(X)$, for any $x_k \in U'_k \subseteq f^{-1}(U_k)$ with $f(x_k) = y_k$ and $U'_k \subseteq V'_{k-1}$ if $k \ge 1$; the latter is possible since $y_k \in U_k \subseteq V_{k-1} = f(V'_{k-1})$ (see definition of $V_k$ below).  Let $V'_k$ be given by II's winning strategy in $\@G(X)$, so $x_k \in V'_k \subseteq U'_k$.  Then II plays $V_k := f(V'_k)$ in $\@G(Y)$, which is allowed since $y_k = f(x_k) \in f(V'_k) \subseteq f(U'_k) \subseteq U_k$.  Since II wins $\@G(X)$, there is some $x \in X$ with neighborhood basis $\{V'_k\}_k$.  Then $\{V_k\}_k$ is a neighborhood basis of $y := f(x)$: clearly $y \in V_k = f(V'_k)$ for every $k$; and for any open neighborhood $W \ni y$, we have $x \in f^{-1}(W)$, whence there is some $V'_k \subseteq f^{-1}(W)$, whence $V_k = f(V'_k) \subseteq W$.
\end{proof}

We now have the following characterization of quasi-Polish spaces:

\begin{theorem}[{\cite[Theorem~51]{deB}}]
\label{thm:qpol-cscs}
A topological space $X$ is quasi-Polish iff it is $T_0$, second-countable, and convergent strong Choquet.
\end{theorem}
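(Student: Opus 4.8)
The statement splits into two directions; the forward one is routine, and all the content is in the converse.

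\emph{Forward direction.} A quasi‑Polish space is $T_0$ and second‑countable by definition, and it is convergent strong Choquet for soft reasons: the empty space is trivially so (player II wins as I cannot move), while a nonempty quasi‑Polish space is a continuous open image of $\#N^\#N$ by \cref{thm:qpol-opensurj-baire}, and $\#N^\#N$ is convergent strong Choquet by \cref{thm:cmet-cscs}, so its image is too by \cref{thm:cscs-opensurj}. (Alternatively: $\#S$ is convergent strong Choquet --- II wins by playing $V_n := U_n$, since a descending chain of nonempty open sets of $\#S$ stabilizes and is a neighborhood basis of a point --- hence $\#S^\#N$ is by \cref{thm:cscs-prod}, and every $\*\Pi^0_2$ subspace is by \cref{thm:cscs-pi02}.)

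\emph{Converse.} Suppose $X$ is $T_0$, second‑countable, and convergent strong Choquet; fix a winning strategy $\sigma$ for II in $\@G(X)$ and a countable basis $\@U$ of $X$ closed under finite intersections. The plan is to manufacture a countable basic posite $(\@U, |>)$ for $X$ in the sense of \cref{sec:posite}; via $e_\@U$ this exhibits $X$ as a $\*\Pi^0_2$ subspace of $\#S^\@U$, hence as quasi‑Polish. The covering relations are read off from $\sigma$: to each finite sequence $\bar U = (U_0, \dots, U_n)$ of basic opens that can occur as the list of I's open‑set moves in a legal play of $\@G(X)$ in which II follows $\sigma$ --- and there are only countably many such, since $\@U$ is countable --- I would attach the cover of $U_n$ consisting of those basic $V \subseteq U_n$ refining a $\sigma$‑response reachable after $\bar U$, and then close this family of covers under intersection with smaller basic opens so as to satisfy the refinement axiom \eqref{eq:posite-stable}. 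These covers hold in $X$ (the union condition \eqref{eq:posite-subcan}): to reach a prescribed $y \in U_n$, player I may replay to $\bar U$ and then move the point $y$ itself, which forces II's response --- hence the cover --- to contain $y$. The same device shows each $e_\@U(x)$ is a $|>$‑prime filter, so $e_\@U(X) \subseteq \PFilt(\@U, |>)$.

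The crux is the reverse inclusion, that every $|>$‑prime filter $\@A$ is of the form $e_\@U(x)$. Here one uses primeness together with the refinement closure to extract from $\@A$ a descending chain $X = U_0 \supseteq U_1 \supseteq \cdots$ of members of $\@A$, cofinal in $\@A$, realizing a legal strategic play at each finite stage; since $\sigma$ is winning these plays converge, and one then locates a point $x$ for which $\{U_n\}$ is a neighborhood basis, so $\@A = e_\@U(x)$. \textbf{The main obstacle --- the technical heart of the proof --- is that player I may move arbitrary points of $X$,} so a priori the finite strategic plays witnessing successive stages need not fit together into one infinite play, and there is no finite branching with which to invoke König's lemma. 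This is overcome by arranging the covering relations finely enough that the chain can be built hand in hand with a single growing strategic play, so that coherence along it is automatic; failing that, one locates the limit point directly by a Baire‑category argument inside $X$ --- legitimate since $X$ is Baire, being strong Choquet (\cref{thm:cscs-scs}) --- in the spirit of the proofs of \cref{thm:posite-ctb-pit} and \cref{thm:qpol-sober}.
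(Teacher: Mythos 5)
The forward direction is fine and matches the paper. The converse, however, has a genuine gap, and it sits exactly where you flag "the main obstacle": you name the problem (finite strategic plays witnessing successive stages need not cohere into one infinite play, because II's responses under $\sigma$ depend on the points I moves, not just the open sets) but you do not actually solve it. Your first proposed fix --- "arranging the covering relations finely enough that ... coherence along it is automatic" --- is precisely the assertion that requires proof. The paper's proof supplies the missing ingredient as \cref{thm:cscs-trace}: one first modifies $\sigma$, via a well-ordering of $X$ and a "least witness" bookkeeping, into a winning strategy $\tau$ that plays basic open sets and whose $n$th move depends only on the open sets in I's previous moves. Only with such a point-independent strategy are your covers (indexed by sequences $\bar U$ of I's open moves) even well defined, and only then does a descending chain of opens, each extendible to a finite $\tau$-play, assemble into the open-set trace of a single infinite $\tau$-play whose winning condition yields the limit point. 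Without this lemma, two finite plays reaching the same open sets via different points can force incompatible continuations, and your construction of the posite and of the point realizing a prime filter both break down.

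Your fallback --- "locate the limit point directly by a Baire-category argument" --- does not repair this. Baire category (via \cref{thm:cscs-scs}) can produce a point in the intersection of the chain, but the winning condition of the \emph{convergent} strong Choquet game is stronger than nonempty intersection: you need the chain to be a neighborhood \emph{basis} of the point, i.e., convergence, and that is available only for chains that actually arise as runs of the game against the winning strategy. So the appeal to \cref{thm:posite-ctb-pit}-style density arguments cannot substitute for realizing the chain as a genuine play. (Separately, your route differs from the paper's even after the gap is filled: the paper does not build a posite but instead constructs a tree $T$ of $\tau$-plays and a continuous open surjection $[T] \to X$, concluding via \cref{thm:qpol-opensurj}; your posite formulation could likely be made to work, but it would still need \cref{thm:cscs-trace} or an equivalent as its engine.)
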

\begin{proof}
$\Longrightarrow$ follows from \cref{thm:cscs-pi02,thm:cscs-prod} and the obvious fact that $\#S$ is convergent strong Choquet (or alternatively \cref{thm:cmet-cscs,thm:qpol-opensurj-baire,thm:cscs-opensurj}).

$\Longleftarrow$:
Let $\tau$ be a winning strategy for II, which we regard as the (uncountably branching) tree of all finite initial runs of the game $(U_0, x_0), V_0, (U_1, x_1), V_1, \dotsc$ where II follows the strategy; for each such run, we define by convention $V_{-1} := X$.
Let $\@U$ be a countable basis of open sets in $X$.
We may find a countably branching subtree $T \subseteq \tau$ such that
\begin{enumerate}
\item[(i)]  I only plays basic open sets in $\@U$; and
\item[(ii)]  for each run of even length in $T$, ending in $V_n$, and for each $V_n \supseteq U_{n+1} \in \@U$, $U_{n+1}$ is the union of all $V_{n+1}$ such that for some $x_{n+1}$, the extension of the run by $(U_{n+1}, x_{n+1}), V_{n+1}$ is in $T$.
\end{enumerate}
To define $T$, start by including the empty run.
Inductively after a run ending in $V_n$, for each $V_n \supseteq U_{n+1} \in \@U$, since I may play $(U_{n+1}, x_{n+1})$ for any $x_{n+1} \in V_n$, the set of all of II's responses $V_{n+1}$ to all such moves, according to $\tau$, form an open cover of $U_{n+1}$; include in $T$ countably many such extensions of the run by $(U_{n+1}, x_{n+1}), V_{n+1}$ corresponding to a countable subcover of $U_{n+1}$.

Now let as usual $[T]$ be the Polish space of infinite branches through $T$.
For each $t = ((U_0, x_0), V_0, (U_1, x_1), V_1, \dotsc) \in [T]$, since II wins, $U_0 \supseteq V_0 \supseteq U_1 \supseteq V_1 \supseteq \dotsb$ forms a neighborhood basis for some $f(t) \in X$, which is unique because $X$ is $T_0$.
So we have defined a function
\begin{equation*}
f : [T] --> X.
\end{equation*}
Clearly $f$ is continuous: for any $t = ((U_0, x_0), V_0, (U_1, x_1), V_1, \dotsc) \in [T]$ and basic neighborhood $V_n$ of $f(t)$, the basic neighborhood
\begin{align*}
N_{(U_0, x_0), V_0, \dotsc, V_n} := \{t' \in [T] \mid t' \text{ extends } (U_0, x_0), V_0, \dotsc, V_n\}
\end{align*}
of $t$ maps into $V_n$.
We now claim that
\begin{itemize}
\item[($*$)]  for any $((U_0, x_0), V_0, \dotsc, V_n) \in T$ (including $n = -1$), we have $f(N_{(U_0, x_0), V_0, \dotsc, V_n}) = V_n$.
\end{itemize}
$\subseteq$ is clear from the definition of $f$.
Conversely, for any $x \in V_n$, we may inductively find a branch $t = ((U_0, x_0), V_0, \dotsc, V_n, (U_{n+1}, x_{n+1}), V_{n+1}, \dotsc) \in N_{(U_0, x_0), V_0, \dotsc, V_n}$, where the $U_{n+1} \supseteq U_{n+2} \supseteq \dotsb$ are a neighborhood basis of $x$ (whence $f(t) = x$), and the $x_{n+1}, x_{n+2}, \dotsc$ are chosen using (ii) above so that $U_{n+1} \supseteq V_{n+1} \ni x$, $U_{n+2} \supseteq V_{n+2} \ni x$, \ldots.
This shows that $f$ is open, as well as surjective (by taking $n = -1$ in ($*$)), whence $X$ is quasi-Polish by \cref{thm:qpol-opensurj}.
\end{proof}

\bigskip\noindent
Department of Mathematics \\
University of Illinois at Urbana--Champaign \\
Urbana, IL 61801 \\
\medskip
\nolinkurl{ruiyuan@illinois.edu}

\end{document}